\newtheorem{theorem}{Theorem}[section] 
\newtheorem{claim}[theorem]{Claim}
\newtheorem{mc}[theorem]{Main Claim}
\theoremstyle{definition}
\newtheorem{definition}[theorem]{Definition}
\newtheorem{conjecture}[theorem]{Conjecture}
\newtheorem{discussion}[theorem]{Discussion}
\newtheorem{convention}[theorem]{Convention}
\newtheorem{observation}[theorem]{Observation} 
\newtheorem{hypothesis}[theorem]{Hypothesis}
\theoremstyle{remark}
\newtheorem{question}[theorem]{Question}
\newtheorem{remark}[theorem]{Remark}
\newcommand{\rest}{{\restriction}}
\newcommand{\tp}{{\rm tp}} 
\newcommand{\Th}{{\rm Th}} 
\newcommand{\pot}{{\rm pot}}
\newcommand{\com}{{\rm com}}
\newcommand{\sat}{{\rm sat}}
\newcommand{\AP}{{\rm AP}}
\newcommand{\PA}{{\rm PA}}
\newcommand{\wilog}{{\rm without loss of generality}}
\newcommand{\Wilog}{{\rm Without loss of generality}}
\newcommand{\then}{{\underline{then}}}
\newcommand{\when}{{\underline{when}}}
\newcommand{\Then}{{\underline{Then}}}
\newcommand{\Iff}{{\underline{iff}}}
\newcommand{\mn}{{\medskip\noindent}}
\newcommand{\sn}{{\smallskip\noindent}}
\newcommand{\bbZ}{{\mathbb Z}}
\newcommand{\bbL}{{\mathbb L}}
\newcommand{\bbN}{{\mathbb N}}
\newcommand{\bbP}{{\mathbb P}}
\newcommand{\bbQ}{{\mathbb Q}}
\newcommand{\bbR}{{\mathbb R}}
\newcommand{\cS}{{\mathscr S}}
\newcommand{\cf}{{\rm cf}}
\def\mathunderaccent#1#2 {\let\theaccent#1\skewfactor#2
\mathpalette\putaccentunder}
\def\putaccentunder#1#2{\oalign{$#1#2$\crcr\hidewidth
\vbox to.2ex{\hbox{$#1\skew\skewfactor\theaccent{}$}\vss}\hidewidth}}
\newenvironment{PROOF}[2][\proofname.]
   {\begin{proof}[#1]}
   {\end{proof}}
\begin{document}

\title[Models of PA, etc.]
{Models of PA: when two elements are necessarily order automorphic}
\author {Saharon Shelah}
\address{Einstein Institute of Mathematics\\
Edmond J. Safra Campus, Givat Ram\\
The Hebrew University of Jerusalem\\
Jerusalem, 91904, Israel\\
 and \\
 Department of Mathematics\\
 Hill Center - Busch Campus \\ 
 Rutgers, The State University of New Jersey \\
 110 Frelinghuysen Road \\
 Piscataway, NJ 08854-8019 USA}
\email{shelah@math.huji.ac.il}
\urladdr{http://shelah.logic.at}
\thanks{This research was supported by the Israel Science 
Foundation. Publication 924.\\
I would like to thank Alice Leonhardt for the beautiful 
typing.}

\subjclass[2010]{Primary 03C62, 03C50; Secondary: 03C30, 03C55}

\keywords {model theory, Peano arithmetic, linear order, automorphisms}

\date{May 22, 2012}

\begin{abstract}
We are interested in the question of how much the order of a
non-standard model of PA can determine the model.  In particular, for a
model $M$, we want to characterize the complete types $p(x,y)$ of non-standard
elements $(a,b)$ such that the linear orders $\{x:x < a\}$ and $\{x:x
< b\}$ are necessarily isomorphic.  It is proved that this set includes the
complete types $p(x,y)$ such that if the pair $(a,b)$ realizes it (in
$M$) then there is an element $c$ such that for all
standard $n,c^n < a,c^n < b,a < bc$ and $b < ac$.  We prove that this
is optimal, because if $\diamondsuit_{\aleph_1}$ holds, then there is $M$
of cardinality $\aleph_1$ for which we get equality.  We also deal
with how much the order in a model of PA may determine the addition.
\end{abstract}

\maketitle
\numberwithin{equation}{section}
\setcounter{section}{-1}

\section {Introduction} 

Let $M$ be a model of Peano Arithmetic (PA).  For an $a \in M$, by
$M_{<a}$ we denote the set $\{c \in M:M \models c < a\}$ with the
inherited linear order.  For any
pair $(a,b)$ of non-standard elements of $M$, let $(*)_{M,a,b}$ be the
condition dfeined by
\mn
\begin{enumerate}
\item[$(*)_{M,a,b}$]  $(M_{<a},<_M) \cong (M_{<b},<_M)$.
\end{enumerate}
\mn
We will also consider
\mn
\begin{enumerate}
\item[$(*)^{\pot}_{M,a,b}$]  for every $N$, if $M \prec N$, then
  $(*)_{N,a,b}$
\sn
\item[$(*)^{\tp}_{M,a,b}$]  for every model $N$, if $M \equiv N$ and
for some $M_0 \prec N$ we have $\{a,b\} \subseteq M_0 \prec M$, then
$(*)_{N,a,b}$.
\end{enumerate}
\mn
Looking recently at models of PA, we wonder:
\begin{question}
\label{h.1}
What is the set of complete types $p(x,y)$ such that: if the pair
$(a,b)$ realizes the type $p(x,y)$ then $(*)_{M,a,b}$ holds?  
Another variant is, given a model $M$ and $a,b \in M$ when
do we have $(*)^{\pot}_{M,a,b}$ or just $(*)^{\tp}_{M,a,b}$?

Our main aim is sorting this out.  For the problem as stated, 
on the one hand we give a sufficient condition, and on the other hand, 
for $(*)^{\tp}_{M,a,b}$, we prove its necessity,
assuming $\diamondsuit_{\aleph_1}$.  However, we may consider a relative:
\end{question}

\begin{question}
\label{h.2}
Like question \ref{h.1}, but we restrict ourselves to $\aleph_1$-saturated
models.  
\end{question}

It seems natural to ask:
\begin{question}
\label{h.3}  
Generally, how much does the linear order of a non-standard
model $M$ of PA determine $M$?  Is there a non-standard model $M$ of
PA such that for every model $N$ of PA, if $(M \rest \{<\}) \cong (N,
\rest \{<\})$, \then \, are $M \cong N$? 
\end{question}

We discussed those problems with Gregory Cherlin and he asked:
\begin{question}
\label{h.5}
[Cherlin] Show that $\{M \rest \{<\}:M \models \PA\}$ is complicated.

This question is too vague for my taste.
\end{question}

\noindent
Recall the much earlier problem 14 from Kossak-Schmerl \cite{KoSc06}
asked by Friedman:
\begin{question}
\label{h.7}
Is there a model of $\PA$ such that for every model $N$ of $\PA$, if
$(M \rest \{<\}) \cong (N, \rest \{<\})$, then $M \equiv N$?

This seems of different character, speaking just of the theory of
another model, but of course, a positive answer to Question \ref{h.3}
would also give an answer to Question \ref{h.7}.
\end{question}

\noindent
We may go half way: maybe the linear order of $M$ does not determine
$M$, say up to isomorphism, but just the additive structure (from
which the order is definable).  This means
\begin{question}
\label{h.9}  
How much the order of $M$, a non-standard model of 
$T,T \in \text{ PA}^{\text{com}}$, determines the isomorphism 
type of $(|M|,<_M,+_M)$?
\end{question}

\noindent
A more general version of our question is
\begin{question}
\label{z10}
Can we construct a non-standard model $M$ of PA with few order
automorphisms in some sense?
\end{question}

Recall that for any countable non-standard model $M$ of $\PA$, it is
recursively saturated hence has ``lots" of order automorphisms (see
\cite{KoSc06}).  
Much has been done on other classes.  Concerning Abelian groups and
modules, see the book G\"obel-Trlifaj \cite{GbTl05}.  For general first
order see \cite{Sh:384} and history in both.  In particular, there are
non-standard models of PA with no automorphism, 
this motivating the ``order-automorphism"
in \ref{z10}.  Now answer to \ref{h.1} sheds some light on \ref{h.9}.

Let us review the present work.
First, in \S1 we introduce and deal with some relevant equivalence relations,
and in \ref{a13} it is proved that the so called $a E^2_M b$ implies
$(*)_{M,a,b}$ so $a E^2_M b$ is a sufficient condition for a
 positive answer to Question \ref{h.1}(1), while for the
so called 2-order rigid models $M$, we prove that the
isomorphism type of $M \rest \{<\}$ determines that of $M \rest \{<,+\}$
but only up to almost isomorphism, shedding some light on \ref{h.9}.

Second, in \S2 we get that even $a E^3_M b$ implies $(*)_{M,a,b}$.
This shows that \ref{a13} is not so interesting but its proof is a
warm-up for \S2.  Moreover this is only part of the picture, see \S4.  In \S2
we also show that if $M$ is 3-o.r. then $M \rest \{<,+\}$ is unique up
to almost isomorphism.

In \S3 we show that $E^3_M$ is the right notion as if
$\diamondsuit_{\aleph_1}$ holds then every countable model of $\PA$ 
has elementary extension $M$ of cardinality $\aleph_1$ such that for $a,b \in M
\backslash \bbN$ we have $a E^3_M b \Leftrightarrow M_{< a} \cong
M_{<b}$.  We comment there on the case $\neg a E^4_M b$.

Naturally, for most results some weaker version of PA suffices.  We
comment on this in \S4; so usually when a result supercedes an
earlier one, normally it has a harder proof and really use more axioms
of PA.  

We thank the referee for doing much to improve the presentation,
considerably more than the call of duty and Shimoni Garti for help in
proofreading. 

\begin{convention}
\label{z15}
Models are models of PA, even ordinary ones if not said otherwise
where $M$ is ordinary when $\bbN \subseteq M$.  It is used e.g. in
Definition \ref{a5}(b),(c).  We may circumvent it, defining $a +_M n$
by repeated addition of $1_M$ and $a \times_M n$ by repeated additions of
$a$, but it seems to me less convenient.
\end{convention}
\newpage

\section {Somewhat rigid order}

We define (in \ref{a5}) some equivalence relations $E^\ell_M$ for
models $M$ (of PA).  We shall deal with their basic properties in
\ref{a7}, \ref{a21}, the relations between them (in \ref{b3}),
cofinalities of equivalence classes (in \ref{a19}, \ref{b11}), on order 
isomorphism/almost $\{<,+\}$-isomorphism in \ref{a3},
\ref{a9}, \ref{b9}; including noting for $\ell$-o.r. models that
$E^\ell_M \subseteq E^5_M$.  

Lastly, we prove versions of ``if $M_1 \rest \{<\},M_2 \rest \{<\}$ are
 isomorphic then $M_1 \rest \{<,+\},M_2 \rest \{<,+\}$ are almost isomorphic",
see Theorems \ref{a13}, \ref{b13}.
\bigskip

\begin{definition}
\label{a3}  
We say $M,N$ are almost $\{<,+\}$-isomorphic \when \,: some $f$ is an
almost $\{<,+\}$-isomorphism from $M$ onto $N$ which means $f$ is an 
isomorphism  from $M \rest \{<\}$ onto $N \rest \{<\}$ such that
for all $a,b \in M$ there is an $n \in \bbN$ such that the distance
between $f(a +_M b)$ and $f(a) +_N f(b)$ is $n$.
\end{definition}

\begin{definition}
\label{a5}  
We define the following equivalence relations on $M \backslash \bbN$
\mn
\begin{enumerate}
\item[$(a)$]   $E^0_M:a E^0_M b$ \Iff \,  $\bigvee\limits_{n \in \bbN} 
(a <_M b + n \wedge b <_M a +n)$
\sn
\item[$(b)$]   $E^1_M:a E^1_M b$ \Iff \, for some $c  \in M$ satisfying
$n \in \bbN \Rightarrow nc <_M a,b$ we have $a < b+c$ and $b < a+c$
\sn
\item[$(c)$]   $E^2_M:a E^2_M b$ \Iff \,  $\bigvee\limits_{n \in \bbN} 
(a <_M b \times_M n \wedge b <_M a \times_M n)$
\sn
\item[$(d)$]   $E^3_M:a E^3_M b$ \Iff \, for some $c \in M$ we have

$\bigwedge\limits_{n \in \bbN}[c^n <_M a \wedge c^n <_M b$ and 
$a < b \times_M c \wedge b < a \times_M c]$
\sn
\item[$(e)$]   $E^4_M:a E^4_M b$ \Iff \, $\bigvee\limits_{n \in \bbN}
(a < b^n \wedge b < a^n)$
\sn
\item[$(f)$]   $E^5_M:a E^5_M b$ \Iff \,  some
order-automorphism of $M$ maps $a$ to $b$
\sn
\item[$(g)$]   $E^6_M:a E^6_M b$ \Iff \, $a,b$ are
equivalent for the minimal convex
equivalent relations on $M$ which $E^5_M$ refines \when
\sn
\item[$(h)$]  an equivalence relation $E$ on a model $M =
  (|M|,<^M,\ldots)$ is convex \when \, $ a <_M b <_M c,a Ec$ implies
  $a Eb$.
\end{enumerate}
\end{definition}

\begin{definition}
\label{a6}
For $\ell \in \{0,1,\dotsc,6\},M$ is called $\ell$-o.r. (for
order-rigid) if for all $a,b \in M,(M_{<a},<_M) \cong (M_{<b},<_M)$
implies that $a E^\ell_M b$.
\end{definition}

\begin{discussion}
While we know that there are rigid linear orders and we know that
there are rigid models of $\PA$, it is harder to build $\ell$-.o.r. models
of $\PA$, our relevant result will be Theorem \ref{d35}.
\end{discussion}

\begin{claim}
\label{a7}  
1) For $\ell \in \{0,\dotsc,6\}$
 the two place relation $E^\ell_M$ is an equivalence relation on 
$M \backslash \bbN$ and is convex except possibly for
 $\ell=5$.

\noindent
1A) Moreover, if $a E^3_M b$ \then \, for every $c \in M$ we have $M
\models (\bigwedge\limits_{n} c^n < a) \Leftrightarrow M \models
(\bigwedge\limits_{n} c^n < b)$ and the set of such $c$'s is closed
under products and sums and is convex.

\noindent
1B)  Also if $a E^1_M b$ \then \, for every $c \in M$
we have $M \models ``(\bigwedge\limits_{n} c \times n < a)
\Leftrightarrow (\bigwedge\limits_{n} (c \times n < b)"$ and the set of
such $c$'s is closed under sums and is convex.

\noindent
2) If $a,b \in M \backslash \Bbb N$ and 
$a E^2_M b$ \then \,  $(M_{<a},<_M) \cong (M_{<b},<_M)$;
moreover there is an automorphism of 
$M \rest \{<\}$ mapping $a$ to $b$, that is, $a E^5_M b$.

\noindent
3) Assume $a,b \in M \backslash \bbN$ \then \, $(M_{<a},<_M) 
\cong (M_{<b},<_M)$ iff $a E^5_M b$.

\noindent
4) $a E^6_M b$ iff there is $c \le_M \text{\rm min}\{a,b\}$ and an
order-automorphism $f$ of $M$ such that {\rm max}$\{a,b\} \le_M f(c)$
\Iff \, this holds for $c = \text{\rm min}\{a,b\}$.
\end{claim}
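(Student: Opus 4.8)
The plan is to dispatch (1), (1A), (1B) by direct computation, and to route (2)--(4) through the standard description of the order type of a model of $\PA$.

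\emph{Parts (1), (1A), (1B).} For $\ell\le 4$, reflexivity and symmetry of $E^\ell_M$ are immediate from the defining formulas (each element of $M\backslash\bbN$ exceeds $1$, which handles the trivial witnesses). Transitivity is a short calculation: given witnesses $c$ for $a\,E^\ell_M\,b$ and $c'$ for $b\,E^\ell_M\,d$, one takes $c+c'$ for $\ell\in\{0,1\}$, $c\cdot c'$ for $\ell=3$, and simply multiplies or exponentiates the standard bounds for $\ell\in\{2,4\}$; the only point to check is that $2n\in\bbN$ and that the defining inequalities bound each witness, below the relevant threshold, by a standard multiple (resp.\ power) of the other, so the ``$\forall n\in\bbN$'' clauses persist. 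Convexity for $\ell\le 4$: if $a<_M b<_M c$ and $a\,E^\ell_M\,c$ with witness $w$, then the same $w$ witnesses $a\,E^\ell_M\,b$, using $a<_M b$ and $w>0$. For $\ell=5$, $E^5_M$ is the orbit equivalence of $\mathrm{Aut}(M\rest\{<\})$, hence an equivalence relation. For $\ell=6$, the convex equivalence relations on $M\backslash\bbN$ refining $E^5_M$ form a nonempty family (the one-class relation belongs to it) closed under intersection, and an intersection of convex relations is convex; so $E^6_M$ exists and is convex. For (1A): if $a\,E^3_M\,b$ with witness $d$, $\bigwedge_n d_1^n<_M a$, and $d_1^{n_0}\ge_M b$ for some $n_0$, then $a<_M bd\le_M d_1^{n_0}d$, which compared with $d^{n_0+1}<_M a$ forces first $d<_M d_1$ and then $a<_M d_1^{n_0+1}<_M a$, absurd; hence $\bigwedge_n d_1^n<_M a\Leftrightarrow\bigwedge_n d_1^n<_M b$ (using symmetry of $E^3_M$). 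The set $\{d_1:\bigwedge_n d_1^n<_M a\}$ is downward closed, hence convex, and closed under products and sums because $(d_1d_2)^n$ and $(d_1+d_2)^n$ are dominated by a power of $\max\{2,d_1,d_2\}$, which remains in the set. Part (1B) is the same with $\times n$ in place of $(\cdot)^n$, giving closure under sums but not products.

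\emph{Parts (2), (3).} I use that $\bbN$ is exactly the set of $x\in M$ with finitely many $<_M$-predecessors, that each $E^0_M$-class of a nonstandard element is a copy of $\bbZ$, and that $L:=(M\backslash\bbN)/E^0_M$ is a dense linear order without endpoints, so $(M,<_M)\cong\omega+\bbZ\cdot L$ (write $[x]$ for the $E^0_M$-class of $x$). Two remarks. (i) An order-automorphism of $M$ fixes $\bbN$ pointwise and respects $E^0_M$, hence induces an automorphism of $L$; conversely every $\sigma\in\mathrm{Aut}(L)$ lifts to an order-automorphism of $M$, and the within-block shift over the block $\sigma([a])$ can be chosen to send a prescribed $a$ to any element of that block. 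So $a\,E^5_M\,b$ iff some $\sigma\in\mathrm{Aut}(L)$ sends $[a]$ to $[b]$. (ii) Truncated subtraction gives an order-isomorphism $L_{>[a]}\cong L$ for \emph{every} $a$. Now (2): assuming $a\,E^2_M\,b$, say $a\le_M b<_M a\times n$ with $n\in\bbN$, the map $\psi([x]):=[\lfloor bx/a\rfloor]$ is well defined on $L$ (since $|x-x'|\in\bbN$ bounds $|\lfloor bx/a\rfloor-\lfloor bx'/a\rfloor|$ by a standard number), order preserving and injective (because $b\ge_M a$), and surjective (the preimage of $[z]$ is $[\lceil az/b\rceil]$ up to a standard error), so $\psi\in\mathrm{Aut}(L)$ and $\psi([a])=[b]$; lifting $\psi$ gives the required automorphism. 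For (3): the direction $a\,E^5_M\,b\Rightarrow(M_{<a},<)\cong(M_{<b},<)$ is just restriction of an automorphism; for the converse, $M_{<a}\cong\omega+\bbZ\cdot L_{<[a]}+\omega^{*}$, and from an isomorphism with $M_{<b}$ one peels off the leading $\omega$ (elements with finitely many predecessors), then the trailing $\omega^{*}$ (elements with finitely many successors — here one needs $L_{<[a]}$ dense with no maximum), then the $\bbZ$-blocks (classes of ``finitely many elements strictly in between''), to obtain $L_{<[a]}\cong L_{<[b]}$; gluing with $L_{>[a]}\cong L\cong L_{>[b]}$ produces $\sigma\in\mathrm{Aut}(L)$ with $\sigma([a])=[b]$, which lifts.

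\emph{Part (4).} Let $P(a,b)$ abbreviate the displayed condition; note its witness $c$ is automatically nonstandard, since $f(c)$ is. If $P(a,b)$ holds, $c\le_M f(c)$ lie in a common $E^5_M$-class and $a,b$ lie in the interval $[c,f(c)]$, hence in the convex hull of that class, which is contained in a single $E^6_M$-class (as $E^6_M\supseteq E^5_M$ and $E^6_M$ is convex); so $P\subseteq E^6_M$. Conversely $P$ is a convex equivalence relation refining $E^5_M$: reflexivity and symmetry are clear; transitivity follows by composing the witnessing automorphisms and keeping the smaller base point, using order-preservation; convexity holds since a witness for $P(a,b)$ also witnesses $P(a,b')$ for $b'$ between $a$ and $b$; and an automorphism sending $a$ to $b$ gives $P(a,b)$ via $c=\min\{a,b\}$. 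By minimality of $E^6_M$ we get $E^6_M\subseteq P$, hence $E^6_M=P$. Finally, if $c\le_M\min\{a,b\}$ and $f(c)\ge_M\max\{a,b\}$, then $f(\min\{a,b\})\ge_M f(c)\ge_M\max\{a,b\}$ as $f$ preserves order, so the witness may always be taken to be $c=\min\{a,b\}$ — this is the last ``iff''.

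\emph{The main obstacle} lies in (2)--(3): what is needed is an order-automorphism of all of $M$, not merely an order-embedding or an isomorphism between initial segments. The device that makes this work is passing to the $E^0_M$-quotient $L$, in which the a priori non-surjective ``rescale by $b/a$'' map becomes a genuine automorphism because division with bounded remainder is invertible modulo $E^0_M$; verifying $(M,<_M)\cong\omega+\bbZ\cdot L$ and that automorphisms lift with a prescribed value at one point is the technical heart, while the order-type peeling in (3) — above all isolating the trailing $\omega^{*}$ — is exactly where density and the absence of endpoints in $L$ are used. Parts (1), (1A), (1B), (4) are formal by comparison.
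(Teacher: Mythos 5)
Your argument is correct, and for parts (1), (1A), (1B), (4) it runs essentially as the paper's does: transitivity of each $E^\ell_M$ via a sum or product of witnesses (with convexity supplying the WLOG reorderings), a direct verification of (1A)/(1B), and for (4) sandwiching the explicit relation $P$ between $E^5_M$ and every convex equivalence relation coarsening $E^5_M$. Where you genuinely diverge is (2) and (3): you pass explicitly to the quotient $L=(M\setminus\bbN)/E^0_M$ and the decomposition $(M,<_M)\cong\omega+\bbZ\cdot L$. For (2) this is close to what the paper does --- it too fixes a transversal of $M/E^0_M$ --- though your rescaling $[x]\mapsto[\lfloor bx/a\rfloor]$ is a different (arguably cleaner) map than the paper's piecewise affine one, which is the identity up to a breakpoint $c$ with $(n-1)c=na-b$ and $x\mapsto n(x-a)+b$ beyond it. For (3), however, the paper has a much shorter device that your proposal misses: given an isomorphism $f\colon(M_{<a},<_M)\cong(M_{<b},<_M)$, extend it to a global map $g$ by $g(c)=f(c)$ for $c<_M a$ and $g(c)=b+_M(c-_M a)$ for $a\le_M c$. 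This is instantly a bijection of $M$, is order-preserving across the cut at $a$ (since $f$ lands in $M_{<b}$ while the translated piece lands in $M_{\ge b}$), and sends $a$ to $b$ --- no peeling of the leading $\omega$, trailing $\omega^*$, and $\bbZ$-blocks, no appeal to density or endpoints of $L$, and no gluing with $L_{>[a]}\cong L\cong L_{>[b]}$. Your quotient picture buys conceptual clarity and makes (2) transparent, but for (3) it is a detour; the extension-by-translation trick is worth internalizing, as the same move recurs later in the paper (for instance in the proof of \ref{b3}(4)).
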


\begin{remark}
Note that in \S2 we establish a stronger version of \ref{a7}(2), see
\ref{b9} but the proof of
\ref{b9} uses \ref{a7}(2), also the proof of \ref{a7}(2) applies to
weaker versions of PA than the proof of \ref{b9}, see \S4.
\end{remark}

\begin{PROOF}{\ref{a7}}
1) Let $\ell=3$.  If $a_1 E^3_M a_2$ and let $c$ witness it, then $c$ witnesses
also $a_2 E_M^3 a_2$ and $a_2 E_M^3 a_1$, so reflexivity and symmetry
holds.

Lastly, assume $M \models ``a_1 < a_2 < a_3"$; if $a_k E_M^3 a_{k+1}$
and let $c_k$ witness this for $k=1,2$, then the product $c_1 c_2$ witness $a_1
E^3_M a_3$ by part (1A) proved below 
and if $a_1 E^3_M a_3$ then the same witness
gives $a_1 E^3_M a_2 \wedge a_2 E^3_M a_3$ so transitivity and
convexity holds.  

For $\ell=1$ the proof is similar (using part (1B)
instead of part (1A)), also for $\ell=0,2,4$ the proof is even easier
and for $\ell=5,6$ it holds by the definition.

\noindent
1A), 1B)  Check.

\noindent
2) Without loss of generality, assume that $a < (n-1) a<b<na$, where
$1 \le n \in \bbN$, and also there is a $c$ such that
  $(n-1)c=na-b$.  Then $c < a$ because otherwise $(n-1)a \le (n-1)c =
na-b$ hence $b \le a$, contradiction.  Let $X$ be a set of
   representatives for $M/E_0$ and, without loss of generality, assume
   $a,c \in X$.  Now define $f:M \rightarrow M$ by first defining it
   on $X$ and then extending it to all of $M$ in the obvious way.
   (The obvious way is: if $y=x+k$, where $x \in X$ and $k \in \bbZ$,
   then $f(y) = f(x)+k$ and $f$ is the identity on $\bbN$.)  
If $x \in X$, then let

\[
f(x) = \begin{cases} x &\text{ if } x \le c, \\
n(x-a)+b &\text{ otherwise}. \end{cases}
\]

\mn
Clearly, $f(c) = c$ and $f(a)=b$.  Now check that $f$ is as required.

\noindent
3) First, if $f$ exemplifies $a E^5_M b$, i.e. is an automorphism of
   $M$ mapping $a$ to $b$ then $f \rest M_{< a}$ is an isomorphism
   from $(M_{<a},<_M)$ onto $(M_{<b},<_M)$.  Second, if $f$ is an
   isomorphism from $(M_{<a},<_M)$, onto $(M_{<b},<_M)$ we define a
   function $g:M \rightarrow M$ by: $g(c)$ is $f(c)$ if $c <_M a$ and
   is $b + (c-a)$ if $a \le_M c$.  Now check.

\noindent
4) Let $E'_M = \{(a,b)$: for some order-automorphism $f$ of $M$ we have
   $f(\text{min}\{a,b\}) \ge \text{ max}\{a,b\}\}$; clearly this is
   symmetric (by definition), reflexive (using $f =$ the identity) and
   as $f$ is monotonic also convex (i.e. $a \le a_1 \le b_1 \le b 
\wedge a E'_M b \Rightarrow a_1 E'_M b_1$).  
To prove transitivity it is now enough to show $a_1 < a_2 < a_3 
\wedge a_1 E'_M a_2 \wedge a_2 E'_M a_3
\Rightarrow a_1 E'_M a_3$ which hold by composing the automorphisms
$f_1,f_2$ witnessing $a_1 E'_M a_2,a_2 E'_M a_3$ respectively.  
So $E'_M$ is a convex equivalence relation and
obviously $a E^5_M b \Rightarrow a E'_M b$.

Lastly, $E'_M$ is refined by any convex equivalence relation refining
$E^5_M$, so it follows that $E^6_M = E'_M$ so we are done.
\end{PROOF}

\begin{claim}
\label{a9}
1) If $f$ is an order-isomorphism from $M_1$ onto $M_2$ \then \, $f$
   maps $E^0_{M_1}$ onto $E^0_{M_2}$.

\noindent
2) If $f$ is an almost $\{<,+\}$-isomorphism from $M_1$ onto $M_2$
   \then \, $f$ maps $E^1_{M_1}$ onto $E^1_{M_2}$ and $E^2_{M_1}$ onto
   $E^2_{M_2}$.

\noindent
3) Similarly for embeddings (not used).
\end{claim}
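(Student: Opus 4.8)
The plan is to use one structural observation per relation. The relation $E^0_M$ is definable from the order alone, whereas $E^1_M$ and $E^2_M$ are defined from additive and multiplicative data that an almost $\{<,+\}$-isomorphism respects only up to a standard-finite error; that error will be harmless because the relevant witnesses are nonstandard. Two preliminaries will be used throughout. First, an order-isomorphism $f\colon M_1\to M_2$ maps $\bbN$ onto $\bbN$ (since $n\in\bbN$ iff $\{x:x<n\}$ is finite, an order-invariant condition), hence maps $M_1\setminus\bbN$ onto $M_2\setminus\bbN$, and it preserves standard distances, i.e.\ $M_1\models|x-y|=m\Leftrightarrow M_2\models|f(x)-f(y)|=m$ for every $m\in\bbN$, because for $m\ge 1$ this just asserts that there are exactly $m-1$ elements strictly between $x$ and $y$. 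Second, it follows that if $f$ is an almost $\{<,+\}$-isomorphism then so is $f^{-1}$: apply the distance-preservation of $f^{-1}$ to the standard-finite quantity $|f(a+_{M_1}b)-(f(a)+_{M_2}f(b))|$. Hence in parts 1 and 2 it is enough to prove that $f$ maps each relation \emph{into} the corresponding one.

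Part 1 is then immediate: for $a,b\in M_1\setminus\bbN$, $a\,E^0_{M_1}\,b$ holds iff the interval from $\min\{a,b\}$ to $\max\{a,b\}$ is finite (it has $|a-b|+1$ elements), and $f$ restricts to a bijection of that interval onto the interval from $\min\{f(a),f(b)\}$ to $\max\{f(a),f(b)\}$, so the two sides of $a\,E^0_{M_1}\,b\Leftrightarrow f(a)\,E^0_{M_2}\,f(b)$ agree. For part 2 I would first isolate a lemma: if $f$ is an almost $\{<,+\}$-isomorphism, then for every $x\in M_1$ and every $n\in\bbN$ the distance between $f(x\times_{M_1}n)$ and $f(x)\times_{M_2}n$ is standard-finite; this goes by induction on $n$ from $x\times_{M_1}(n+1)=(x\times_{M_1}n)+_{M_1}x$, since $\bbN$ is closed under addition. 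Granting the lemma, suppose $a\,E^2_{M_1}\,b$ with witness $n$, so $a<_{M_1}b\times_{M_1}n$ (and symmetrically). Applying $f$ gives $f(a)<_{M_2}f(b\times_{M_1}n)$, and since $f(b)\in M_2\setminus\bbN$ dominates the standard error from the lemma, $f(b\times_{M_1}n)<_{M_2}f(b)\times_{M_2}n+f(b)=f(b)\times_{M_2}(n+1)$; hence $f(a)<_{M_2}f(b)\times_{M_2}(n+1)$, and symmetrically, so $f(a)\,E^2_{M_2}\,f(b)$ with witness $n+1$.

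For $E^1$ I would first observe that the witness $c$ in the definition of $E^1_{M_1}$ may be taken nonstandard: if $c\in\bbN$ then $a\,E^0_{M_1}\,b$, say $|a-b|=r$, and $c':=\lfloor\sqrt{\min\{a,b\}}\rfloor$ works, because $\min\{a,b\}$ is nonstandard, so $c'$ is nonstandard, $nc'<(c')^2\le\min\{a,b\}$ for all standard $n$, and $\min\{a,b\}>(r+1)^2$ forces $c'>r$. So fix a nonstandard witness $c$ for $a\,E^1_{M_1}\,b$ and put $c^{\dagger}:=f(c)+_{M_2}f(c)$, so $f(c)\in M_2\setminus\bbN$. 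From $c\times_{M_1}n<_{M_1}a$ for all standard $n$, $f$ together with the lemma and the fact that the error is $<f(c)$ gives $f(c)\times_{M_2}n<_{M_2}f(a)+f(c)$ for all standard $n$; replacing $n$ by $n+1$ yields $f(c)\times_{M_2}n<_{M_2}f(a)$ for all standard $n$, hence $n\cdot c^{\dagger}=f(c)\times_{M_2}(2n)<_{M_2}f(a)$, and likewise with $b$ in place of $a$. From $a<_{M_1}b+_{M_1}c$, applying $f$ and almost-additivity gives $f(a)<_{M_2}f(b+_{M_1}c)<_{M_2}f(b)+f(c)+f(c)=f(b)+c^{\dagger}$, and symmetrically. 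Thus $f(a)\,E^1_{M_2}\,f(b)$, with witness $c^{\dagger}$. Part 3 is the same, reading ``embedding'' for ``isomorphism'' and ``into'' for ``onto''; being unused, I would omit it.

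I expect the $E^1$ step to be the main obstacle. Since $+$ is respected only modulo a finite error, the $M_2$-witness must be inflated — here to $2f(c)$ — to absorb the error in the clause $a<b+c$ and, after unfolding $c\times_{M_1}n$ as an $n$-fold sum, the errors in the clauses $c\times_{M_1}n<a$; verifying that these are negligible is precisely the remark that $f(c)$ is nonstandard while every error produced is standard-finite. The small preliminary reduction to a nonstandard witness is what lets this be done without a separate argument for $E^0$-equivalent pairs, and it is there that genuine PA (integer square roots) is used.
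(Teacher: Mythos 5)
Your proof is correct. The paper's own proof of \ref{a9} is literally the single word ``Straight,'' so there is nothing to compare against on the paper's side; what you have done is to supply the routine verification the author chose to elide. A few points of substance that you handled correctly and that are the only places one could go wrong: (i) an order-isomorphism is the identity on $\bbN$ and preserves standard-finite distances, which is what makes the reduction to the ``into'' direction legitimate via $f^{-1}$; (ii) for $E^2$ the standard error from the multiplicativity lemma is absorbed by passing from witness $n$ to $n+1$ because $f(b)$ is nonstandard; and (iii) for $E^1$ you must inflate the witness (here to $2f(c)$) to absorb the almost-additivity error, and your preliminary replacement of a possibly standard witness $c$ by the nonstandard $\lfloor\sqrt{\min\{a,b\}}\rfloor$ is a clean way to avoid a case split (one could instead just case on whether $c\in\bbN$, handling the $E^0$-equivalent case directly, but your reduction is tidier). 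The only stylistic remark is that one could take $c^{\dagger}=f(c)+m$ for a single standard $m$ bounding the error rather than doubling $f(c)$; the effect is the same.
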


\begin{PROOF}{\ref{a9}}
Straight.
\end{PROOF}

\noindent
Recalling Definition \ref{a6}(2)
\begin{theorem}
\label{a13}  
If $M_1$ is 2-o.r. and $M_1 \rest \{<\},M_2 \rest \{<\}$ 
are isomorphic \then \,  $M_1,M_2$ are almost $\{<,+\}$-isomorphic.
\end{theorem}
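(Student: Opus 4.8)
The plan is to take an order-isomorphism $f$ from $M_1\rest\{<\}$ onto $M_2\rest\{<\}$ and try to show that $f$ is already an almost $\{<,+\}$-isomorphism (with, possibly, a class-by-class correction at the end). First I record two reductions. Since successor and predecessor are $\{<\}$-definable, $f$ commutes with standard-integer translation, i.e.\ $f(x+_{M_1}k)=f(x)+_{M_2}k$ for $k\in\bbZ$; in particular $f$ carries $E^0_{M_1}$ onto $E^0_{M_2}$ and preserves the distance inside each $E^0$-class (this is \ref{a9}(1)). Hence, to check that $f$ is almost $\{<,+\}$ it is enough to show, for all $a,b\in M_1$, that $f(a+_{M_1}b)$ and $f(a)+_{M_2}f(b)$ are $E^0_{M_2}$-equivalent; and as $f$ fixes $\bbN$ pointwise we may assume $a,b\in M_1\setminus\bbN$, so also $a+_{M_1}b\in M_1\setminus\bbN$.

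Next comes the order-theoretic computation. For $a,b\in M_1$ the interval $[a,a+_{M_1}b)_{M_1}$ is order-isomorphic to $(M_{1,<b},<)$ via $x\mapsto x-a$, so its $f$-image $[f(a),f(a+_{M_1}b))_{M_2}$ is order-isomorphic to $(M_{1,<b},<)$, hence (applying $f$ once more to $M_{1,<b}$) to $(M_{2,<f(b)},<)$. On the other hand $[f(a),f(a)+_{M_2}f(b))_{M_2}$ is order-isomorphic to $(M_{2,<f(b)},<)$ via $y\mapsto y-f(a)$. Translating the first interval down by $f(a)$ (legitimate since $f(a+_{M_1}b)\ge f(a)$, as $a+_{M_1}b\ge a$ and $f$ is monotone) we get $(M_{2,<u},<)\cong(M_{2,<v},<)$ where $u=f(a+_{M_1}b)-f(a)$ and $v=f(b)$; by Claim \ref{a7}(3) this says $u\,E^5_{M_2}\,v$.

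Now I use the hypothesis. Because $M_1$ is $2$-o.r., Claim \ref{a7}(2),(3) give on $M_1\setminus\bbN$ the equivalences $a\,E^2_{M_1}\,b\iff a\,E^5_{M_1}\,b\iff(M_{1,<a},<)\cong(M_{1,<b},<)$; the last is purely order-theoretic, so $f$ transports it and therefore $f$ maps each $E^2_{M_1}$-class onto an $E^5_{M_2}$-class. Since $E^2_{M_1}$ is convex (\ref{a7}(1)) and $f$ is an order isomorphism, each such class $C':=f(C)$ is convex; and each is closed under $+_{M_2}$ (if $c_1,c_2\in C'$ then $\max\{c_1,c_2\}<c_1+_{M_2}c_2<\,3\cdot\max\{c_1,c_2\}$, so $c_1+_{M_2}c_2$ stays $E^2_{M_2}$-comparable, hence $E^5_{M_2}$-equivalent, to $\max\{c_1,c_2\}$). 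In particular both $f(a+_{M_1}b)$ and $f(a)+_{M_2}f(b)$ lie in a single $E^5_{M_2}$-class $C'$, on which all initial segments are order-isomorphic, and it remains to upgrade $u\,E^5_{M_2}\,v$ (equivalently $u-v$ lying in this class pattern) to $u\,E^0_{M_2}\,v$; feeding that back gives $f(a+_{M_1}b)-f(a)\equiv f(b)\pmod{\bbZ}$, hence $f(a+_{M_1}b)\equiv f(a)+_{M_2}f(b)\pmod{\bbZ}$, as required.

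The main obstacle is exactly this last upgrade, from ``order-isomorphic initial segments'' ($E^5$) to ``finite distance'' ($E^0$): for a general model these are far apart, so the single interval comparison above does not suffice by itself, and the $2$-o.r.\ hypothesis on $M_1$ has to be exploited more globally (across all the $E^2_{M_1}$-classes simultaneously, transported to $M_2$ by $f$). The route I expect to work is not to insist on the given $f$, but to modify it on each $E^2_{M_1}$-class $C$: replace $f\rest C$ by another order-isomorphism of $C$ onto $f(C)$ which is calibrated to $+$ up to a finite shift, using the explicit piecewise-affine automorphisms produced in the proof of \ref{a7}(2) as the calibrating moves, and then verifying that these local corrections paste together into a single almost $\{<,+\}$-isomorphism (cross-class sums being harmless, since $a+_{M_1}b$ always falls in the $E^2_{M_1}$-class of the larger of $a,b$). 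The translation-equivariance of $f$, the interval bookkeeping, and the transport of $E^2_{M_1}=E^5_{M_1}$ along $f$ are routine; the delicate point is obtaining the finite bound rather than merely matching the order types of initial segments.
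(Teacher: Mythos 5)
Your reductions are sound: an order-isomorphism commutes with $\pm 1$ and hence with standard-integer translations, \ref{a9}(1) gives preservation of $E^0$, and the interval computation is correct. With $u = f(a+_{M_1}b) -_{M_2} f(a)$ and $v = f(b)$ you do get $(M_{2,<u},<) \cong (M_{2,<v},<)$, hence $u\, E^5_{M_2}\, v$ by \ref{a7}(3). But, as you yourself flag, that is far from the $u\,E^0_{M_2}\,v$ actually needed: since $M_1$ is $2$-o.r.\ we have $E^5_{M_2} = E^5_{M_1} = E^2_{M_1}$, a vastly coarser relation than $E^0$, so the computation alone tells you only that $f(a+_{M_1}b)$ and $f(a)+_{M_2}f(b)$ lie in the same $E^2$-class, which is where $(*)_4$ of the paper already puts you. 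The ``class-by-class calibration'' you propose as a remedy is not carried out, and it is not at all clear that it can be: within one $E^2_{M_1}$-class $C$ you would need an order-isomorphism $C \to f(C)$ that simultaneously matches $M_1$- and $M_2$-addition up to a \emph{standard} additive error across the whole class, and the piecewise-affine maps from the proof of \ref{a7}(2) produce order-automorphisms of a single model, not almost-additive comparisons between two.

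The idea that actually bridges the gap in the paper is a global change of coordinates, not a per-class correction. One passes to the ``exponential'' models $M^*_\ell$ with universe $X_\ell=\{(2^a)^{M_\ell}:a\in M_\ell\}$ via $f_\ell(a)=(2^a)^{M_\ell}$; under this substitution $+_{M^*_\ell}$ becomes $\times_{M_\ell}$ ($\circledast_2$) and, crucially, $E^0_{M^*_\ell}$ corresponds to $E^2_{M_\ell}$ ($\circledast_3$). One then shows each $E^2_{M_1}$-class meets $X_\ell$ in a $\bbZ$-chain coinitial and cofinal in the class ($(*)_8$), that $E^2_{M_1}=E^2_{M_2}$ ($(*)_6$), and thereby cheaply builds an order-isomorphism $h:X_1\to X_2$ matching classes ($\circledast_4$); the almost-additivity then reduces to the multiplicative facts $(*)_4$--$(*)_6$ and comes out as $\circledast_6$. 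In other words, the leap from $E^5$-equivalence to $E^0$-equivalence is accomplished by working at $\log$ scale, where it becomes the leap from $E^5$ to $E^2$, which the $2$-o.r.\ hypothesis does control. Without that device (or an equivalent one) your argument stops exactly at the point you yourself identify as the delicate step, so as written this is not a proof.
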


\begin{remark}
\label{a15}
1) We have not said ``by the same isomorphism".

\noindent
2) The assumption is too strong to be true for models of full $\PA$.  
But it makes sense for weaker versions of PA, 
see \ref{k13} and part of the proof serves
as proof to \ref{b15} so indirectly serves \ref{b13}.
\end{remark}

\begin{question}
\label{a17}  
Are $M_1,M_2$ isomorphic \when \, (the main case is $\ell = 3$):
\mn
\begin{enumerate}
\item[$(a)$]   $M_1,M_2$ are isomorphic as linear orders
\sn
\item[$(b)$]   $M_1$ is $\ell$-o.r. 
\end{enumerate}
\end{question}

\begin{remark}
\label{a19}
In \ref{a17}, it is less desirable but we may consider
adding that also $M_2$ is l.o.r.
\end{remark}

\begin{PROOF}{\ref{a13}}
Easily by \ref{a7}(1)
\mn
\begin{enumerate}
\item[$(*)_0$]   each $E^2_{M_\ell}$-equivalence class is convex.
\end{enumerate}
\mn
Without loss of generality
\mn
\begin{enumerate}
\item[$(*)_1$]   $<_{M_1} = <_{M_2}$ hence $M_1,M_2$ have the same
universe and let 
$M: = M_1 \rest \{<\} = M_2 \rest \{<\}$.
\end{enumerate}
\mn
Also as usual
\mn
\begin{enumerate}
\item[$(*)_2$]   $\bbN \subseteq M_\ell$ for $\ell=1,2$.
\end{enumerate}
\mn
Now
\mn
\begin{enumerate}
\item[$(*)_3$]   if $a,b \in M \backslash \bbN$ and $a <_M b$ and
$a +_{M_2} b = c$ and $a' = c -_{M_1} b$, that is $M_1 \models 
``c = a' + b"$ \then \, $a E^2_{M_1} a'$.
\end{enumerate}
\mn
[Why?  Now $[b,c)_{M_2} = [b,c)_M$ is 
isomorphic to $[0,a)_{M_2}$ as linear orders (as $M_2 \models
``a+b=c"$ as $M_2$ satisfies PA) hence $[b,c)_{M_1}$ is isomorphic to
$[0,a)_{M_1}$ as linear orders.  Of course $[b,c)_{M_1}$ is
isomorphic to $[0,c -_{M_1} b) = [0,a')_{M_1}$ as linear orders.
So $[0,a)_{M_1},[0,a')_{M_1}$ are isomorphic as linear orders.  But $M_1$ is
2-o.r. hence $a E^2_{M_1} a'$ as required].
\mn
\begin{enumerate}
\item[$(*)_4$]   if $a <_M b$ and $b \in M \backslash \bbN$ then $b,a
+_{M_1} b,a +_{M_2} b$ are $E^2_{M_1}$-equivalent.
\end{enumerate}
\mn
[Why?   Similar proof: as by the proof of \ref{a7}(3) as trivially $b
E^2_{M_\ell} (a +_{M_\ell} b)$ for $\ell=1,2$ and $(a +_{M_1} b) 
E^5_{M_1} (a +_{M_2} b)$ so use ``$M_1$ is 2-o.r." to deduce $(a
+_{M_1} b)E^2_{M_1}(a +_{M_2} b)$, together we are done.]
\mn
\begin{enumerate}
\item[$(*)_5$]  if $a,b \in M \backslash \bbN$ and 
$a \times_{M_\ell} b =
c_\ell$ for $\ell=1,2$ \then \, $c_1 E^2_{M_1} c_2$.
\end{enumerate}
\mn
[Why?  For $\ell=1,2$, as 
$M_\ell$ is a model of PA it follows that $(M_{< c_\ell},
<_M)$ is isomorphic to $(M_{<a}$,
$<_M) \times (M_{<b},<_M)$ ordered lexicographically.  
Hence $((M_1)_{< c_1}$,
\newline
$<_M) = (M_{< c_1},<_M)$ and $((M_2)_{< c_2},<_M)
= (M_{< c_2},<_M)$ are isomorphic (and trivially $c_1,c_2 \notin \Bbb N$) 
hence by ``$M_1$ is 2-o.r." we have $c_1 E^2_{M_1} c_2$ as promised.]
\mn
\begin{enumerate}
\item[$(*)_6$]  for $a,b \in M \backslash \bbN$ we have:

$a E^2_{M_2} b$ iff $a E^2_{M_1} b$.
\end{enumerate}
\mn
[Why?  First, assume $a E^2_{M_2} b$; now \wilog \, $a <_M b$, and
say $k \in \bbN,k \ne 0,b < k \times_{M_2} a$ so $b < a +_{M_2} \ldots
+_{M_2} a$ ($k$ summands).  By $(*)_4$ we can prove by induction on
$k$ that $b E^2_{M_1} a$ as required in the ``only if" direction.
(Alternatively by \ref{a7}(2) we have $a E^5_M b$ hence by ``$M_1$ is
2-o.r.", we get $b E^2_{M_1} a$.)

Second, assume $\neg(a E^2_{M_2} b)$ and \wilog \, $a <_M b$; note
that we cannot use the same argument as above interchanging $M_1,M_2$,
because only on $M_1$ we assume its being $2$ - o.r.  As $M_2
\models$ PA there is $c \in M_2$ such that $M_2 \models ``a \times c 
\le b < a \times c + a"$, now $c \notin \bbN$ because we are assuming
$\neg(a E^2_{M_2} b)$.  By $(*)_5$ we have $(a \times_{M_1} c) 
E^2_{M_1}(a \times_{M_2} c)$.  But by $(*)_4$, 
we have $a \times_{M_2} c,a \times_{M_2} c +_{M_2} a$ 
are $E^2_{M_1}$-equivalent hence by the choice of $c$ and 
$(*)_0$ also $a \times_{M_2} c,b$
are $E^2_{M_1}$-equivalent; so together with the previous sentence 
$(a \times_{M_1} c) E^2_{M_1} b$.  But by the definitions, as $c \in M
\backslash \bbN$ clearly $\neg(a E^2_{M_1}(a \times_{M_1} c))$ hence
$\neg(a E^2_{M_1} b)$ as required in the ``if" direction.]
\mn
\begin{enumerate} 
\item[$(*)_7$]   if $a \in M \backslash \bbN$ then
$(a/E^2_{M_1},<_M)$ has cofinality $\aleph_0$ and also its inverse has
cofinality $\aleph_0$.
\end{enumerate}
\mn
[Why?  As $M_1 \models$ PA the sequence $\langle a 
\times_{M_1} 2^n:n \in
\bbN\rangle$ is increasing, the members form an unbounded subset of
$a/E^2_{M_1}$; similarly $\langle\text{min}\{b \in M:a \le b \times_{M_1}
2^n\}:n \in \bbN\rangle$ is decreasing, the members form a
subset of $a/E^2_{M_1}$ unbounded from below, 
recalling the definition of $E^2_{M_1}$.]
\mn
\begin{enumerate}
\item[$(*)_8$]   let $\ell \in \{1,2\}$
if $a \in M \backslash \bbN$ and $X_\ell =
\{(2^b)^{M_\ell}:b \in M_\ell\}$ for $\ell=1,2$ \then \, $X_\ell \cap 
(a/E^2_{M_1})$ has order-type $\bbZ$ and is unbounded in
$(a/E^2_{M_1},<_M)$ from above and from below.
\end{enumerate}
\mn
[Why?  If $a' \in M \backslash \Bbb N$ and 
$\ell \in \{1,2\}$, \then \, as $M_\ell \models$ PA for some $b_\ell$ we have 
$M_\ell \models ``2^{b_\ell} \le a' < 2^{b_\ell +1} = 
2^{b_\ell} + 2^{b_\ell}"$ so recalling the definition of
$E^2_{M_\ell}$ we are done.]

We may hope that: if $a \in M \backslash \Bbb N$ and
$M_\ell \models ``2^a = b_\ell"$ for $\ell=1,2$ \then \, $b_1
E^2_{M_1} b_2$.  Anyhow
\mn
\begin{enumerate}
\item[$\circledast_1$]  for $\ell=1,2$
\sn
\item[${{}}$]  $(a) \quad$ define $f_\ell:M_\ell \rightarrow M_\ell$
by $f_\ell(a) = (2^a)^{M_\ell}$,
\sn
\item[${{}}$]  $(b) \quad$ define: $M^*_\ell$ is the model with universe
$X_\ell := \text{ Rang}(f_\ell)$ such that 

\hskip25pt $f_\ell$ is an 
isomorphism from $M_\ell$ onto $M^*_\ell$ 
\sn
\item[$\circledast_2$]   for $\ell=1,2$ if $a,b \in X_\ell$ 
then $a +_{M^*_\ell} b = a \times_{M_\ell} b$. 
\end{enumerate}
\mn
[Why?  As PA $\vdash 2^x 2^y = 2^{x+y}$.]
\mn
\begin{enumerate}
\item[$\circledast_3$]   if $\ell=1,2$ and $a,b \in M_1 \backslash
\bbN$ \then
\begin{enumerate}
\item[$(a)$]    $a E^0_{M_\ell} b$ iff $f_\ell(a) E^2_{M^*_\ell}
f_\ell(b)$
\sn
\item[$(b)$]  $a E^1_{M_\ell} b$ iff $f_\ell(a)E^3_{M^*_\ell} f_\ell(b)$
\sn
\item[$(c)$]  $a E^2_{M_\ell} b$ iff $f_\ell(a) E^4_{M^*_\ell} f_\ell(b)$.
\end{enumerate}
\end{enumerate}
\mn
[Why?  Look at the definitions and do basic arithmetic.]
\mn
\begin{enumerate}
\item[$\circledast_4$]  there is an order isomorphism $h$ from $X_1$
onto $X_2$ such that
\begin{enumerate}
\item[$(a)$]  $h \rest \{(2^n)^{\bbN}:n \in \bbN\}$ is the identity
\sn
\item[$(b)$]   if $a \in M \backslash \bbN$ then $h$ maps $X_1
\cap (a/E^2_{M_1})$ onto $X_2 \cap (a/E^2_{M_2})$.
\end{enumerate}
\end{enumerate}
\mn
[Why?  By $(*)_8 + (*)_6$.]
\mn
\begin{enumerate}
\item[$\circledast_5$]   if $a,b \in X_1$ then $a E^0_{M^*_1} b
\Leftrightarrow h(a) E^0_{M^*_2} h(b)$.
\end{enumerate}
\mn
[Why?  By $\circledast_3$ and $\circledast_4$ and the 
definition of $E^0_{M^*_\ell}$.]
\mn
\begin{enumerate}
\item[$\circledast_6$]   if $M^*_\ell \models ``a_\ell + b_\ell = c_\ell"$
for $\ell=1,2$ and $h(a_1) = a_2$ and $h(b_1) = b_2$ then 
\begin{enumerate}
\item[$(a)$]   $c_1 E^2_{M_\ell} c_2$ for $\ell=1,2$
\sn
\item[$(b)$]   $c_1 E^0_{M^*_\ell} c_2$ for $\ell=1,2$.
\end{enumerate}
\end{enumerate}
\mn
[Why?  If $a_1 \in \bbN$ or $b_1 \in \bbN$ the conclusion follows easily so
we assume $a_1,b_1 \notin \bbN$. 
For $\ell=1,2$ by $\circledast_2$ we have $M_\ell \models ``a_\ell
\times b_\ell = c_\ell"$.  Also by $\circledast_4(b)$ we have
$x \in X_1 \backslash \bbN \Rightarrow  x E^2_{M_1} h(x)$ 
recalling $E^2_{M_1} = E^2_{M_2}$ by $(*)_6$ we have 
$a_1 E^2_{M_2} a_2,b_1 E^2_{M_2} b_2$ hence for some $n \in \bbN$ we
have $M_2 \models ``a_1 < n \times a_2 \wedge a_2 < n \times a_1
\wedge b_1 < n \times b_2 \wedge b_2 < n \times b_1"$ hence $M_2 \models ``a_1
\times b_1 < n^2 \times a_2 \times b_2 \wedge a_2 \times b_2 < n^2
\times a_1 \times b_1"$ hence $(a_1 \times_{M_2} b_1),(a_2 \times_{M_2} b_2)$
are $E^2_{M_2}$-equivalent and also are $E^2_{M_2}$-equivalent.

So by $(*)_5$ we have $(a_1 \times_{M_2} b_1),(a_1 \times_{M_1} b_1)$
are $E^2_{M_1}$-equivalent and also $(a_2 \times_{M_1} b_2),
(a_2 \times_{M_2} b_2)$ are $E^2_{M_1}$-equivalent
hence together with the previous paragraph 
by $(*)_6$ they are $E^2_{M_\ell}$-equivalent, in
particular $c_1,c_2$ are $E^2_{M_\ell}$-equivalent as required in
clause (a) of $\circledast_6$.  By $\circledast_3 + \circledast_1(b)$  
also clause (b) of $\circledast_6$ there follows.]

So by $\circledast_4 + \circledast_6(b)$ we are done. 
\end{PROOF}

We have used
\begin{observation}
\label{a19}
Assume $a \in M \backslash \bbN$

\noindent
1) $\langle a+n:n \in \bbN \rangle$ is increasing and cofinal in
$a/E^0_M$.

\noindent
2) $\langle a-n:n \in \bbN \rangle$ is decreasing and unbounded from
   below in $a/E^0_M$.

\noindent
3) $\langle n \times a:n \in \bbN \rangle$ is increasing and cofinal in
 $a/E^2_M$.

\noindent
4) Moreover
$\langle\text{min}\{b:n \times_M b \ge a\}:n \in \bbN\}\rangle$ is
decreasing and unbounded from below in $a/E^2_M$.

\noindent
5) Moreover for some $b,2^b \le a <
   2^{b+1}$ hence we can use in (3),(4) the sequence $\langle
   2^{b+n}:n \in \bbN\rangle,\langle 2^{b-n}:n \in \bbN\rangle$. 
\end{observation}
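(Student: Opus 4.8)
The plan is to read off each part directly from the definitions of $E^0_M$ and $E^2_M$ in \ref{a5}, after the single remark that $a/E^0_M=\{b : a<_M b+n \text{ and } b<_M a+n \text{ for some } n\in\bbN\}$ is the additive ``finite-distance'' class of $a$, while $a/E^2_M=\{b : b<_M a\times_M n \text{ and } a<_M b\times_M n \text{ for some } n\in\bbN\}$ is the multiplicative Archimedean class of $a$, and that since $a$ is nonstandard all the subtractions, floors, and exponents written below are legitimate elements of $M$. For (1) and (3), indexing by $n\ge 1$: the sequences $\langle a+n\rangle$ and $\langle n\times_M a\rangle$ are increasing by PA (as $a>0$), each term lies in the relevant class (immediate from the definition, taking a suitable single witness $m=n+1$), and they are cofinal because if $b$ lies in $a/E^0_M$ (resp. $a/E^2_M$) the very witness $n$ produces $b<_M a+n$ (resp. $b<_M a\times_M n=n\times_M a$). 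Part (2) is the mirror image: $\langle a-n\rangle$ is decreasing and lies in $a/E^0_M$, and for $b\in a/E^0_M$ the witness $n$ with $a<_M b+n$ gives $a-n<_M b$.

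For (4) I would first check that $b_n:=\min\{b : n\times_M b\ge_M a\}$ is well defined for $n\ge 1$ (least-number principle inside $M$; the set is nonempty as it contains $a$) and decreasing in $n$ (enlarging $n$ enlarges the set and hence lowers its minimum). Each $b_n$ is nonstandard (else $n\times_M b_n$ would be a standard element $\ge_M a$), so $b_n\ge_M 1$, and minimality gives $n\times_M(b_n-1)<_M a$, hence $n\times_M b_n<_M a+n<_M a\times_M 2$ (using $n<_M a$); together with $n\times_M b_n\ge_M a$ this places $b_n$ in $a/E^2_M$. For ``unbounded from below'', I would use that $a/E^2_M$ has no least element -- for nonstandard $c$ one has $c\,E^2_M\,\lfloor c/2\rfloor$ with $\lfloor c/2\rfloor<_M c$ (also nonstandard) -- so given $c\in a/E^2_M$ choose $c'<_M c$ in the class and $m\in\bbN$ with $a<_M c'\times_M m$; then $c'\in\{b:m\times_M b\ge_M a\}$, so $b_m\le_M c'<_M c$.

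Part (5) repeats the argument with exponentiation. Since PA proves $2^x>_M x$, the set $\{b:2^b\le_M a\}$ is nonempty (contains $0$) and bounded (by $a$); its maximum $b$ satisfies $2^b\le_M a<_M 2^{b+1}$, and $b$ is nonstandard since otherwise $a<_M 2^{b+1}$ would be standard. Then $\langle 2^{b+n}\rangle$ is increasing, each term lies in $a/E^2_M$ (since $2^{b+n}=2^n\times_M 2^b\le_M 2^n\times_M a$ and, for $n\ge 1$, $a<_M 2^{b+1}\le_M 2^{b+n}$), and it is cofinal because it eventually dominates the cofinal sequence $\langle n\times_M a\rangle$ of part (3) (using $n\le_M 2^n$ and $a<_M 2^{b+1}$). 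Symmetrically $\langle 2^{b-n}\rangle$ is decreasing, lies in $a/E^2_M$, and is unbounded from below: for $c\in a/E^2_M$ pick $m$ with $a<_M c\times_M m$; then $2^b\le_M a<_M c\times_M 2^m$ (as $m\le_M 2^m$), hence $2^{b-m}<_M c$. There is no real obstacle here; the only things to watch are that the displayed sequences start at $n=1$ where needed and that the witnessing elements $b$ and $b_n$ are nonstandard, so that the subtractions and halvings used to keep them inside their classes remain legitimate in $M$.
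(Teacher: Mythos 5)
Your proof is correct. The paper states Observation \ref{a19} without a separate proof -- it is introduced with ``We have used'' right after Theorem \ref{a13}, whose step $(*)_7$ already sketches the relevant cofinality facts using essentially the same witnesses ($\langle a\times 2^n\rangle$ and $\langle\min\{b: b\times 2^n\ge a\}\rangle$), which your parts (3)--(5) reproduce in more explicit detail with the routine verifications spelled out. One minor stylistic note: in part (4) you take a detour through ``$a/E^2_M$ has no least element'' before producing $b_m<c$; you could get this more directly by observing that for $c$ in the class and $m$ with $a<c\times m$, one has $a\le(m+1)\times(c-1)$ (since $m\le c-1$ for nonstandard $c$), so $b_{m+1}\le c-1<c$. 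Either way the argument is sound, and you are right to flag the nonstandardness of $a$, $b$, and $b_n$ as what keeps the subtractions and halvings inside $M$.
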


\begin{observation}
\label{a21}
1) Assume $M_k \models ``a \times b =
c_k"$ for $k=1,2$ and $M_1 \rest \{<\} = M_2 \rest \{<\}$.  \Then \, $c_1
E^5_{M_2} c_2$ for $k=1,2$.

\noindent
2) Assume  $M_k \models ``a_1 \times
   a_2 \times \ldots \times a_m = c_k"$ for $k=1,2$ and $M_1 \rest
   \{<\} = M_2 \rest \{<\}$.  \Then \, $c_1 E^5_{M_k} c_2$ for $k=1,2$.
\end{observation}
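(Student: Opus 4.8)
The plan is to reduce Observation \ref{a21}(2) to the binary case \ref{a21}(1) by an easy induction on $m$, so the real content is part (1): if $M_1,M_2$ share the same linear order and $M_k\models``a\times b=c_k"$ for $k=1,2$, then $c_1E^5_{M_k}c_2$; since $E^5$ does not depend on which of $M_1,M_2$ we name (the order is the same), it suffices to produce a single order-automorphism of $M:=M_1\rest\{<\}=M_2\rest\{<\}$ carrying $c_1$ to $c_2$. First I would dispose of the degenerate cases: if $a\in\bbN$ or $b\in\bbN$ then $c_1-c_2$ lies in a bounded interval and in fact $c_1 E^0_M c_2$, whence $c_1E^5_Mc_2$ by \ref{a7}(3) applied through the fact (used already in the proof of \ref{a13}) that $E^0_M$-equivalent elements below a common bound give isomorphic initial segments; so assume $a,b\in M\backslash\bbN$.

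The key point is the same one used in $(*)_5$ of the proof of \ref{a13}: because each $M_k$ satisfies PA, the initial segment $((M_k)_{<c_k},<_{M_k})$ is isomorphic, as a linear order, to the lexicographic product $(M_{<a},<_M)\times(M_{<b},<_M)$ — one writes $z<c_k$ uniquely as $z=q\times_{M_k}a+_{M_k}r$ with $r<a$, $q<b$, and checks that $z\mapsto(q,r)$ is an order isomorphism onto the lex product (the arithmetic is provable in PA). Crucially the right-hand side does not mention $k$ at all — it is built purely from the common linear order $M$ together with the two fixed elements $a,b$. Hence $(M_{<c_1},<_M)\cong (M_{<a},<_M)\times_{\mathrm{lex}}(M_{<b},<_M)\cong (M_{<c_2},<_M)$, and moreover $c_1,c_2\notin\bbN$ since $a,b\notin\bbN$. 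By \ref{a7}(3), an order isomorphism $(M_{<c_1},<_M)\cong(M_{<c_2},<_M)$ upgrades to an order-automorphism of $M$ sending $c_1$ to $c_2$ (extend by $c\mapsto c_2+(c-c_1)$ above $c_1$), which is exactly $c_1E^5_Mc_2$, i.e. $c_1E^5_{M_k}c_2$ for $k=1,2$.

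For part (2), induct on $m$. For $m=1$ we have $c_1=a_1=c_2$ and there is nothing to prove; for $m=2$ it is part (1). For the inductive step, set $d_k:=a_1\times_{M_k}\cdots\times_{M_k}a_{m-1}$ and $c_k=d_k\times_{M_k}a_m$. We cannot directly apply part (1) to $d_1,a_m$ versus $d_2,a_m$ because $d_1\ne d_2$ in general; instead argue in two moves. First, by the induction hypothesis $d_1E^5_{M_k}d_2$, so fix an order-automorphism $g$ of $M$ with $g(d_1)=d_2$; then $c_1=d_1\times_{M_1}a_m$ and (applying $g$, which is an order-automorphism, to the isomorphism $(M_{<c_1},<_M)\cong(M_{<d_1})\times_{\mathrm{lex}}(M_{<a_m})$) we get that $d_1\times_{M_1}a_m$ and the element $c'$ defined by $(M_{<c'},<_M)\cong(M_{<d_2})\times_{\mathrm{lex}}(M_{<a_m})$ satisfy $c_1E^5_Mc'$. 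Second, by part (1) applied to $d_2,a_m$ in $M_1$ and $M_2$ — more precisely, by the observation above that the lex product $(M_{<d_2})\times_{\mathrm{lex}}(M_{<a_m})$ is, in $M_2$, order-isomorphic to $(M_{<c_2})$ — we get $c'E^5_Mc_2$; compose to conclude $c_1E^5_Mc_2$. The main obstacle is bookkeeping the lexicographic-product identification carefully enough in the inductive step so that the element $c'$ is genuinely well defined from $(M,d_2,a_m)$ independently of $M_1$ versus $M_2$; once that is set up the rest is, as the author would say, ``check''.
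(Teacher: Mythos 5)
Your approach is essentially the paper's: the paper's proof of part (1) simply points to $(*)_5$ in the proof of \ref{a13}, whose content is exactly the lexicographic-product identification $(M_{<c_k},<_M)\cong(M_{<a},<_M)\times_{\mathrm{lex}}(M_{<b},<_M)$ you describe, with the right-hand side independent of $k$; combined with \ref{a7}(3) this gives $E^5$. (Note that $(*)_5$ itself goes on to conclude $E^2$ using the 2-o.r.\ hypothesis of \ref{a13}, which is not available here; you correctly extract only the order-isomorphism step, which is the part that matters.) For part (2) the paper's ``similar proof'' presumably means the $m$-fold lexicographic product applied directly, while you do an induction on $m$; both work, the direct route being slightly cleaner.

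One small correction: your dispatch of the degenerate case is both unnecessary and not quite right. If, say, $a=2\in\bbN$ and $b\notin\bbN$, it is not generally true that $c_1 E^0_M c_2$. For instance, obtain $M_2$ from $M_1$ by conjugating $+$ and $\times$ by an order-automorphism $f$ of $M_1\rest\{<\}$ that fixes $\bbN$ pointwise but shifts $b$ by a nonstandard amount; then $2\times_{M_1}b$ and $2\times_{M_2}b$ differ by a nonstandard element, so $E^0_M$ fails. What is true is only the order-isomorphism of initial segments (here $(M_{<c_k},<_M)\cong(M_{<b},<_M)+(M_{<b},<_M)$), and that is precisely what the lexicographic-product argument yields and it applies uniformly whether or not $a,b$ are standard. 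So you should simply drop the case split and run the one argument for all $a,b$ (with $c_k\notin\bbN$, which is the only thing needed for $E^5_M$ to be meaningful).
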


\begin{PROOF}{\ref{a21}}
1) See $(*)_5$ in the proof of \ref{a13}.

\noindent
2) Similar proof.
\end{PROOF}
\newpage

\section {More for $E^3_M$} 

Here we say more on the equivalence relations $E^\ell_M$.  In
\ref{b3} we deal with basic properties: when $E^\ell_\mu \subseteq
E^{\ell +1}_\mu$, when $\ell$-o.r. implies $(\ell +1)$-o.r.,
preservation under $+$ and $\times$.  We also prove one half of our
answer to \ref{h.1} that is in \ref{b9} we prove $a_1 E^3_M b$ implies
$a_1 E^5_\mu a_2$.  Concerning the weak form of uniqueness of the
additive structure in Theorem \ref{b13} we prove e.g. if $M_1,M_2$ are
order isomorphic and $M_1$ is 3-o.r. then $M_1 \rest \{<,+\},M_2 \rest
\{<,+\}$ are almost isomorphic (i.e. the ``error" in $+$ is finite)
but not necessarily by the same isomorphism.  We end (in \ref{b23})
that $a/E^4_\mu$ is divided by $E^3_\mu$ if $I = \{b/E^3_\mu:b \in
a/E^4_\mu\}$ is naturally ordered, is isomorphic to a subset of
$\bbR$, even one which is an additive subgroup (a ``translation" of
the product in $M$).

\begin{claim}
\label{b3}  
1) $E^\ell_M$ refines $E^{\ell +1}_M$ for $\ell=0,1,2,3,5$.

\noindent
2) If $a_k E^\ell_M b_k$ for $k=1,2$ and $\ell = 0,1,2,3,4,5,6$ 
\then \, $(a_1 + a_2) E^\ell_M (b_1 + b_2)$.

\noindent
3) If $a_k E^\ell_M b_k$ for $k=1,2$ and $\ell = 2,3,4$ \then \, 
$(a_1 \times_M a_2) E^\ell_M(b_1 \times_M b_2)$.

\noindent
4) Part (3) holds also for $\ell=5,6$.

\noindent
5) If $M$ is $\ell$-{\rm o.r.} 
\then \, $M$ is $(\ell +1)$-{\rm o.r.} for $\ell=0,1,2,3$.
\end{claim}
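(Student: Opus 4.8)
The plan is to use part (1) as the engine. Once the inclusions $E^0_M\subseteq E^1_M\subseteq E^2_M\subseteq E^3_M\subseteq E^4_M$ and $E^5_M\subseteq E^6_M$ are in hand, part (5) is immediate: if $(M_{<a},<_M)\cong(M_{<b},<_M)$ and $M$ is $\ell$-o.r., then $a\,E^\ell_M\,b$, hence $a\,E^{\ell+1}_M\,b$ by part (1), so $M$ is $(\ell+1)$-o.r.; only $\ell\in\{0,1,2,3\}$ is needed. The genuine arithmetic content is then parts (2) and (3) for $\ell\le 4$; the cases $\ell=5$ of (2) and (4) are handled via the order-theoretic description of the initial segments $M_{<a}$ together with \ref{a7}(3); and the cases $\ell=6$ are reduced to the cases $\ell=5$ through the characterisation in \ref{a7}(4).

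For part (1): if $d\in M\backslash\bbN$, then PA provides a $c\in M\backslash\bbN$ with $nc<_M d$ for every standard $n$ (take $c$ with $c\times c\le_M d$; then $nc<c\times c\le d$ for standard $n<c$), and also a $c\in M\backslash\bbN$ with $c^n<_M d$ for every standard $n$ (take the largest $c$ with $c^c\le_M d$; it is non-standard since $d$ exceeds $m^m$ for every standard $m$, and $c^n<c^c\le d$ for standard $n<c$). Applying the first fact to $d=\min\{a,b\}$: if $a\,E^0_M\,b$, witnessed by $a<b+n_0$, $b<a+n_0$, then since that $c$ exceeds the standard $n_0$ we get $a<b+c$ and $b<a+c$, i.e. $a\,E^1_M\,b$. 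Applying the second fact to $d=\min\{a,b\}$ upgrades $a\,E^2_M\,b$ (witnessed by $a<b\times n_0$, $b<a\times n_0$) to $a\,E^3_M\,b$. The steps $E^1_M\subseteq E^2_M$ and $E^3_M\subseteq E^4_M$ are easier: a witness $c$ for $a\,E^1_M\,b$ has $c<_M\min\{a,b\}$, so $a<b+c<b\times 2$ and $b<a\times 2$; a witness $c$ for $a\,E^3_M\,b$ has $c<_M\min\{a,b\}$, so $a<b\times c<b^2$ and $b<a^2$. Finally $E^5_M\subseteq E^6_M$ is immediate from the definition of $E^6_M$ (or from \ref{a7}(4)). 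This proves (1), hence (5).

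For parts (2) and (3) with $\ell\in\{0,1,2,3,4\}$ one combines witnesses. Under $+$: for $\ell=0$ add the standard bounds ($a_1+a_2<(b_1+b_2)+(n_1+n_2)$); for $\ell=1$ the witness is $c_1+c_2$; for $\ell=2$ one has $a_1+a_2<(b_1+b_2)\times(n_1+n_2)$; for $\ell=3$ the witness $c_1\times c_2$ works (each $c_k$ controls all standard powers of $c_k$ below $a_k$ and below $b_k$, so $(c_1\times c_2)^n\le\max\{c_1,c_2\}^{2n}<a_1+a_2$ and similarly $<b_1+b_2$, while $a_1+a_2<b_1c_1+b_2c_2\le(b_1+b_2)\times(c_1\times c_2)$); for $\ell=4$ use $b_1^{n_1}+b_2^{n_2}\le(b_1+b_2)^{\max\{n_1,n_2\}+1}$. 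Under $\times$: for $\ell=2$, $a_1\times a_2<(b_1\times b_2)\times(n_1n_2)$; for $\ell=3$ the witness $c_1\times c_2$ again works (now $(c_1c_2)^n=c_1^nc_2^n<a_1\times a_2$); for $\ell=4$, $b_1^{n_1}\times b_2^{n_2}\le(b_1\times b_2)^{\max\{n_1,n_2\}}$. For the $\ell=5$ cases of (2) and (4): since $M$ satisfies PA, $(M_{<a+b},<_M)$ is the ordered sum $(M_{<a},<_M)+(M_{<b},<_M)$ (split at $a$ and translate the upper piece down by $a$) and $(M_{<a\times b},<_M)$ is the lexicographic product $(M_{<a},<_M)\times(M_{<b},<_M)$ (as in $(*)_5$ of the proof of \ref{a13}); hence from $(M_{<a_k},<_M)\cong(M_{<b_k},<_M)$, which holds by \ref{a7}(3), one gets $(M_{<a_1+a_2},<_M)\cong(M_{<b_1+b_2},<_M)$ and $(M_{<a_1\times a_2},<_M)\cong(M_{<b_1\times b_2},<_M)$, and \ref{a7}(3) again yields $(a_1+a_2)\,E^5_M\,(b_1+b_2)$ and $(a_1\times a_2)\,E^5_M\,(b_1\times b_2)$.

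The least mechanical part, which I expect to be the main obstacle, is the $\ell=6$ case of (2) and (4): $E^6_M$ need not be the transitive closure of a chain of $E^5_M$-steps, so I would argue through \ref{a7}(4), which says $a\,E^6_M\,b$ iff there are $c\le_M\min\{a,b\}$ and an order-automorphism $f$ of $M$ with $\max\{a,b\}\le_M f(c)$. Given $a_k\,E^6_M\,b_k$ ($k=1,2$), take $c_k=\min\{a_k,b_k\}\in M\backslash\bbN$ and an order-automorphism $g_k$ with $\max\{a_k,b_k\}\le_M g_k(c_k)$, and set $c:=c_1+c_2$ (resp. $c_1\times c_2$). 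Then $c\le_M\min\{a_1+a_2,b_1+b_2\}$ (resp. $c\le_M\min\{a_1\times a_2,b_1\times b_2\}$), while $g_1(c_1)+g_2(c_2)\ge_M\max\{a_1+a_2,b_1+b_2\}$ (resp. $g_1(c_1)\times g_2(c_2)\ge_M\max\{a_1\times a_2,b_1\times b_2\}$). Since $g_k(c_k)\,E^5_M\,c_k$ (via $g_k^{-1}$; note order-automorphisms of $M$ fix $\bbN$ pointwise, so $g_k(c_k)\in M\backslash\bbN$), the $\ell=5$ case of (2) (resp. of (4)), just proved, gives $(g_1(c_1)+g_2(c_2))\,E^5_M\,(c_1+c_2)$ (resp. $(g_1(c_1)\times g_2(c_2))\,E^5_M\,(c_1\times c_2)$); letting $f$ be an order-automorphism sending $c$ to that element, $f(c)$ dominates $\max\{a_1+a_2,b_1+b_2\}$ (resp. $\max\{a_1\times a_2,b_1\times b_2\}$), so by \ref{a7}(4) we obtain $(a_1+a_2)\,E^6_M\,(b_1+b_2)$ (resp. $(a_1\times a_2)\,E^6_M\,(b_1\times b_2)$). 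This finishes parts (2), (3) and (4).
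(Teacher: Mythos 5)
Your proof is correct and follows essentially the same route as the paper's: part (1) by direct inspection of the definitions (producing nonstandard witnesses $c$ with $nc<d$ resp.\ $c^n<d$ to pass from $E^0$ to $E^1$ and from $E^2$ to $E^3$), part (5) as an immediate corollary of part (1), parts (2)--(3) for $\ell\le 4$ by combining witnesses, the $\ell=5$ cases via the order-theoretic descriptions of $M_{<a+b}$ and $M_{<a\times b}$ together with \ref{a7}(3), and the $\ell=6$ cases reduced to $\ell=5$ through \ref{a7}(4) with $c_k=\min\{a_k,b_k\}$ and $d_k=g_k(c_k)\ge\max\{a_k,b_k\}$. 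The only cosmetic differences are that you use $c_1+c_2$ (resp.\ $c_1\times c_2$) where the paper uses $\max\{c_1,c_2\}$ after a convexity normalization, and that for $\ell=5$ of part (2) you invoke \ref{a7}(3) rather than writing down the pieced-together automorphism explicitly; both variants are sound and equally short.
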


\begin{remark}
\label{b5}
Concerning \ref{b3} recall that $E^3_M$ refines $E^5_M$ by \ref{a7}(3),(4).
\end{remark}

\begin{PROOF}{\ref{b3}}
1) Read the definitions.

\noindent
2) First, assume $\ell =0$, so by the assumption for $k=1,2$ 
there are $m_k,n_k \in \bbN$ such that $M \models ``a_k + m_k = 
b_k + n_k"$.  Now let $m := m_1 + m_2 \in \bbN$ and $n :=
   n_1 + n_2 \in \bbN$ hence $M \models ``(a_1 + a_2) + (m_1 + m_2) =
   (b_1 + b_2) + (n_1 + n_2)"$ hence $(a_1 + a_2) E^0_M (b_1 + b_2)$
   as required.

Second, assume $\ell =2$, so by the assumption, for $k=1,2$ there is
$n_k \in \bbN$ such that $M \models ``a_k < n_k \times b_k \wedge b_k
< n_k \times a_k"$.  Let $n = \text{ max}\{n_1,n_2\} \in \bbN$ hence
$M \models ``(a_1 + a_2) < n_1 b_1 + n_2 b_2 \le nb_1 + nb_2 = 
n(b_1 + b_2)"$ and 
similarly $M \models ``(b_1 + b_2) < n(a_1 + a_2)"$ hence
$(a_1 + a_2) E^2_M (b_1 + b_2)$.

Third, assume $\ell=1,3$;  \wilog \, 
$a_1 <_M b_1$ and as $E_M^\ell$ is convex (see \ref{a7}(1))
\wilog \, $a_2 \le_M b_2$.
Letting $c_k$ witness $a_k E^\ell_M b_k$ for $k=1,2$ easily $c = \text{
max}\{c_1,c_2\}$ witness $(a_1 + a_2) E^\ell_M (b_1 + b_2)$.

Fourth, the case $\ell=4$ is easy, too.

Fifth, assume $\ell=5$ and $f_k$ is an order-automorphism of $M$
mapping $a_k$ to $b_k$ for $k=1,2$.  Define a function $f$ from $M$ to $M$ by
\mn
\begin{enumerate}
\item[$(*)$]  $(a) \quad f(x) = f_1(x)$ if $x <_M a_1$
\sn
\item[${{}}$]  $(b) \quad f(x) = b_1 + f_2(x-a_1)$ if $a_1 \le_M x$.
\end{enumerate}
\mn
Now check.

Sixth, assume $\ell=6$, let $c_k = \text{ min}\{a_k,b_k\}$; by
\ref{a7}(4) there is $d_k \ge \text{ max}\{a_k,b_k\}$ such that
$c_k E^5_M d_k$ so $a_k,b_k \in [c_k,d_k]$ for $k=1,2$.  But $(c_1
+ c_2) E^5_M(d_1 + d_2)$ by the result for $\ell=5$ and $a_1 + b_1,a_2
+ b_2 \in [c_1 + c_2,d_1 + d_2]$ so we are done.

\noindent
3) First, assume $\ell=2$ and for $k=1,2$ let $n_k$ witness $a_k E^2_M b_k$ and
   choose $n=n_1 n_2$ noting that $n_1,n_2 > 0$ by Definition
\ref{a5}(c).  Now $M \models ``a_1 \times a_2 < (n_1 \times
b_1) \times (n_2 \times b_2) = n \times (b_1 \times b_2)"$ and similarly
   $M \models (b_1 \times b_2) < n(a_1 \times a_2)$.

The proof for $\ell=3$ is easy, too.  For $\ell=4$ by the convexity of
$E^4_M$ \wilog \, $a_1 \le a_2,b_1 \le b_2$ and so there are $n,m \in \bbN$
such that $a_2 \le a^n_1,b_2 \le b^m_1$, so $a_1 \times b_1
\le a_2 \times b_2 \le a^n_1 \times b^m_1 \le (a_1 \times b_1)^{n+m}$ hence
$(a_1 \times b_1) E^4_M (a_2 \times b_2)$.

\noindent
4) For $\ell = 5$, as in the proof of \ref{a13}, i.e. if $c_k = a_k
   \times_M b_k$ for $k=1,2$ there is an order isomorphism $h_k$ from
   $M_{< a_k} \times M_{< b_k}$ onto $M_{< c_k}$ and let $f_k$ be an
   order automorphism of $M$ mapping $a_k$ to $b_k$.  Combining there
   is an order-isomorphism $g_1$ from $M_{< c_1}$ onto $M_{< c_2}$ and
   let $g$ be the order automorphism of $M$ such that $g$ extends $g_1$ and
$c_1 \le d \in M \Rightarrow g(d) = c_2 + f_1(d-c_1)$; so $g$
   witness $(a_1 \times b_1) E^5_M (a_2 \times b_2)$ as promised.  

For $\ell=6$ it follows in the proof of part (3).

\noindent
5) By the definition of $m$-o.r. and part (1).
\end{PROOF}

\begin{question}
\label{b7}
Is $E^5_M$ convex for every $M$?
\end{question}

\begin{claim}
\label{b9} 
If $a_1 E^3_M a_2$ \then \, 
there is an order-automorphism of $M$ mapping $a_1$ to $a_2$,
i.e. $a_1 E^5_M a_2$.
\end{claim}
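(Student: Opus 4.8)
The plan is to imitate the proof of \ref{a7}(2), but replacing the linear ``stretching'' trick (which gave $E^2_M \Rightarrow E^5_M$ via an affine map on a single $E^0_M$-block) with a multiplicative one adapted to the definition of $E^3_M$. So suppose $a_1 E^3_M a_2$, say with $a_1 <_M a_2$, and let $c$ witness this: $c^n <_M a_1, a_2$ for all standard $n$, and $a_2 <_M a_1 \times_M c$ (the other inequality being automatic from $a_1<a_2$). By \ref{a7}(1A) the set of $c$'s witnessing ``$\bigwedge_n c^n < a_i$'' is the same for $i=1,2$, is convex, and is closed under products; this is the analogue of the set $X/E_0$ used in \ref{a7}(2), and it is what we will move along.

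First I would reduce, exactly as in \ref{a7}(2), to a convenient normal form: working inside $M$ (a model of PA), pick $c$ as above and then find $n\in\bbN$ and an element realizing the division so that $M\models ``a_1 \times c^{n} \le a_2 < a_1 \times c^{n+1}"$ for a suitable small $n$ — in fact, since $a_2 < a_1\times c$, we may take the ``quotient'' $q$ with $M\models ``a_1 q \le a_2 < a_1 q + a_1"$ and note $q <_M c$ (if $q\ge c$ then $a_1 c \le a_1 q \le a_2$, contradicting $a_2<a_1c$). The idea is then to define an order-automorphism $f$ of $M$ that is the identity below (roughly) $a_1$ and multiplies by the ratio $a_2/a_1$ above it. Concretely, partition $M\setminus\bbN$ into the $E^3_M$-classes, or better, use the interval decomposition coming from $a_1$: every $x\ge a_1$ in $M$ can be written (in the sense of PA) as $x = a_1\times y + r$ with $0\le r <_M a_1$, and I would set $f(x) = a_2\times y + g(r)$ where $g$ is recursively the automorphism being built on $M_{<a_1}$ (base case: $g$ the identity / already constructed on the lower class), and $f\rest M_{<a_1}$ is handled by an inner instance of the same construction scaled down — this is the place where closure under products of the witness set (\ref{a7}(1A)) and convexity of $E^3_M$ (\ref{a7}(1)) are used, to see the pieces fit together into a well-defined order-preserving bijection. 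One checks $f(a_1)=a_2$ and that $f$ is an order-isomorphism of $M$ onto itself by the usual verification that it is monotone and onto on each block and respects the block ordering.

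An alternative, possibly cleaner route exploits the multiplicative/exponential transfer already set up in $\circledast_1$–$\circledast_3$ of the proof of \ref{a13}: there, with $f_M(x)=(2^x)^M$ and $M^*$ the induced structure on $\{(2^x)^M\}$, one has $a E^1_{M} b \Leftrightarrow f_M(a) E^3_{M^*} f_M(b)$. Via this correspondence, $E^3$ on the exponential copy is matched with $E^1$ downstairs, and $E^1_M \Rightarrow E^5_M$ should be provable by an affine stretching essentially identical to \ref{a7}(2) (the $E^1$ witness $c$ plays the role there played by the element called $c$, with ``$nc<a,b$'' replacing ``$a<b\times n$''). Pushing an order-automorphism through the isomorphism $f_M$ then yields an order-automorphism witnessing $E^5_M$ upstairs for $E^3_M$-related elements. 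I would likely present the direct construction as the main proof and mention this transfer as the conceptual reason it works.

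The main obstacle is the recursion/descent in the first approach: defining $f$ requires simultaneously defining it on $M_{<a_1}$, and $M_{<a_1}$ is not a model of PA, so one cannot literally invoke the claim inductively there. The fix is to carry out the construction not by induction on a nonexistent well-order but by choosing once and for all, using \ref{a7}(1A), a system of representatives for the relevant convex equivalence (the ``$c$-adic'' digit expansion), so that $f$ is defined by a single explicit formula on digit-strings rather than by recursion — the same device by which \ref{a7}(2) avoids recursion via the set $X$ of $E_0$-representatives. Verifying that this explicit $f$ is order-preserving across class boundaries (i.e. that the digit-lexicographic order on representatives agrees with $<_M$, which is where PA's properties of multiplication and remainder are needed) is the only genuinely technical point; everything else is bookkeeping.
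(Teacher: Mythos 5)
Your first construction has a genuine circularity that the vague ``$c$-adic digit expansion'' remark does not resolve. Writing $x = a_1 y + r$ with $r < a_1$ and setting $f(x) = a_2 y + g(r)$ is an order-bijection of $M$ only if $g$ is itself an order-isomorphism of $[0,a_1)_M$ onto $[0,a_2)_M$ --- that is, only if you already have essentially what you are trying to build; having $g$ ``built recursively on the lower class'' is exactly the ill-founded recursion you flag, and no choice of representatives by itself supplies such a $g$ by a single formula. The digit-shift idea also fails on its own terms: multiplication by $c$ is not onto, and $[c^n,c^{n+1})_M$ has strictly fewer elements than $[c^{n+1},c^{n+2})_M$, so no map of ``digit strings'' can be a bijection between them.

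The paper's construction avoids all of this and is structurally different from what you propose. After reducing (via $a_2\,E^2_M\,(c\times_M a_1)$ and \ref{a7}(2)) to the normal form $a_2 = c\times_M a_1$ with the witness $c$, it introduces the convex equivalence $E$ given by $xEy \Leftrightarrow |x-y|<c^n$ for some $n\in\bbN$ --- additive closeness measured in powers of $c$, not a base-$c$ expansion --- chooses representatives $X$ with $0,a_1,a_2\in X$, and defines $f$ to be the identity on $0/E$, to send each representative $y\ne 0$ to $c\times_M y$, and to act by translation inside each $E$-class. The one nontrivial point is $(*)_4$ there: $b_1 E b_2 \Leftrightarrow (c b_1) E (c b_2)$, which makes $y\mapsto cy$ a bijection on $E$-classes away from $0/E$ (any class $\ne 0/E$ equals $(c\lfloor z/c\rfloor)/E$ for each $z$ in it), giving surjectivity with no recursion and no scaling inside classes. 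Your second route via $x\mapsto 2^x$ is conceptually appealing, but $\circledast_3$ relates $E^1_M$ to $E^3_{M^*}$ where $M^*$'s multiplication is the one transported through $f_M$, which is not $E^3_M$ restricted to $\{2^x:x\in M\}$; closing that gap requires an argument you have not given, and in any case $E^1_M\Rightarrow E^5_M$ is already free since $E^1_M$ refines $E^2_M$ and \ref{a7}(2) applies.
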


\begin{PROOF}{\ref{b9}}

Without loss of generality $a_1 <_M a_2$.  If $a_1 E^2_M a_2$ then
 $a_1 E^5_M a_2$ by \ref{a7}(2), so
\wilog \, $\neg(a_1 E^2_M a_2)$ hence $n \times a_1 < a_2$ for $n \in \bbN$.
So by the definition of
 $E^3_M$ and the assumption $a_1 E^3_M a_2$, clearly for some $c \in M
 \backslash \bbN$ we have
\mn
\begin{enumerate}
\item[$(*)_1$]   $c <_M a_1,M \models ``c^n < a_1"$ for $n \in \bbN$
and $(c-1) \times_M a_1 <_M a_2 \le_M c \times_M a_1$.
\end{enumerate}
\mn
Clearly $a_2 E^2_M (c \times_M a_1)$ hence again by \ref{a7}(2)
\wilog \,
\mn
\begin{enumerate}
\item[$(*)_2$]   $M \models ``a_2 = c \times_M a_1"$.
\end{enumerate}
\mn
We define an equivalence relation $E$ on $M \backslash \bbN$:
\mn
\begin{enumerate}
\item[$(*)_3$]  $x Ey$ iff $\bigvee\limits_{n} |x-y| < c^n$.
\end{enumerate}
\mn
Clearly $E$ is a convex equivalence relation.
We choose a set $X$ of representatives for $E$ such that $0,a_1,a_2
\in X$, can be done as $0 + c^n = c^n < a_1$ and 
$c^n+ a_1 < 2 \times a_1 < a_2$ for any $n \in \bbN$.

Note
\mn
\begin{enumerate}
\item[$(*)_4$]  if $b_1,b_2 \in M_{\le a_1}$ then $(b_1 E b_2)
\Leftrightarrow (c \times b_1)E(c \times b_2)$.
\end{enumerate}
\mn
[Why?  As we have

\[
|(c \times b_2) - (c \times b_1)| = c \times (|b_2-b_1|)
\]
\mn
so for $n \in \bbN$:

\[
|(c \times b_2) - (c \times b_1)| < c^{n+1} \Leftrightarrow |b_2-b_1|
 < c^n
\]
\mn
so $(*)_4$ is true indeed.]

Now we define a function $f$ from $M$ into $M$ as follows:
\mn
\begin{enumerate}
\item[$(*)_5$]  $(a) \quad$ if $x \in 0/E$, i.e. $\bigvee\limits_{n}
x <_M c^n$ then $f(x)=x$
\sn
\item[${{}}$]  $(b) \quad$ if $y \in X$ and $y \ne 0$ 
then $f(y) = c \times y$
\sn
\item[${{}}$]  $(c) \quad$ if $x \in X \backslash (0/E) 
\wedge x \le_M y \in x/E$ then $f(y) = f(x) + (y-x)$
\sn
\item[${{}}$]  $(d) \quad$ if $x \in X \backslash (0/E) \wedge 
y <_M x \wedge y \in x/E$ then $f(y) = f(x) + (x-y)$
\end{enumerate}
\mn
Note that $f$ is well defined
and is one-to-one order preserving and onto $M$ by $(*)_4$. As $f(a_2)
= c \times a_1 = a_2$ we have $a_1 E a_2$ so we are done.
\end{PROOF}

Comparing with $(*)_7$ of the proof of \ref{a13}
\begin{observation}
\label{b11}
1) For any $a \in M \backslash \bbN$ we have:
\mn
\begin{enumerate}
\item[$(a)$]   the sequence $\langle \lfloor a^{1+2^{-n}} \rfloor:
n \in \bbN\rangle$, that is
$\langle \max\{b:b$ in $M,a$ divides $b$ and 
$(\lfloor b/a \rfloor)^{2^n} \le a\}:n \in \bbN \rangle$
is a decreasing sequence from $\{b: a' < b$ for every $a' \in 
a/E^3_M\}$ unbounded from below in it
\sn
\item[$(b)$]  the sequence $\langle \lfloor a^{1-2^{-n}} \rfloor:
n \in \bbN\rangle$, that is $\langle \max\{b:(\lfloor a/b
\rfloor)^{2^n} \le a\}:n \in \bbN\rangle$
is an increasing sequence included in $\{b:b < a'$ for every 
$a' \in a/E^3_M\}$ and unbounded from above in it.
\end{enumerate}
\mn
2) For $a \in M \backslash \bbN$ we have:
\mn
\begin{enumerate}
\item[$(a)$]   the sequence $\langle \lfloor (1+2^{-n})a \rfloor:
n \in \bbN\rangle$, is a decreasing sequence in $\{b \in M:b$ above
$a/E^1_M\}$ cofinal in it
\sn
\item[$(b)$]  the sequence $\langle \lfloor (1+2^{-n})a \rfloor: n 
\in \bbN\rangle$, is an increasing sequence in $\{b \in M: b$ below
$a/E^1_M\}$ cofinal in it.
\end{enumerate}
\end{observation}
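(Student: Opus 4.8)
The plan is, for each of $E^3_M$ and $E^1_M$, first to reduce the relation ``$b$ lies strictly above (resp.\ below) the class $a/E$'' to a purely arithmetic statement, and then to verify directly that the displayed sequences realize that statement and are co-initial (resp.\ cofinal) among the $b$'s realizing it. Since $E^3_M$ and $E^1_M$ are convex (\ref{a7}(1)), each class $a/E^\ell_M$ is an interval containing $a$, so for $b>_M a$ we have ``$b$ above $a/E^\ell_M$'' iff $\neg(a E^\ell_M b)$, and dually for $b<_M a$. Moreover, when $a<_M b$ the definition of $a E^3_M b$ (resp.\ $a E^1_M b$) simplifies: $a<_M b\times_M c$ (resp.\ $a<_M b+_M c$) is automatic and the clause $\bigwedge_n c^n<_M b$ (resp.\ $\bigwedge_n nc<_M b$) follows from $\bigwedge_n c^n<_M a$ (resp.\ $\bigwedge_n nc<_M a$); thus
\begin{gather*}
a E^3_M b \ \Leftrightarrow\ (\exists c)\Big[\,\bigwedge_{n} c^n <_M a\ \wedge\ b <_M a\times_M c\,\Big],\\
a E^1_M b \ \Leftrightarrow\ (\exists c)\Big[\,\bigwedge_{n} nc <_M a\ \wedge\ b <_M a+_M c\,\Big],
\end{gather*}
with $a$ and $b$ interchanged in the analogous equivalences when $b<_M a$.

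I then carry out $1(a)$ in detail; $1(b)$, $2(a)$, $2(b)$ are parallel, exchanging powers/products for multiples/sums in part $2$ and ``above/co-initial'' for ``below/cofinal'' in the (b)-clauses (via the dual reduction). Put $e_n=\max\{e:e^{2^n}\le a\}$, so the $n$-th term is $d_n=a\times_M e_n$ (this is the meaning of ``$a$ divides $b$ and $(\lfloor b/a\rfloor)^{2^n}\le a$''). As $a$ is non-standard, $e_n$ is non-standard (were $e_n=k\in\bbN$ we would get $a<(k+1)^{2^n}\in\bbN$), so $d_n\ge 2a>_M a$. To see $d_n$ lies above $a/E^3_M$: a witness $c$ to $a E^3_M d_n$ would satisfy $a\times c>_M d_n=a\times e_n$, hence $c>_M e_n$, hence $c^{2^n}\ge(e_n+1)^{2^n}>_M a$, contradicting $c^{2^n}<_M a$. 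The sequence is strictly decreasing since $e_{n+1}^{2^{n+1}}=(e_{n+1}^{2})^{2^n}\le a$ forces $e_{n+1}^{2}\le e_n$ by maximality of $e_n$, so $e_{n+1}<_M e_n$ and $d_{n+1}<_M d_n$.

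For co-initiality in $\{b:b\text{ above }a/E^3_M\}$: given such a $b$, put $q=\lfloor b/a\rfloor$, so $aq\le_M b<_M a(q+1)$ and $q\ge 2$ (if $q=1$ then $a<_M b<_M 2a$, i.e.\ $a E^2_M b$, so $b$ would lie in the class). Suppose toward a contradiction that $d_n>_M b$ for every $n$. Then $ae_n>_M b\ge_M aq$ gives $e_n>_M q$, i.e.\ $e_n\ge q+1$, whence $(q+1)^{2^n}\le e_n^{2^n}\le a$ for every standard $n$; since $\{2^n:n\in\bbN\}$ is cofinal in $\bbN$, this yields $(q+1)^m\le_M a$ for every standard $m$, in fact $(q+1)^m<_M a$ (otherwise $(q+1)^{m+1}=a(q+1)>_M a$ would also be $\le_M a$). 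Hence $c:=q+1$ satisfies $\bigwedge_m c^m<_M a$ and $b<_M a(q+1)=a\times_M c$, so $a E^3_M b$ by the reduction, contradicting that $b$ is above the class. Therefore some $d_n\le_M b$, and by strict monotonicity some term is $<_M b$, so $\langle d_n:n\in\bbN\rangle$ is unbounded from below in the set. The remaining monotonicity and density assertions are checked in the same way; for part $2$ the estimates only multiply $c$ by a standard integer instead of raising it to a power, so they are somewhat easier.

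The only genuine obstacle is getting the reduction exactly right and keeping the floor estimates honest --- in particular the passage from ``$c^{2^n}<_M a$ for all $n$'' to ``$c^m<_M a$ for all $m$'', which is the only place the cofinality of $\{2^n:n\in\bbN\}$ in $\bbN$ is used, and the strictness of the monotonicity claims, which uses that $a$ is non-standard so that the relevant ``$2^n$-th roots'' of $a$ are non-standard.
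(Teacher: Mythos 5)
Your proof is correct and supplies in full the routine verification that the paper compresses into the single word ``Straight.'' The key ingredients --- the reduction of $a E^3_M b$ (for $a<_M b$) to $(\exists c)[\bigwedge_n c^n<_M a \wedge b<_M ac]$, the observation that each $e_n=\max\{e:e^{2^n}\le a\}$ is non-standard because $a$ is, and the use of cofinality of $\{2^n\}$ in $\bbN$ to pass from the $d_n$'s to an arbitrary $b$ strictly above the class --- are exactly what a careful reading of Definition \ref{a5}(d) and Claim \ref{a7}(1) demands, and the estimates are honest. One small remark you might add for completeness: the paper's ``that is'' reformulations in parts 1(b) and 2(b) appear to carry typos (the inequality in 1(b) should read $(\lfloor a/b\rfloor)^{2^n}\ge a$ for the $\max$ to be meaningful, and 2(b) should have $1-2^{-n}$ rather than $1+2^{-n}$); this does not affect your argument, which works from the intended sequences $\lfloor a^{1\pm 2^{-n}}\rfloor$ and $\lfloor(1\pm 2^{-n})a\rfloor$, but it is worth flagging since you state the remaining clauses are ``parallel'' without spelling them out.
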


\begin{PROOF}{\ref{b11}}  
Straight.
\end{PROOF}

\begin{theorem}
\label{b13}
1) If $M_1$ is $3$-o.r. and 
$f$ is an order-isomorphism from $M_1$ onto $M_2$ \then
\, $f$ maps $E^k_{M_1}$ onto $E^k_{M_2}$ for $k=3,4$.

\noindent
2) In part (1), moreover $M_1 \rest \{<,+\},M_2 \rest \{<,+\}$ are
almost isomorphic.

\noindent
3) For any $M$ let $E^7_M = \{(a,b):(\lfloor \log_2(a)
\rfloor) E^4_{M_1}(\lfloor \log_2(b) \rfloor)\}$.
Assume there is an order-isomorphism $f$ from $M_1$ onto 
$M_2$ mapping $E^4_{M_1}$ to $E^4_{M_2}$, e.g. as in the conclusion 
of part (1) and $f$ maps $E^7_{M_1}$ onto $E^7_{M_2}$ \then \, $M_1 
\rest \{<,+\},M_2 \rest \{<,+\}$ are almost $\{<,+\}$ isomorphic.
\end{theorem}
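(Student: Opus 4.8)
The plan is to treat the three parts in order, reducing each to the ``warm-up'' strategy of the proof of \ref{a13} but now exponentiating twice, since $E^3_M$ behaves (via $x\mapsto 2^x$) like $E^1_M$, and $E^1_M$ behaves (via the same map) like a ``$+$-version'' of $E^0_M$. For part (1), I would first note that $f$ maps $E^0_{M_1}$ onto $E^0_{M_2}$ by \ref{a9}(1), since $E^0$ is order-definable as ``$a/E^0$ has a successor-chain neighborhood''. Then, reasoning exactly as in $(*)_3$--$(*)_6$ of the proof of \ref{a13} but with multiplication in place of addition and exponentiation in place of multiplication: given $a<_M b$ nonstandard with $M_2\models ``a\times c=c_2"$, the linear order $(M_{<c_2},<_M)$ is the lexicographic product $(M_{<a},<_M)\times(M_{<b},<_M)$ in the sense of PA, hence is order-isomorphic between $M_1$ and $M_2$; then ``$M_1$ is 3-o.r.'' forces $c_1\,E^3_{M_1}\,c_2$ where $M_1\models ``a\times c=c_1"$, giving the analog of $(*)_5$. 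Combined with the analogs of $(*)_4$ (using \ref{a7}(3) and 3-o.r.) and $(*)_0$ (convexity of $E^3$, from \ref{a7}(1)), one gets that $E^3_{M_1}=E^3_{M_2}$ as relations on the common universe, i.e. $f$ maps $E^3_{M_1}$ onto $E^3_{M_2}$; and then the $E^4$-statement follows since $a\,E^4_M\,b$ iff $a/E^3_M$ and $b/E^3_M$ are within a bounded (standard) ``$E^3$-power'' of each other — more precisely $a\,E^4_M\,b$ is equivalent to $\bigvee_n(a<b^n\wedge b<a^n)$, and each $b^n$ lies in the $E^3_M$-class determined purely by $b/E^3_M$ together with $n$ (by \ref{a7}(1A), the set of $c$ with $\bigwedge_n c^n<b$ is determined by $b/E^3_M$), so $E^4_M$ is definable from $E^3_M$ in the linear order.

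For part (2), I would mimic the displayed chain $\circledast_1$--$\circledast_6$ of the proof of \ref{a13}, one exponential higher. Let $g_\ell:M_\ell\to M_\ell$ be $g_\ell(x)=(2^{2^x})^{M_\ell}$ (or iterate $2^{(-)}$ once), and let $M^*_\ell$ be the push-forward model on $\mathrm{Rang}(g_\ell)$, so that $+_{M^*_\ell}$ corresponds to $\times_{M_\ell}$ and (one level down) to exponentiation. Using part (1), the order isomorphism respects $E^3$ and $E^4$, and hence — after passing through the ``$\log$'' twice — respects the relevant convex equivalence relations on the $M^*_\ell$. The point is that the ``error'' of $f$ on $+$, measured in $M_2$, is governed by which $E^0_{M_2}$-class a sum lands in, and the argument of $\circledast_6$ shows the two candidate sums $c_1,c_2$ satisfy $c_1\,E^0_{M_2^*}\,c_2$, i.e. differ by a standard amount. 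I would set this up by choosing, as in $\circledast_4$, an order isomorphism $h$ between the relevant ranges that is the identity on the standard part and maps $E^3$-classes to $E^3$-classes, then verify the analog of $\circledast_5$, and conclude that the candidate additive structures on $M$ agree up to a finite error, which is exactly almost $\{<,+\}$-isomorphism in the sense of \ref{a3}. Part (3) is then the same bookkeeping with the explicit relation $E^7_M$ playing the role of the ``$\log_2$ of $E^4$'' layer: assuming directly that $f$ respects both $E^4$ and $E^7$, one runs the $\circledast$-argument without needing 3-o.r., since those two hypotheses are precisely what the argument consumed.

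The main obstacle I anticipate is bookkeeping the ``two exponentials down'' correspondence cleanly: one must be careful that $\lfloor\log_2(-)\rfloor$ is only a partial inverse to $x\mapsto 2^x$ (it collapses an $E^0$-block to a point), so the equivalence relations do not transfer on the nose but only up to the coarsening by $E^0$; this is why part (1) must be proved for $E^3$ \emph{and} $E^4$ together (the $E^4$-layer absorbs exactly the slack introduced by the floor), and why part (3) needs $E^7$ as an extra hypothesis rather than deriving it. A second, more technical point is justifying that $(M_{<c},<_M)\cong(M_{<a},<_M)\times(M_{<b},<_M)$ lexicographically when $M\models ``c=a\times b"$, and the analogous identity for exponentiation $(M_{<c},<_M)\cong {}^{(M_{<b},<_M)}(M_{<a},<_M)$ ordered anti-lexicographically when $M\models ``c=a^b"$ — these are internal to PA (the relevant bijections are $\Sigma_1$-definable with $\Delta_0$ graphs), but one should remark on it since Theorem \ref{b13} really uses more of PA than \ref{a13} does, consistent with the remark at the end of \S1.
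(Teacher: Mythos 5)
Your overall plan is on the right track, but the argument for part (1) has a genuine gap in the order of deduction. You propose to establish $E^3_{M_1}=E^3_{M_2}$ first by mimicking $(*)_3$--$(*)_6$ of \ref{a13}, and then to derive $E^4$-agreement from it. But the ``$\neg$'' direction of the \ref{a13}-style argument does not go through for $E^3$ on its own: in \ref{a13} one picks $c=\lfloor b/a\rfloor$ in $M_2$ and the decisive fact is $c\notin\bbN$, which is order-absolute, whereas for $E^3$ the decisive fact becomes $c\notin I_a:=\{d:\bigwedge_n d^n<a\}$, and this set is a priori computed differently in $M_1$ and in $M_2$. The step ``$c\notin I^{M_2}_a$ hence $c\notin I^{M_1}_a$'' is exactly where the paper invokes $I^1_d=I^2_d$ (its $(*)_9$), which in turn is an immediate consequence of $E^4_{M_1}=E^4_{M_2}$ (its $(*)_7$). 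So the paper establishes $E^4$-agreement \emph{first} --- directly from the products statement ($M_\ell\models a_1\cdots a_n=b_\ell$ gives $b_1\,E^3_{M_1}\,b_2$, hence $b_1\,E^4_{M_1}\,b_2$) together with $E^3\subseteq E^4$ and convexity --- and only then deduces $E^3$-agreement. Your claim that ``$E^4_M$ is definable from $E^3_M$ in the linear order'' (citing \ref{a7}(1A)) is not correct as stated: \ref{a7}(1A) says $I_a$ depends only on $a/E^3_M$, but it does not say the assignment $a/E^3_M\mapsto I_a$ is recoverable from the $E^3$-partition and the order; computing $b^n$ still uses arithmetic. What is true, and close to what you want, is that once one has \emph{both} $E^3_{M_1}=E^3_{M_2}$ and the products statement, $E^4$-agreement follows; but that cannot be used to bootstrap, because $E^4$-agreement is needed to get $E^3$-agreement in the first place.

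Your description of parts (2) and (3) matches the paper in spirit: the paper abstracts the $\circledast$-chain from \ref{a13} into Lemma \ref{b15} and applies it with $f_k(x)=2^{2^x}$ and $E_k=E^4_{M_k}$, which is essentially your ``two exponentials'' picture. The anticipated need to justify an internal order-isomorphism $(M_{<c},<)\cong{}^{(M_{<b},<)}(M_{<a},<)$ when $M\models c=a^b$ does not in fact arise: only the product identity (Observation \ref{a21}) is used, so that worry can be set aside.
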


\begin{PROOF}{\ref{b13}}
1) By the assumption and by \ref{b9} respectively
\mn
\begin{enumerate}
\item[$(*)_0$]  $(a) \quad E^3_{M_1} \supseteq E^5_{M_1}$
\sn
\item[${{}}$]  $(b) \quad E^3_{M_\ell} \subseteq E^5_{M_\ell}$ for $\ell=1,2$.
\end{enumerate}
\mn
Easily by \ref{a7}(1)
\mn
\begin{enumerate}
\item[$(*)_1$]   each $E^3_{M_\ell}$-equivalence class is convex.
\end{enumerate}
\mn
Without loss of generality
\mn
\begin{enumerate}
\item[$(*)_2$]  $(a) \quad <_{M_1} = <_{M_2}$ hence
\sn
\item[${{}}$]  $(b) \quad M_1,M_2$ have the same universe and
\sn
\item[${{}}$]  $(c) \quad E^5_{M_1} = E^5_{M_2}$ so $E^3_{M_2}
\subseteq E^5_{M_2} \subseteq E^5_{M_1} = E^3_{M_1}$
\sn
\item[${{}}$]  $(d) \quad$ let $M := M_1 \rest \{<\} = M_2 \rest
\{<\}$ hence $E^5_{M_1} = E^5_M = E^5_{M_2} = E^3_{M_1}$.
\end{enumerate}
\mn
Also as usual
\mn
\begin{enumerate}
\item[$(*)_3$]  $\bbN \subseteq M_k$ for $k=1,2$.
\sn
\item[$(*)_4$]  if $a <_M b$ and $b \in M \backslash \Bbb N$ then $b,a
+_{M_1} b,a +_{M_2} b$ are $E^3_{M_1}$-equivalent.
\end{enumerate}
\mn
[Why?  By the definition of $E^3_M$.]
\mn
\begin{enumerate}
\item[$(*)_5$]   if $a \in M,b \in M \backslash \Bbb N$ 
and $a \times_{M_k} b = c_k$ for $k=1,2$ \then \, $c_1 E^3_{M_1}
c_2$ and $c_1 E^5_M c_2$.
\end{enumerate}
\mn
[By \ref{a21}(1) we have $c_1 E^5_M c_2$ and use $(*)_0(a)$ to deduce
$c_1 E^3_{M_1} c_2$.]
\mn
\begin{enumerate}
\item[$(*)_6$]   if $M_\ell \models ``a_1 \times a_2 \times
\ldots \times a_n = b_\ell"$
for $\ell=1,2$ then $b_1 E^3_{M_1} b_2$ and $b_1 E^5_M b_2$.
\end{enumerate}
\mn
[Why?  Similarly to $(*)_5$, i.e. by \ref{a21}(2).]
\mn
\begin{enumerate}
\item[$(*)_7$]   $E^4_{M_1} = E^4_{M_2}$.
\end{enumerate}
\mn
[Why?  Let $a,b \in M \backslash \bbN$ be given.  For $\ell=1,2$ and
$n \in \bbN$ let $a_{\ell,n}$ be
such that $M_\ell \models ``a^n = a_{\ell,n}"$.

First, assume $a E^4_{M_2} b$ and \wilog \, $a < b$.  So for some
$n \in \Bbb N$ we have $M_2 \models ``a < b < a^n"$ so $M_2 \models
``a < b < a_{2,n}"$.  Also $a_{1,n} E^3_{M_1} a_{2,n}$ by $(*)_6$
so by \ref{b3}(1) we have $a_{1,n} E^4_{M_1} a_{2,n}$ hence
for some $m,M_1 \models ``(a_{1,n})^m \ge a_{2,n}"$, in fact, even $m=2$
is O.K.
So $M_1 \models ``b < a_{1,mn}"$ but $a <_M b$, so together $a
E^4_{M_1} b$. 

Second, assume $\neg(a E^4_{M_2} b)$ and \wilog \, $a <_M b$.  So for
every $n \in \bbN,a_{2,n} <_M b$ and by $(*)_6$ we have $a_{2,n}
E^3_{M_1} a_{1,n}$ hence $a_{1,n}/E^3_{M_1}$ has a member $<b$, 
and so in particular $a_{1,n+1}/E^3_{M_1}$
has a member $<b$, but $a_{1,n}/E^3_{M_1}$ is below
$a_{1,n+1}/E^3_{M_1}$ (just think on the definitions) so $a_{1,n} <_M
b$.  As this holds for every $n \in \bbN$ we conclude $\neg(aE^4_{M_1} b)$.]

Let
\mn
\begin{enumerate} 
\item[$(*)_8$]  $I^k_d = \{c \in M:M_\ell \models ``c^n < d"$ for
every $n \in \bbN\}$ for $d \in M$ and $k=1,2$.
\end{enumerate}
\mn

Now
\mn
\begin{enumerate}
\item[$(*)_9$]  $I^1_d = I^2_d$ for $d \in M$.
\end{enumerate}
\mn
[Why?  By $(*)_7$ as $I^\ell_d = \{c:c/E^4_{M_\ell}$ is below $d\}$.]
\mn
\begin{enumerate}
\item[$(*)_{10}$]   $E^3_{M_2} = E^3_{M_1}$.
\end{enumerate}
\mn
[First, if $a_1 E^3_{M_2} a_2$ then by \ref{a7}(3),(4) we have 
$a_1 E^5_{M_2} a_2$
which, recalling $M_1 \rest \{<\} = M = M_2 \rest \{<\}$, 
is equivalent to $a_1 E^5_{M_1} a_2$ which implies (by $M_1$ being
$3$-o.r. which we are assuming) $a_1 E^3_{M_1} a_2$.

Second, assume $\neg(a_1 E^3_{M_2} a_2)$ and \wilog \, $a_1 <_M a_2$. 
As $M_2 \models \text{ PA}$, for some $c \in M$ we have $M_2 \models
``ca_1 \le a_2 < (c+1)a_2"$ so by the previous sentence
 $c \notin I^2_{a_1}$ hence by $(*)_9$ also
$c \notin I^1_{a_1}$.

By $(*)_5$ we have $(c \times_{M_1} a_1) E^3_{M_1} (c \times_{M_2} a_1)$ and as
$c \times_{M_2} a_1 \le_M a_2$ clearly $(c \times_{M_2} a_1)/E^3_{M_1} \le
a_2/E^3_{M_1}$ so together $(c \times _{M_1} a_1)/E^3_{M_1}$ is smaller or
equal to $a_2/E^3_{M_1}$ so for some $a_3 \in a_2/E^3_{M_1}$ we have
$c \times_{M_1} a_1 < a_3$.  As $c \notin I^1_{a_1}$ this implies $a_1,a_3$ are
not $E^3_{M_1}$-equivalent, so by the choice of $a_3$ also $\neg(a_1
E^3_{M_1} a_2)$, so we are done proving $(*)_{10}$.]

Hence by $(*)_7 + (*)_{10}$ part (1) holds.

\noindent
2),3)  By part (1) and \ref{b15} below for the function $x \mapsto
2^{2^x}$, and the equivalence relation $E^4_M$.
\end{PROOF}

\begin{claim}
\label{b15}
The models $M_1,M_2$ are almost $\{<,+\}$-isomorphic \when
\mn
\begin{enumerate}
\item[$(a)$]  $E_k$ is a convex equivalence relation on $M_k
\backslash \bbN$ for $k=1,2$
\sn 
\item[$(b)$]  $h$ is an order-isomorphism from $M_1$ onto $M_2$ mapping
$E_1$ onto $E_2$
\sn
\item[$(c)$]  $f_k$ is a function definable in $M_k$, is increasing,
maps $\bbN$ into $\bbN$, for each $E_k$-equivalence class $Y$ the set
$\{a \in M:f_k(a) \in Y\}$ has the order type of $\bbZ$ and is
unbounded from below and from above in $Y$, for $k=1,2$, of course
\sn
\item[$(d)$]  for $k=1,2$, if $a_1 E_k a_2$ and $b_1 E_k b_2$ then
$(a_1 + b_1)E_k(a_2 + b_2)$.
\end{enumerate}
\end{claim}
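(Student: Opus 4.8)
The plan is to build the almost $\{<,+\}$-isomorphism $g:M_1\to M_2$ by correcting the given order-isomorphism $h$ class by class, using the functions $f_1,f_2$ to pin down a canonical copy of $\mathbb{Z}$ inside each equivalence class and matching those copies up. First I would work inside a single pair of corresponding classes: fix an $E_1$-class $Y$ and let $Y' = h[Y]$, which is an $E_2$-class by (b). By (c), the ``trace'' set $A = \{a\in M_1 : f_1(a)\in Y\}$ has order type $\mathbb{Z}$ and is cofinal and coinitial in $Y$ (with respect to $<_{M_1}$), and similarly $A' = \{a'\in M_2 : f_2(a')\in Y'\}$ has order type $\mathbb{Z}$ and is cofinal and coinitial in $Y'$. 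Enumerate $A = \langle a_i : i\in\mathbb{Z}\rangle$ and $A' = \langle a'_i : i\in\mathbb{Z}\rangle$ in $<$-increasing order; on the standard part these enumerations agree with the identity because $f_k$ maps $\mathbb{N}$ into $\mathbb{N}$ and $\mathbb{N}\subseteq M_k$ is the bottom convex piece, so I can synchronize the indexing so that $a_i = a'_i = f_k(i)$-related elements for small $i$, i.e. the two $\mathbb{Z}$-chains are canonically identified. Define $g$ on $A$ by $g(a_i) = a'_i$, and extend to all of $Y$ by the ``obvious'' translation rule already used in \ref{a7}(2) and \ref{b9}: for $y\in Y$ with $a_i \le_M y$ and $y$ in the same $E_1$-class, choose the $<_M$-largest $a_i\le_M y$ (this exists since $A$ is coinitial and the $a_i$ have no $<_M$-limit below $y$ inside $Y$, as order type $\mathbb{Z}$), set $g(y) = a'_i +_{M_2}(y -_{M_1} a_i)$, and symmetrically below. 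Doing this on every class, and letting $g$ be the identity on $\mathbb{N}$, yields a well-defined bijection $M_1\to M_2$.

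Next I would verify $g$ is an order-isomorphism of $M_1\rest\{<\}$ onto $M_2\rest\{<\}$: within a class this is immediate since translation by a fixed $\mathbb{Z}$-offset is $<$-monotone, between consecutive $a_i$'s the blocks $[a_i,a_{i+1})$ go to $[a'_i,a'_{i+1})$ order-isomorphically (both are $M_k$-intervals of the ``same shape'' up to the finite shift), and the classes themselves are matched in order because $h$ matches $E_1$ with $E_2$ and each is convex by (a), so the class ordering of $M_1$ is carried to that of $M_2$ by $g$ just as it is by $h$. Then comes the additive check: I must show that for all $a,b\in M_1$ the $<_M$-distance between $g(a+_{M_1}b)$ and $g(a)+_{M_2}g(b)$ is a standard integer. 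By hypothesis (d), $(a+_{M_1}b)$ and $(a)+$ (anything $E_1$-related to $b$) stay in a single $E_1$-class as the first summand slides within its class, and more to the point: write $a = a_i + s$, $b = a_j + t$ for the relevant class-representatives with $s,t$ the offsets; then $a+_{M_1}b$ lies in the class of $a_i +_{M_1} a_j$ (by (d)), $g$ sends it to $a'_? +_{M_2}(\text{offset})$ where the representative index and offset are computed in $M_2$, while $g(a)+_{M_2}g(b) = (a'_i + s) +_{M_2} (a'_j + t)$ lands in the class $h[\,\text{class of }a_i+_{M_1}a_j\,]$ — the same class — and the two results differ only by the discrepancy between ``where $a_i+_{M_1}a_j$ sits in its $\mathbb{Z}$-chain $A$'' and ``where $a'_i+_{M_2}a'_j$ sits in its $\mathbb{Z}$-chain $A'$''. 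That discrepancy is exactly one integer offset, because both $\mathbb{Z}$-chains were canonically identified via $f_1,f_2$ and the chains are closed-ish under the relevant operation up to a bounded error; chasing this through gives a fixed finite $n$ depending only on the classes of $a,b$, hence the two values are at finite $<_M$-distance.

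The main obstacle I expect is the additive estimate in the previous paragraph — specifically, controlling how the $\mathbb{Z}$-indexing of the trace sets $A$ and $A'$ interacts with $+_{M_k}$. The subtle point is that $g$ was defined using $f_k$ only to locate a $\mathbb{Z}$-skeleton in each class, but nothing forces $g$ to respect $+_{M_k}$ even approximately unless the skeleton of a sum-class is determined, up to a finite shift, by the skeletons of the summand-classes; this is where hypothesis (d) (closure of $E_k$ under $+$) together with the ``order type $\mathbb{Z}$, unbounded both ways'' clause of (c) must be combined carefully — the $\mathbb{Z}$-structure guarantees that a finite error in choosing the representative $a_i$ (as opposed to $a_{i\pm 1}$) only perturbs the offset by a standard amount, and (d) guarantees we never land in the wrong class. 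Once this bookkeeping is set up, the verification is routine. In the intended applications (\ref{b13}(2),(3)), one takes $f_k(x) = (2^{2^x})^{M_k}$ and $E_k = E^4_{M_k}$, and checks (c) via \ref{b11}-style ``$2^{b}\le a<2^{b+1}$'' arguments inside $M_k$ and (d) via \ref{b3}(2); I would remark on this instantiation at the end.
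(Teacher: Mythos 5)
The main problem is in your ``extend by translation'' step, and it is not a bookkeeping issue but a structural one. You enumerate the skeleton $A=\{a\in M_1 : f_1(a)\in Y\}$ (or rather its image under $f_1$) as $\langle a_i : i\in\bbZ\rangle$ and send $[a_i,a_{i+1})$ onto $[a'_i,a'_{i+1})$ via $y\mapsto a'_i +_{M_2}(y-_{M_1}a_i)$. For this to be a bijection between the two intervals one needs $a_{i+1}-_{M_1}a_i = a'_{i+1}-_{M_2}a'_i$ (as elements of the common universe), and there is no reason whatsoever for that to hold: the gaps between consecutive skeleton points are nonstandard and vary from gap to gap, and nothing in hypotheses (a)--(d) synchronizes the gap-sizes in $M_1$ with those in $M_2$. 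In the intended application ($f_k(x)=2^{2^x}$, $E_k=E^4_{M_k}$) one has $a_{i+1}=a_i^2$ computed in $M_1$ while $a'_{i+1}=(a'_i)^2$ computed in $M_2$, so the gap-lengths $a_i(a_i-1)$ and $a'_i(a'_i-1)$ are wildly different. Your appeal to the ``obvious translation rule already used in \ref{a7}(2) and \ref{b9}'' is a category error: there the skeleton is a \emph{transversal} of a convex equivalence relation (one representative per class, so between consecutive skeleton points there is exactly one $\bbZ$-block, and all $\bbZ$-blocks are isomorphic by translation); here your skeleton is a $\bbZ$-chain \emph{inside} a single class $Y$, and the stretches of $Y$ between consecutive skeleton points are huge intervals of incomparable lengths. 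Your subsequent remark that moving from $a_i$ to $a_{i\pm 1}$ ``only perturbs the offset by a standard amount'' is wrong for the same reason: that shift is nonstandard.

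The paper's mechanism (this is what ``as in the proof of \ref{a13}'' is pointing at) avoids the problem entirely by never trying to fill in the gaps by hand. One conjugates through the definable increasing map: set $X_k=\operatorname{Rang}(f_k)$, note $f_k$ is an order-bijection of $M_k$ onto $X_k$, build an order-isomorphism $h^*:X_1\to X_2$ matching the $\bbZ$-chains $X_1\cap Y$ and $X_2\cap h[Y]$ (and fixing the standard part), and take $g:=f_2^{-1}\circ h^*\circ f_1$. Then $g$ is an order-isomorphism of $M_1$ onto $M_2$ with no extension step, and the entire content of the proof is the additive estimate for this $g$, which reduces (after transport of structure by $f_k$) to showing that the $\bbZ$-chains are shifted by a standard amount when one passes from the sum in $M_1^*$ to the sum in $M_2^*$; this is exactly what the $\circledast_6$-type computation in \ref{a13} does, using (d) to keep sums in the matched class. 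Your additive paragraph gestures at this but never produces a bound, and in any case it is built on top of a map $g$ that is not even a bijection. A smaller issue: you read (c) as saying $A$ is coinitial and cofinal in $Y$, but $A=f_1^{-1}[Y]$ is not a subset of $Y$; it is the \emph{image} $f_1[A]=X_1\cap Y$ that is coinitial and cofinal, and it is that image that supplies the $\bbZ$-chain $h^*$ must respect.
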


\begin{PROOF}{\ref{b15}}
As in the proof of \ref{a13}.
\end{PROOF}

\begin{claim}
\label{b21}
Assume $h$ is an order-isomorphism from $M_1$ onto $M_2$ and $M_1$ is
4-{\rm o.r.}  \Then \, for $a,b \in M$ we have $h(a)E^4_{M_2}h(b)
\Rightarrow a E^4_{M_1} b$.
\end{claim}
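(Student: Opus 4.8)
\emph{Proof strategy.} The plan is to run the argument of clause $(*)_7$ in the proof of Theorem \ref{b13}, noting that for the single implication asked here the weaker hypothesis ``$M_1$ is $4$-o.r.'' already suffices: in $(*)_7$ the $3$-o.r.\ assumption is used only through the inclusion $E^5_{M_1}\subseteq E^3_{M_1}$, whereas here the analogue $E^5_{M_1}\subseteq E^4_{M_1}$ is all that is required, and this is immediate from ``$M_1$ is $4$-o.r.'' together with Claim \ref{a7}(3) (which says $E^5_{M_1}$ is exactly the relation ``$M_{<x}\cong M_{<y}$ as linear orders'').

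First I would normalise the set-up. An order-isomorphism between models of $\PA$ fixes each standard element (the $(n{+}1)$-st element goes to the $(n{+}1)$-st element), so $h$ carries $M_1\setminus\bbN$ onto $M_2\setminus\bbN$; in particular the hypothesis has content only for $a,b\in M_1\setminus\bbN$, which we assume. Replacing $M_2$ by the isomorphic copy on the universe $|M_1|$ for which $h$ is a full $\PA$-isomorphism onto $M_2$, we may assume $M:=M_1\rest\{<\}=M_2\rest\{<\}$; then the hypothesis becomes $a\,E^4_{M_2}\,b$, the goal becomes $a\,E^4_{M_1}\,b$, and $M_1$ is still $4$-o.r. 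Moreover $E^5_{M_1}=E^5_{M_2}$, since $E^5$ refers only to the order-automorphisms.

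The core step is the following observation: for every nonstandard $d\in M$ and every $n\ge 1$, writing $c_k:=(d^n)^{M_k}$ for $k=1,2$, Claim \ref{a21}(2) (applied with $a_1=\cdots=a_n=d$) gives $c_1\,E^5_{M_1}\,c_2$, hence $c_1\,E^4_{M_1}\,c_2$ by $E^5_{M_1}\subseteq E^4_{M_1}$; thus
\[
(d^n)^{M_1}\ E^4_{M_1}\ (d^n)^{M_2}
\]
for all nonstandard $d\in M$ and all $n\ge 1$. Now suppose $a\,E^4_{M_2}\,b$; since this relation and the desired conclusion are symmetric we may assume $a<_M b$, and then $b<_M(a^n)^{M_2}$ for some $n\in\bbN$. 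By the displayed fact $(a^n)^{M_1}\,E^4_{M_1}\,(a^n)^{M_2}$, so by the definition of $E^4$ there is $n'\in\bbN$ with $(a^n)^{M_2}<_M\bigl((a^n)^{n'}\bigr)^{M_1}=(a^{nn'})^{M_1}$ (as $\PA$ proves $(x^n)^{n'}=x^{nn'}$). Hence $b<_M(a^{nn'})^{M_1}$, while also $a<_M b\le_M(b^{nn'})^{M_1}$ since $b$ is nonstandard and $nn'\ge 1$; therefore $a\,E^4_{M_1}\,b$, as required.

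The only point needing genuine care is the normalisation step and the bookkeeping of which model each power $x^n$ is computed in; the content is the single inclusion $E^5_{M_1}\subseteq E^4_{M_1}$ (from $4$-o.r.) together with Claim \ref{a21}(2) and the definition of $E^4$, so I do not expect a real obstacle.
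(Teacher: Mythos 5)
Your proof is correct, but it takes a different route from the paper. The paper's proof of \ref{b21} introduces an auxiliary purely order-theoretic relation
\[
E_M=\{(a,b): \exists n\in\bbN,\ \exists c_1\le_M\min\{a,b\},\ \exists c_2\ge_M\max\{a,b\}\text{ with }(M_{<c_1})^n\cong M_{<c_2}\},
\]
then observes that $E_M$ is a convex equivalence relation, that $E^4_{M_\ell}\subseteq E_M$ for $\ell=1,2$ (by the order-isomorphism $(M_{<a})^n\cong M_{<a^n}$ valid in each $M_\ell$), and that $E_M\subseteq E^4_{M_1}$ because $M_1$ is $4$-o.r.; the conclusion $E^4_{M_2}\subseteq E^4_{M_1}$ follows by sandwiching. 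You instead transport the ``first'' half of the proof of $(*)_7$ in Theorem \ref{b13} to the present setting, using Observation \ref{a21}(2) to get $(a^n)^{M_1}\,E^5_M\,(a^n)^{M_2}$ and then replacing the $3$-o.r.\ ingredient there with the weaker inclusion $E^5_{M_1}\subseteq E^4_{M_1}$ coming from $4$-o.r.\ and \ref{a7}(3); after that it is a direct computation with powers and the definition of $E^4$. Both approaches rest on the same two ingredients (the PA-provable isomorphism $(M_{<a})^n\cong M_{<a^n}$, and $4$-o.r.\ converting order-isomorphism back into $E^4_{M_1}$), but the paper packages them abstractly via the intermediate relation $E_M$, which gives the stronger statement $E^4_{M_1}=E_M$ in one stroke, while your version is more hands-on and spells out the arithmetic bookkeeping of which model each power is computed in. Your normalisation and the case $a<_Mb$ are handled correctly, and the final step verifying both conjuncts in the definition of $E^4_{M_1}$ (with witness $nn'$) is fine.
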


\begin{proof}
Without loss of generality $h$ is the identity and let $M_1 \rest
\{<\} = M = M_2 \rest$
\newline
$\{<\}$.  Define $E_M = \{(a,b)$: there are $n \in \bbN,c_1 \in M,
c_2 \in M$ such that $c_1 \le_M \text{ min}_M\{a,b\}$ and 
max$_M\{a,b\} \le c_2$ and $(M_{< c_1})^n \cong M_{< c_2}\}$.  
Easily $E_M$ is a convex equivalence relation,
$E^4_{M_\ell} \subseteq E_M$ for $\ell=1,2$ and $E^4_{M_1} = E_M$.
\end{proof}

\begin{claim}
\label{b23}
1) Assume $a \in M$ is non-standard and 
$I = \{b/E^3_M:b \in a/E^4_M\}$, naturally
ordered.  \Then \, the linear order $I$ can be embedded into
$\bbR_{> 0}$ with dense image.

\noindent
2) If $M$ is $\aleph_1$-saturated then the embedding is onto $\bbR_{>0}$.

\noindent
3) Moreover defining $+_I$ by $(b_1/E^3_M) +_I (b_2/E^3_M) =
   (b_3/E^3_M)$ when $b_1 \times_M b_2 = b_3$, the embedding commute
   with addition so the image is an additive sub-semi-group
 of $\bbR$.  Also $1_{\bbR}$ belongs to the image.
\end{claim}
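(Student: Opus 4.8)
The plan is to attach to each $b$ in the $E^4_M$-class of $a$ a positive real --- its ``exponent to base $a$'' --- and to show that this number is a complete order invariant for $E^3_M$ inside that class, converting $\times_M$ into addition of exponents. Fix $a\in M\setminus\bbN$ and for $b\in a/E^4_M$ put
\[
\rho(b)=\inf\{\,p/q:\ p,q\in\bbN,\ q\ge 1,\ M\models b^{q}<a^{p}\,\}.
\]
First I would check this is legitimate: since $b\,E^4_M\,a$, the set $C_b=\{p/q\in\bbQ^{>0}:M\models b^{q}<a^{p}\}$ is a nonempty, proper, upward closed subset of $\bbQ^{>0}$, so $\rho(b)$ is a well-defined real, $\rho(a)=1$, and $\rho(b)>0$ (from $a<_M b^{n}$ for some $n$). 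Then I record three routine facts obtained by manipulating the cuts $C_b$: (i) $b\le_M b'$ implies $\rho(b)\le\rho(b')$; (ii) $\rho(b_1\times_M b_2)=\rho(b_1)+\rho(b_2)$, and multiplying $b$ by a standard positive integer leaves $\rho(b)$ unchanged; (iii) $\rho(x)<\rho(y)$ implies $x<_M y$ (choose a rational $p/q$ strictly between $\rho(x)$ and $\rho(y)$, so that $M\models x^{q}<a^{p}\le y^{q}$).

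The heart of the proof is the equivalence $b\,E^3_M\,b'\iff\rho(b)=\rho(b')$, for $b,b'\in a/E^4_M$. For the forward direction, assume $b\le_M b'$ and let $c$ witness $b\,E^3_M\,b'$, so $c^{n}<_M b$ for all $n$ and $b'<_M b\times_M c$; raising the latter to a standard power $k$ and using $c^{k}<_M b$ gives $(b')^{k}<_M b^{k+1}$, hence $k\rho(b')\le (k+1)\rho(b)$ by (i)--(ii), and letting $k\to\infty$ yields $\rho(b')\le\rho(b)$, so equality by (i). For the converse, assume $\rho(b)=\rho(b')=r>0$ and $b<_M b'$, and set $c:=\lfloor b'/b\rfloor+2$. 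Then $b'<_M b\times_M c$ and $b<_M b'\times_M c$ are immediate, and for each standard $n$ one has $c\times_M b<_M 3b'$, hence $c^{n}\times_M b^{n}<_M 3^{n}(b')^{n}$; since $\rho(3^{n}(b')^{n})=nr<(n+1)r=\rho(b^{n+1})$, fact (iii) gives $3^{n}(b')^{n}<_M b^{n+1}$ and therefore $c^{n}<_M b$, so $c$ witnesses $b\,E^3_M\,b'$. Combining this equivalence with the convexity of $E^3_M$ (\ref{a7}(1)) and with (i), the assignment $b/E^3_M\mapsto\rho(b)$ is a well-defined order embedding of $I$ into $\bbR$, which is part (1). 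The delicate point here is precisely this converse direction: one must produce the witness $c$ and verify $c^{n}<_M b$ for \emph{every} standard $n$.

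For part (2) assume $M$ is $\aleph_1$-saturated; it suffices to show $\rho\rest I$ is onto $(0,\infty)$, for then, by part (1), $\rho$ is an order isomorphism of $I$ with $(0,\infty)\cong\bbR$. Given a real $s>0$, consider the countable type $p_s(x)$ over $\{a\}$ consisting of the formulas ``$n<x$'' for $n\in\bbN$, ``$a^{p}\le x^{q}$'' for $p/q\in\bbQ^{>0}$ with $p/q<s$, and ``$x^{q}<a^{p}$'' for $p/q\in\bbQ^{>0}$ with $p/q>s$. A finite subtype mentions only finitely many such rationals; picking a rational $p_0/q_0$ strictly between the lower ones and the upper ones, the (non-standard) element $\lfloor (a^{p_0})^{1/q_0}\rfloor$ of $M$ satisfies all the listed formulas by elementary $\PA$-estimates, so $p_s$ is finitely satisfiable in $M$ and hence realized by some $b\in M$. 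Then $b\,E^4_M\,a$ and $\inf C_b=s$, i.e. $\rho(b)=s$; this proves (2).

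For part (3), note first that $a\,E^4_M\,a^{2}$ (indeed $a<_M a^{6}$ and $a^{2}<_M a^{3}$), so $a^{k}/E^4_M=a/E^4_M$ for all $k\ge 1$; in particular $\times_M$ maps $a/E^4_M\times a/E^4_M$ into $a/E^4_M$ and, by \ref{b3}(3), descends to a well-defined commutative associative operation $+_I$ on $I$ --- exactly the one in the statement. By fact (ii), the order embedding $\rho:I\to(0,\infty)$ from part (1) satisfies $\rho(x+_I y)=\rho(x)+\rho(y)$, so it is a semigroup embedding of $(I,+_I)$ into $((0,\infty),+)$; composing with the inclusion of $(0,\infty)$ into the additive subgroup of $\bbR$ that it generates gives part (3). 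I expect everything except the converse half of the equivalence in the second paragraph to be routine --- the cut arithmetic behind (i)--(iii) and the omitting-types computation for part (2).
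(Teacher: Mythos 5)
Your proposal is correct, and it is essentially the approach the paper indicates when it says the proof is ``implicit in the proof in \S 3'': your $\rho(b)$ is exactly the ``$\log$ to base $a$'' invariant that appears there as $\xi(\cdot)$ (computed via $\bbR_M$ and $\bold j_M$ in Definition \ref{d1}), realised concretely via the cut $C_b\subseteq\bbQ^{>0}$. The paper gives no details, so what you have done is supply them, and the delicate point you isolate --- showing $\rho(b)=\rho(b')\Rightarrow b\,E^3_M\,b'$ by exhibiting the witness $c=\lfloor b'/b\rfloor+2$ and bounding $c^n$ via $c^n b^n <_M 3^n(b')^n <_M b^{n+1}$ --- is exactly the content of Discussion \ref{d34}(2)(d), that $\neg(a_*E^3 b_*)$ is faithfully captured by the $\xi$-difference. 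Two small points worth tightening: in (ii), the statement $\rho(b_1\times_M b_2)=\rho(b_1)+\rho(b_2)$ only makes sense when both arguments lie in $a/E^4_M$, and it is worth noting explicitly that $a/E^4_M$ is closed under $\times_M$ (as you do implicitly in part (3)); and in the $\aleph_1$-saturation argument, the verification that $\lfloor a^{p_0/q_0}\rfloor$ satisfies the ``lower'' formulas $a^{p_i}\le x^{q_i}$ uses that $\lfloor a^{p_0/q_0}\rfloor > a^{p_0/q_0}/2$ together with $a$ nonstandard to absorb the constant $2^{q_i}$; it would be cleaner to choose two rationals $p_1/q_1<p_2/q_2$ strictly between the finitely many lower and upper constraints and take $b=\min\{x:a^{p_1}\le x^{q_1}\}$, for which $a^{p_1}\le b^{q_1}$ is immediate and $b^{q_2}<a^{p_2}$ follows as above. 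Neither of these affects the correctness of the argument.
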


\begin{PROOF}{\ref{b25}}
As in Definition \ref{d1} but we elaborate.

Fix $a \in M$ and for $b \in a/E^4_M$ let $\cS_{a,b} =
\{\frac{m_1}{m_2}:m_1,m_2 \in \bbN \backslash \{0\}$ and $M \models
``b^{m_2} \ge a^{m_1}"\}$.  Clearly $S_b$ is a subset of $\bbQ_{>0}$ and
as $M$ is a model of $\PA$ clearly $S_b$ is an initial segment of
$\bbQ_{>0}$.  By the definition of ``$b \in a/E^4_M$ necessarily $S_b
\ne \emptyset$ and $S_b \ne \bbQ_{>0}$, so together $r_b = \sup(\cS_b)$ belongs
to $\bbR_{>0}$.  

Again by $\PA$
\mn
\begin{enumerate}
\item[$(a)$]  $r_{b_1} \le r_{b_2}$ when $b_1 \le_M b_2$ are
  from $a/E^4_M$.
\end{enumerate}
\mn
[Why?  See the definition of $\cS_{b_1},S_{b_2}$.]
\mn
\begin{enumerate}
\item[$(b)$]  $r_{b_1} = r_{b_1} \Leftrightarrow b_1 E^3_M b_2$ for
  $b_1,b_2 \in a/E^4_M$.
\end{enumerate}
\mn
[Why?  By the definition of $E^3_M$.]
\mn
\begin{enumerate}
\item[$(c)$]  if $\bbQ \models ``\frac{m_1}{m_2} < \frac{m_3}{m_4}"$
  where $m_\ell \in \bbN \backslash \{0\}$ for $\ell=1,2,3,4$ \then \, 
for some $b \in a/E^4_M$ we have $\bbR \models ``\frac{m_1}{m_2} \le r_b <
  \frac{m_3}{m_4}"$. 
\end{enumerate}
\mn
[Why?  Let $n \in \bbN \backslash \{0\}$ be such that $M \models ``b <
a^n"$, exists as $b \in a/E^4_M$.  \Wilog \, $m_2 = m_4$ cal it $m$, so
necessarily $m_1 < m_3$ and \wilog \, $m_1 + n < m_3$.  Now by the
definition of $b \mapsto r_b$ the demand on $b$ means that $M \models
``b^m \ge a^{m_1}$ and $b^m < a^{m_3}"$.  Let $b$ be the minimal
member of $M$ such that $M \models ``b^m \ge a^{m_1}"$ hence $M
\models ``b^{m-1} < a^{m_1}$ hence $b^m < a^{m_1} b \le a^{m_1} a^n =
a^{m_1+n} \le a^{m_3}"$ so $b$ is as required.]
\mn
\begin{enumerate}
\item[$(d)$]  $\{r_b:b \in a/E^4_M\}$ is a dense subset of $\bbR_{>0}$.
\end{enumerate}
\mn
[Why?  By (c).]
\mn
\begin{enumerate}
\item[$(e)$]  If $M$ is $\aleph_1$-saturated then $\{r_b:b \in
  a/E^4_M\} = \bbR_{>0}$, i.e. part (2).
\end{enumerate}
\mn
[Why?  For any real $r$ and $n$ we can find $b_1,b_2 \in a/E^4_M$ such
that $r - \frac 1n < r_{b_2} < r < r_{b_2} < \frac 1n$ by (d) and
``$\bbQ$ is dense in $\bbR$".]
\mn
\begin{enumerate}
\item[$(f)$]  part (3) of the claim holds.
\end{enumerate}
\mn
[Why?  By the rules of exponentiation which can be phrased in PA.]

Together we are done.
\end{PROOF}

\begin{remark}
\label{b25}
So the ``distance" between $E^3_M$ and $E^4_M$ is small.
\end{remark}
\newpage

\section {Constructing somewhat rigid models}

\begin{hypothesis}
\label{d0}
$\lambda$ is regular.
\end{hypothesis}

\begin{definition}
\label{d1}
For any $M$ (model of PA) 
\mn
\begin{enumerate}
\item[$(a)$]  let $\bbZ_M = \bbZ[M]$ be the ring $M$ generates 
(so $a \in \bbZ_M$
iff $a = b \vee a = -b$ for some $b \in M$, of course $\bbZ_M$ is
determined only up to isomorphism over $M$; similarly below); when, as
usual, $M$ is ordinary \wilog \, $\bbZ_M \supseteq \bbZ$
\sn
\item[$(b)$]  let $\bbQ_M = \bbQ[M]$ be the field of quotients of $\bbZ_M$; in
fact, it is an ordered field, if $M$ is ordinary then \wilog \, 
$\bbQ_M \supseteq \bbQ$
\sn
\item[$(c)$]  let $\bbR_M = \bbR[M]$ be the closure of $\bbQ_M$ adding all
definable cuts, so in particular it is a real closed field, see
\ref{d1f} below 
\sn
\item[$(d)$]  let $S_M = \bbR^{\text{bd}}_M/\bbR^{\text{infi}}_M$
where (bd stands for bounded, infi stands for infitesimal)
\sn
\begin{enumerate}
\item[$\bullet$]  $\bbR^{\text{bd}}_M =
\{a \in \bbR_M:\bbR_M \models -n < a < n$ for some $n \in \bbN\}$
\sn
\item[$\bullet$]  $\bbR^{\text{infi}}_M = \{a \in
\bbR^{\text{bd}}_M:\bbR_M \models ``-1/n < a < 1/n"$ for every $n \in
\bbN\}$
\sn
\item[$\bullet$]  $\bold j_M$ is the function from
$\bbR^{\text{bd}}_M$ into $\bbR$ such that $M \models ``n_1/m_1 < a <
n_2/m_2" \Rightarrow \bbR \models ``n_1/m_1 < \bold j_M(a) < n_2/m_2$
for $n_1,n_2,m_1,m_2 \in \bbZ"$ such that $m_1,m_2 > 0$.
\end{enumerate}
\end{enumerate}
\end{definition}

\begin{remark}
\label{d1f}
1) On \ref{d1}(d) see Claim \ref{b23}.

\noindent
2) Concerning \ref{d1}(c) note that $\bbR[M]$ is a sub-field of the
   Scott-Cauchy completion $\bbR[M]$ of $\bbQ[M]$ and that for so
   called ``rather classless" models $M,\bbR[M]$ coincide with
   $\bar{\bbR}[M]$.  See more on completions in \cite{Sh:757}.
\end{remark}

\begin{definition}
\label{d2}
Let AP = AP$_\lambda$ be the set of $\bold a$ such that
\mn
\begin{enumerate}
\item[$(a)$]  $\bold a = (M,\Gamma) = (M_{\bold a},\Gamma_{\bold a})$
but we may\footnote{alternatively, replace $M^0_b$ by 
$(M_{\bold a})_{< b}$ in the proof of \ref{d31} below.} write
 $M^{\bold a}_{< b}$ instead $(M_{\bold a})_{< b}$
\sn
\item[$(b)$]  $M$ is a model of PA
\sn
\item[$(c)$]  $|M|$, the universe of $M$, is an ordinal $< \lambda^+$
\sn
\item[$(d)$]  $\Gamma$ is a set of $\le \lambda$ of types over $M$
\sn
\item[$(e)$]  each $p \in \Gamma$ has the form $\{a_{p,\alpha} < x <
b_{p,\alpha}:\alpha < \lambda\}$ where $\alpha < \beta \Rightarrow M
\models a_{p,\alpha} < a_{p,\beta} < b_{p,\beta} < b_{p,\alpha}$
\sn
\item[$(f)$]  $M$ omits every $p \in \Gamma$.
\end{enumerate}
\end{definition}

\begin{definition}
\label{d4}
1) $\le_{\AP}$ is the following two-place relation on AP:

$\bold a \le_{\AP} \bold b$ \Iff \, $M_{\bold a} \prec M_{\bold
b}$ and $\Gamma_{\bold a} \subseteq \Gamma_{\bold b}$.

\noindent
2) Let AP$_T = \{\bold a \in \text{ AP}:M_{\bold a}$ is a model of
   $T\}$.

\noindent
3) Let AP$^{\sat} =\{\bold a \in \AP:M_{\bold a}$ is
   saturated$\}$ and AP$^{\sat}_T = \AP_T \cap \AP^{\sat}$. 
\end{definition}

\begin{claim}
\label{d5}
1) $\le_{\AP}$ is a partial order of {\rm AP}.

\noindent
2) If $\langle \bold a_\alpha:\alpha < \delta\rangle$ is 
   $\le_{\AP}$-increasing, $\delta$ a limit ordinal 
$< \lambda^+$ \then \, $\bold a_\delta = \cup\{\bold a_\alpha:\alpha <
   \delta\}$ defined by $M_{\bold a_\delta} = \cup\{M_{\bold
   a_\alpha}:\alpha < \delta\},\Gamma_{\bold a_\delta} =
 \cup\{\Gamma_{\bold a_\alpha}:\alpha < \delta\}$, is a
   $\le_{\AP}$-{\rm lub} of $\langle \bold a_\alpha:\alpha < \delta\rangle$.

\noindent
3) Assume $\lambda = \lambda^{< \lambda} > \aleph_0$.  
If $\bold a \in \AP$ \then \, there is $\bold b$ such that
$\bold a \le_{\AP} \bold b$ and $M_{\bold b}$ is saturated (of
cardinality $\lambda$).
\end{claim}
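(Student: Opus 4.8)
The plan is to dispatch (1) and (2) as routine facts about elementary chains, putting the real work into (3). For (1): reflexivity and transitivity of $\le_{\AP}$ are inherited from those of $\prec$ and $\subseteq$; antisymmetry holds since $M_{\bold a}\prec M_{\bold b}\prec M_{\bold a}$ forces $|M_{\bold a}|=|M_{\bold b}|$ and hence $M_{\bold a}=M_{\bold b}$, together with $\Gamma_{\bold a}\subseteq\Gamma_{\bold b}\subseteq\Gamma_{\bold a}$. For (2): $M_{\bold a_\delta}=\bigcup_{\alpha<\delta}M_{\bold a_\alpha}$ is a union of an elementary chain of models of PA, hence (by Tarski--Vaught) a model of PA with $M_{\bold a_\alpha}\prec M_{\bold a_\delta}$ for all $\alpha$; its universe is an increasing union of ordinals, hence an ordinal, of cardinality $\le|\delta|\cdot\lambda=\lambda<\lambda^+$, and $\Gamma_{\bold a_\delta}=\bigcup_{\alpha<\delta}\Gamma_{\bold a_\alpha}$ has $\le\lambda$ members, each still of the form demanded by \ref{d2}(e). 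The only clause needing an argument is that $M_{\bold a_\delta}$ omits every $p\in\Gamma_{\bold a_\delta}$: if $p\in\Gamma_{\bold a_\alpha}$ and some $e\in M_{\bold a_\delta}$ realised $p$, pick $\beta\in[\alpha,\delta)$ with $e\in M_{\bold a_\beta}$; then $e$ and all parameters of $p$ lie in $M_{\bold a_\beta}\prec M_{\bold a_\delta}$, so $e$ realises $p$ already in $M_{\bold a_\beta}$, contradicting $\bold a_\beta\in\AP$ (here $p\in\Gamma_{\bold a_\beta}$ as $\Gamma_{\bold a_\alpha}\subseteq\Gamma_{\bold a_\beta}$). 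Thus $\bold a_\delta\in\AP$; it is an upper bound of the chain, and if $\bold b$ is any upper bound then $M_{\bold a_\delta}\prec M_{\bold b}$ (Tarski--Vaught again, each finite tuple from $M_{\bold a_\delta}$ lying in some $M_{\bold a_\alpha}$) and $\Gamma_{\bold a_\delta}\subseteq\Gamma_{\bold b}$, so $\bold a_\delta\le_{\AP}\bold b$; hence $\bold a_\delta$ is the $\le_{\AP}$-lub.

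For (3), keep $\Gamma$ fixed: it suffices to produce $M_{\bold b}\succ M_{\bold a}$ of cardinality $\lambda$ that is saturated and omits every $p\in\Gamma_{\bold a}$, for then $\bold b=(M_{\bold b},\Gamma_{\bold a})\in\AP$ and $\bold a\le_{\AP}\bold b$. I would build a continuous elementary chain $\langle M_i:i\le\lambda\rangle$ with $M_0=M_{\bold a}$, $|M_i|\le\lambda$, each $M_i$ a model of PA omitting every $p\in\Gamma_{\bold a}$, taking unions at limits (licit by part (2), which in particular keeps the limit omitting all the $p$). Using $\lambda=\lambda^{<\lambda}$ — so $2^{<\lambda}=\lambda$ and $[M_i]^{<\lambda}$ has size $\le\lambda$ at each stage — there are only $\le\lambda$ tasks ``realise the complete type $q$ over $A$'' with $A\in[M_i]^{<\lambda}$, $i<\lambda$, so a standard bookkeeping arranges that all of them get carried out; since $\cf(\lambda)=\lambda$, every subset of $M_\lambda$ of size $<\lambda$ sits inside some $M_i$, so $M_\lambda$ is $\lambda$-saturated. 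Finally $|M_\lambda|=\lambda$: it is $\le\lambda$ by construction, and $\ge\lambda$ because a model of size $<\lambda$ omits $\{x\ne a:a\in M_\lambda\}$ and so is not $\lambda$-saturated. Set $M_{\bold b}:=M_\lambda$.

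The real content is the successor step: given $M_i\models\mathrm{PA}$ omitting every $p\in\Gamma_{\bold a}$ and a complete type $q$ over some $A\in[M_i]^{<\lambda}$, produce $M_{i+1}\succ M_i$ of size $\le\lambda$ realising $q$ and still omitting every $p\in\Gamma_{\bold a}$. Two observations turn this into an omitting-types problem. First, each $p\in\Gamma_{\bold a}$ is non-isolated over the elementary diagram $\Th(M_i,M_i)$, since any $L_{M_i}$-formula consistent with that diagram is realised inside $M_i$, hence by an element not realising $p$ (as $M_i$ omits $p$), so no formula entails $p$. Second, as $|A|<\lambda=\cf(\lambda)$ and $p$ is a strictly nested $\lambda$-chain of intervals, $A$ is bounded away from the ``gap'' of $p$: splitting $A$ by which side of the gap each element falls on (using that $M_i$ omits $p$) and taking the sup of the witnessing indices gives $\alpha^*_p<\lambda$ with every element of $A$ either $<a_{p,\alpha^*_p}$ or $>b_{p,\alpha^*_p}$; so $q$ has a completion $q^*\in S(M_i)$ whose cut of $M_i$ is the gap of no $p\in\Gamma_{\bold a}$, and with each $p$ still non-isolated over $\Th(M_i,M_i)\cup q^*(c)$. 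A Henkin construction for $\Th(M_i,M_i)\cup q^*(c)$ that simultaneously omits the $\le\lambda$ types in $\Gamma_{\bold a}$ — valid precisely because $\lambda=\lambda^{<\lambda}$ — then yields $M_{i+1}$ of size $\le\lambda$ with the desired properties (take the reduct to $L$, identify $M_i$ with its image; $q$ is realised by the interpretation of $c$). I expect this successor step to be the one genuine obstacle — in particular, choosing $q^*$ so that realising it does not re-introduce, directly or through the Henkin/Skolem closure, any of the $\lambda$-many long types $p\in\Gamma_{\bold a}$, and running the $\lambda$-omitting-types construction within cardinality $\lambda$; that is exactly where the hypothesis $\lambda=\lambda^{<\lambda}$ is used.
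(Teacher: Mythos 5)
Parts (1) and (2) are correct and routine (the paper itself dismisses the whole claim as ``Easy''). For (3) your reduction to the successor step is right, and so is the implicit observation that, with PA's definable Skolem functions, $M_{i+1}$ can be taken one-generated over $M_i$, so everything comes down to choosing the complete type $q^*$ of the new element $c$ over $M_i$ so that no $\sigma^{M_i}(c)$, $\sigma$ an $M_i$-definable function, falls into the gap of any $p\in\Gamma_{\bold a}$.

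However, the non-isolation you actually verify is the finitary one — no single $L_{M_i}$-formula forces $c$ into the gap of $p$ — and that is too weak to run the $\lambda$-omitting-types construction you invoke: at a stage of that construction you are holding a partial type $\Phi$ of some infinite size $<\lambda$, not a single formula, and the omitting-types theorem for $\lambda$ types over a language of size $\lambda$ is false in general at that level of generality. What your plan really needs is the $\lambda$-non-isolation statement: for every consistent partial type $\Phi$ over $M_i$ with $|\Phi|<\lambda$, every $M_i$-definable $\sigma$ and every $p\in\Gamma_{\bold a}$, there is $\alpha<\lambda$ with $\Phi\cup\{\,\neg(a_{p,\alpha}<\sigma(x)<b_{p,\alpha})\,\}$ consistent with $\Th(M_i,M_i)$. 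This does hold here, but only because of the nested-interval shape of $p$ together with regularity of $\lambda$: if it failed, compactness gives for each $\alpha$ a finite $\Phi_\alpha\subseteq\Phi$ with $\Th(M_i,M_i)\cup\Phi_\alpha\vdash a_{p,\alpha}<\sigma(x)<b_{p,\alpha}$; there are $<\lambda$ finite subsets of $\Phi$ and $\lambda$ many $\alpha$'s, so by regularity one finite $\Phi^*$ works for cofinally many $\alpha$ and hence (by nesting) for all $\alpha$; a realization $d\in M_i$ of $\bigwedge\Phi^*$ then makes $\sigma^{M_i}(d)$ realize $p$ in $M_i$, contradicting that $M_i$ omits $p$. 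That pigeonhole lemma is the missing ingredient — and it, not merely ``$[M_i]^{<\lambda}$ is small'', is where regularity (hence $\lambda=\lambda^{<\lambda}$) is genuinely used. One further small correction: once $q^*\in S(M_i)$ is fixed and $M_{i+1}$ is taken to be the Skolem hull of $M_i\cup\{c\}$, there is no residual Henkin freedom, so the omission must be secured entirely inside the $\lambda$-length construction of $q^*$ itself, handling all $\lambda$ pairs $(\sigma,p)$ via the lemma above (and $\lambda$ completeness tasks), rather than by a separate Henkin pass afterwards.
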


\begin{PROOF}{\ref{d5}}
Easy.
\end{PROOF}

\begin{mc}
\label{d31}
1) If $(A)$ then $(B)$ where:
\mn
\begin{enumerate}
\item[$(A)$]  $(a) \quad \lambda = \aleph_0,\bold a \in \AP$
\sn
\item[${{}}$]  $(b) \quad M_{\bold a} \models ``a_* > 
b_* > n"$ for $n \in \bbN$ and $a_*,b_*$ are not
$E^3_{M_{\bold a}}$-equivalent
\sn
\item[${{}}$]  $(c) \quad F$ is an order isomorphism from 
$M^{\bold a}_{< a_*}$ onto $M^{\bold a}_{<b_*}$
\sn
\item[$(B)$]   there are $\bold b,c_*$ satisfying
\begin{enumerate}
\item[$(a)$]  $\bold a \le_{\AP} \bold b$
\sn
\item[$(b)$]  $c_* <_{M_{\bold b}} a_*$ so $c_* \in M_{\bold b}$ but
$c_* \notin M_{\bold a}$
\sn
\item[$(c)$]   some $p \in \Gamma_{\bold b}$ is equivalent to
$\{F(a_1) < x < F(a_2):a_1,a_2 \in M_{\bold a}$ and
$M_{\bold b} \models a_1 < c_* < a_2 \le a_*\}$
  recalling $F$ is the isomorphism from (A)(c).
\end{enumerate}
\end{enumerate}
\end{mc}

\begin{remark}
\label{d32}
1) We use $a_*,b_*,c_*$ as they are constant during the proof of
   \ref{d31}, and we like to let $a,a_i$, etc. be free to denote other
   things.

\noindent
2) Below in the Discussion \ref{d34} we try to explain the proof of
   \ref{d31}; of course, it cannot be fully digested per se,
   \underline{but} the reader can try to look at it from time to time, 
particularly when you lose track in the proof, hopefully this will help.
\end{remark}

\begin{PROOF}{\ref{d31}}

\noindent
\underline{Stage A}:

Let 
\mn
\begin{enumerate}
\item[$\boxplus_1$]   $(a) \quad \Phi = \Phi_{\bold a}$ is the set of
formulas $\varphi(x) = \varphi(x,\bar a)$ with $\varphi(x,\bar y) \in
\bbL(\tau_{\text{PA}})$ 

\hskip25pt and $\bar a \in {}^{\ell g(\bar y)}(M_{\bold a})$
\sn
\item[${{}}$]  $(b) \quad \varphi'(x) \vdash \varphi''(x)$ means
both are from $\Phi$ and 

\hskip25pt $M_{\bold a} \models ``(\forall x)(\varphi'(x)
\rightarrow \varphi''(x)"$
\sn
\item[${{}}$]  $(c) \quad \varphi(M_{\bold a})= \varphi(M_{\bold
 a},\bar a)  = \{b \in M_{\bold a}:
M_{\bold a} \models \varphi[b,\bar a]\}$ if $\varphi = \varphi(x) =$

\hskip25pt $\varphi(x,\bar a) \in \Phi$
\sn
\item[${{}}$]  $(d) \quad$ we define $\Sigma^k_{\bold a} = 
\Sigma^k_{M_{\bold a}}$ as the set of

\hskip25pt $\sigma(x_0,\dotsc,x_{k-1})= \sigma(x_0,\dotsc,x_{k-1},\bar
a)$ satisfying

\hskip25pt $\sigma(\bar x,\bar y)$ is a definable function
in $M_{\bold a}$ and $\bar a \in {}^{\ell g(\bar y)}(M_{\bold a})$

\hskip25pt and $k \in \bbN$, we may omit $k$ when it is 1 and so may
write $\sigma(x,\bar y)$ 

\hskip25pt and $\sigma(x)$
\sn
\item[${{}}$]  $(e) \quad$ let\footnote{the $|\varphi(M_{\bold a})|$
and log$_2$ has natural meanings; of course we can translate it to a
formula in $\bbL(\tau_{\PA})$.}
$\xi(\varphi) = \xi(\varphi(x)) = \bold
j_M(\log_2(|\varphi(M_{\bold a})|)/\log(a_*))$ for $\varphi = \varphi(x) \in 
\Phi_{\bold a}$ 

\hskip25pt such that $\varphi(M) \subseteq [0,a_*)_M$, see \ref{d1}(d)
\sn
\item[${{}}$]  $(f) \quad$ if $\varphi_1,\varphi_2 \in
  \Phi,\varphi_2(M) \ne \emptyset$ and $\sigma \in \Sigma_{\bold a}$
  \then\footnote{So $\sigma'$ is $\sigma$ restricting the domain to
  $\varphi_1(M)$ and rounding the image to be in $\varphi_2(M)
  \cup \{0\}$.}  \, let $\sigma' := \sigma[\varphi_1,\varphi_2]$ be

\hskip25pt  the following function from $\varphi_1(M)$ to 
$\varphi_2(M) \cup \{0\}$, definable in 

\hskip25pt $M:M \models \sigma'(a)=b$ iff $a \in \varphi_1(M)$ and
  $b = \max\{b$: either $b=0$ 

\hskip25pt  and $\sigma(a) < \min(\varphi_2(M))$
 \underline{or} $b \in \varphi_2(M)$ and $b \le \sigma(a)\}$.
\end{enumerate}
\mn
We define $\bbP$, it will serve as a set of approximations to 
tp$(c_*,M_{\bold a},M_{\bold b})$, as follows:
\mn
\begin{enumerate}
\item[$\boxplus_2$]   the quasi-order $\bbP$ is defined by:
\begin{enumerate}
\item[$(a)$]  a member of $\bbP$ is a pair $\bar\varphi = 
(\varphi_1,\varphi_2)$ such that:
\sn
\item[${{}}$]  $(\alpha) \quad \varphi_\ell = \varphi_\ell(x)$ are
from $\Phi$
\sn
\item[${{}}$]  $(\beta) \quad \varphi_1(x) \vdash x < a_*$ and $\varphi_2(x)
\vdash x < b_*$
\sn
\item[${{}}$]  $(\gamma) \quad$ if $a_1 < a_2$ are from
$\varphi_1(M_{\bold a})$ then $[F(a_1),F(a_2)]_M \cap \varphi_2(M) \ne
\emptyset$
\sn
\item[${{}}$]  $(\delta) \quad \xi(\varphi_1(M)) > \xi(\varphi_2(M))$
\sn
\item[$(b)$]  $\bbP \models \bar\varphi' \le \bar\varphi''$ \Iff \,
$\varphi''_\ell(x) \vdash \varphi'_\ell(x)$,  for $\ell = 1,2$.
\end{enumerate}
\end{enumerate}
\mn
Note that $(\varphi_1(x,\bar a),\varphi_2(x,\bar a_2)) \in \bbP$ is
not definable in $M_{\bold a}$ mainly because of $(a)(\gamma)$ of $\boxplus_2$.

Obviously observe
\mn
\begin{enumerate}
\item[$\boxplus_3$]  if $\bar\varphi \in \bbP$ and $\varphi(x) \in
\Phi$ then for some $\bold t \in \{0,1\}$ we have $(\varphi_1(x) \wedge
\varphi(x)^{\bold t},\varphi_2(x)) \in \bbP$ (and is
$\le_{\bbP}$-above $\bar\varphi$; recall that $\varphi^{\bold t}$ is
$\varphi$ is $\bold t=1$ and is $\neg \varphi$ if $\bold t=0$).
\end{enumerate}
\mn
[Why?  $M_{\bold a} \models ``|\varphi_1(M) \cap \varphi(M)| \ge
\frac{1}{2}|\varphi_1(M)|$ or $M_{\bold a} \models ``|\varphi_1(M)
\backslash \varphi(M)| \ge \frac{1}{2}|\varphi_2(M)|"$.

As $a_* \notin \bbN$ clearly $\xi(\varphi_1(x) \wedge \varphi(x)) =
\xi(\varphi_2(x))$ or $\xi(\varphi_2(x) \wedge \neg \varphi(x)) =
\xi(\varphi_2(x))$.  So clearly we are done proving $\boxplus_3$.]
\bigskip

\noindent
\underline{Stage B}:

We arrive at a major point: how to continue to omit members of
$\Gamma_{\bold a}$
\mn
\begin{enumerate}
\item[$\boxplus_4$]  if $\bar\varphi \in \bbP,\sigma(x) \in
\Sigma_{\bold a}$ and $p(x) \in \Gamma_{\bold a}$ \then \, for some
$\bar\varphi'$ and $n$ we have
\begin{enumerate}
\item[$(a)$]  $\bar\varphi \le \bar\varphi' \in \bbP$
\sn
\item[$(b)$]  $\varphi'_0(x) \vdash ``\sigma(x) \notin
(a_{p,n},b_{p,n})"$.
\end{enumerate}
\end{enumerate}
\mn
The rest of this stage is dedicated to proving this.  We use a ``wedge
question". 
\bigskip

\noindent
\underline{Case 1}:  There is $d \in M_{\bold a}$ such that
$\bar\varphi' := (\varphi_1(x) \wedge \sigma(x) = d,\varphi_2(x)) \in
\bbP$.

In this case obviously $\bar\varphi \le \bar\varphi' \in \bbP$.  Also
as $d \in M_{\bold a}$ and $M_{\bold a}$ omit $p(x)$ recalling $p(x)
\in \Gamma_{\bold a}$, clearly $d$ does not realize $p(x)$ hence for
some $n,d \notin (a_{p,n},b_{p,n})_{M_{\bold a}}$; so $\bar\varphi',n$
are as promised.
\bigskip

\noindent
\underline{Case 2}:  Not case 1.

So
\mn
\begin{enumerate}
\item[$(*)_{4.1}$]   $\xi(\varphi(x) \wedge \sigma(x)=d) \le
\xi(\varphi_2(x))$ for every $d$ from $M_{\bold a}$.
\end{enumerate}
\mn
Clearly there is a minimal $d_* \in M_{\bold a}$ satisfying
\mn
\begin{enumerate}
\item[$(*)_{4.1}$]   $M_{\bold a} \models ``|\{c \in
\varphi_1(M):\sigma(c) \le d_*\}| \ge \frac{1}{2}|\varphi_1(M_{\bold a})|"$.
\end{enumerate}
\mn
So
\mn
\begin{enumerate}
\item[$(*)_{4.2}$]   $M_{\bold a} \models ``|\{c \in
\varphi_1(M):\sigma(c) \ge d_*\}| \ge \frac{1}{2}|\varphi_2(M_{\bold a})|"$.
\end{enumerate}
\mn
But $M_{\bold a}$ omits the type $p(x)$ as $p(x) \in \Gamma_{\bold a}$
and $d_* \in M_{\bold a}$, so for some $n,d_* \notin
(a_{p,n},b_{p,n})$.

So one of the following sub-cases occurs:

\noindent
\underline{Sub-case 2a}:  $d_* \le a_{p,n}$.

Let $\varphi'_1(x) = \varphi_1(x) \wedge (\sigma(x) \le a_{p,n+1})$ and
$\varphi'_2(x) = \varphi_2(x)$.  Now the pair $\bar\varphi' =
(\varphi'_1,\varphi'_2)$ is as required, noting (by $(*)_{4.1}$) that

\[
M_{\bold a} \models ``|\varphi'_1(M)| \ge |\{a \in
\varphi_1(M):\sigma(c) \le d_*\}| \ge \frac 12 |\varphi_1(M)|"
\]

\mn
hence 

\[
\xi(\varphi'_1(x)) = \xi(\varphi_1(x)) > \xi(\varphi_2(x)) =
\xi(\varphi'_2(x)). 
\]

\bigskip

\noindent
\underline{Sub-case 2b}:  $d_* \ge a_{p,n}$.

Let $\varphi'_1(x) = \varphi_2(x) \wedge (\sigma(x) \ge b_{p,n+1})$ and
$\varphi'_2(x) = \varphi_2(x)$.  Now $\bar\varphi' =
(\varphi'_1,\varphi'_2)$ is as required noting (by $(*)_{4.2}$) that

\[
M_a \models ``|\varphi'_1(M)| \ge |\{c \in \varphi_1(M):
\sigma(c) \ge d_*\}| \ge \frac 12 |\varphi_1(M)|"
\]

\mn
hence 

\[
\xi(\varphi'_1(x)) = \xi(\varphi_1(x)) > \xi(\varphi_2(x)) =
\xi(\varphi'_2(x)). 
\]

\mn
So we are done proving $\boxplus_4$. 
\bigskip

\noindent
\underline{Stage C}:

How do we omit the new type?  Recall that $c_*$ will realize a type to which
$\bar\varphi \in \bbP$ is an approximation and we have to omit the
relevant type from clause $(B)(c)$ of the Claim.

This stage is dedicated to proving the
relevant statement:
\mn
\begin{enumerate}
\item[$\boxplus_5$]  if $\bar\varphi \in \bbP$ and $\sigma(x) \in
\Sigma_{\bold a}$ \then \, for some $\bar\varphi'$:
\begin{enumerate}
\item[$(a)$]  $\bar\varphi \le \bar\varphi' \in \bbP$
\sn
\item[$(b)$]  for some $a_1 < a_2 \le a_*$ we have
\sn
\item[${{}}$]  $\bullet \quad \varphi'_1(x) \vdash a_1 \le x < a_2$
\sn
\item[${{}}$]  $\bullet \quad \varphi'_1(x) \vdash \neg(F(a_1) \le
\sigma(x) < F(a_2))$.
\end{enumerate}
\end{enumerate}
\mn
First note
\mn
\begin{enumerate}
\item[$(*)_{5.1}$]  if there is $\bar\varphi' \in \Phi$ such that
$\bar\varphi \le \bar\varphi'$ and $\xi(\bar\varphi'_1)/\xi(\bar\varphi'_2) >
2$ \then \, the conclusion of $\boxplus_5$ holds.
\end{enumerate}
\mn
[Why?  As $|\bigcup\limits_{i} A_i| = \sum\limits_{i} |A_i|$ for
pairwise disjoint sets, i.e. the version provable in PA  
we can find $\varphi''_1(x) \in
\Phi$ such that $\varphi''_1(M_{\bold a}) \subseteq
\varphi'_1(M_{\bold a}),M_{\bold a} \models ``|\varphi'_1(M_{\bold a})|/
|\varphi''_1(M_{\bold a})| \le |\varphi_2(M_{\bold a})|"$ and
$\sigma[\varphi''_1,\varphi_2]$, which was defined in Clause
$\boxplus_1(f)$, is constant 
say constantly $e$, hence $e \in \varphi_2(M_{\bold a})$.  So
$\xi(\varphi''_1(x)) \ge \xi(\varphi'_1(x)) - \xi(\varphi'_2(x)) > 2
\xi(\varphi'_2(x)) - \xi(\varphi'_2(x)) = \xi(\varphi'_2(x))$ hence
$(\varphi''_1,\varphi'_2)$ belongs to $\bbP$ and $\bar\varphi \le
\bar\varphi' \le (\varphi''_1,\varphi'_2)$.
As $e \in \varphi_2(M_{\bold a}) \subseteq [0,b_*)_{M_{\bold a}}$ and
$F$ is onto $M^{\bold a}_{< b_*}$ for some $d <_{M_{\bold a}} a_*$ we
have $F(d) = e$.  By $\boxplus_3$ \wilog \, $\varphi''(x) \vdash ``x <
d"$ or $\varphi''(x) \vdash ``d \le x"$ so we can choose $(a_1,a_2)$ as
$(0,d)$ or as $(d,a_*)$.  So we are done.]

So we can assume $(*)_{5.1}$ does not apply.  Hence it is natural to
deduce (can replace $\frac{1}{8}$ by any fixed $\varepsilon > 0$).
\mn
\begin{enumerate}
\item[$(*)_{5.2}$]  \Wilog \, for no $\bar\varphi' \in \Phi$ do we
have $\bar\varphi \le \bar\varphi'$ and

\[
\xi(\varphi'_1)/\xi(\varphi'_2) > (1 +
\frac{1}{8})\xi(\varphi_1)/\xi(\varphi_2).
\]
\end{enumerate}
\mn
[Why?  We try to choose $\bar\varphi^n$ by induction on $n \in \bbN$
such that $\bar\varphi^n \in \bbP,\bar\varphi^0 =
\bar\varphi,\bar\varphi^n \le \bar\varphi^{n+1}$ and
$\xi(\varphi^n_1)/\xi(\varphi'_2) \ge (1 + \frac{1}{8})^n
\xi(\varphi_1)/\xi(\varphi_2)$.  So for some $n$ we have
$\xi(\varphi^n_1)/\xi(\varphi'_2) > 2$ and we can apply $(*)_{5.1}$,
contradiction.  But $\bar\varphi^0$ is well defined, hence for some
$n,\bar\varphi^n$ is well defined but we cannot choose
$\bar\varphi^{n+1}$.  Now $\bar\varphi^n$ is as required in
$(*)_{5.2}$.]  

Clearly
\mn
\begin{enumerate}
\item[$(*)_{5.3}$]  if $a_1 < a_2$ are from $\varphi_1(M)$, so $b_1 \le
F(a_1) < F(a_2) \le b_2$ are from $\varphi_2(M)$ and $\xi(\varphi_1(x)
\wedge a_1 \le x < a_2 \wedge \sigma(x) \notin [b_1,b_2)) >
\xi(\varphi_2(x) \wedge b_1 \le x < b_2)$ \then \, we are done.
\end{enumerate}

So from now on we assume that there are no $a_1,a_2$ as in
$(*)_{5.3}$.

Let
\mn
\begin{enumerate}
\item[$(*)_{5.4}$]  $(a) \quad k_* \in \bbN \backslash \{0\}$ be large
enough such that $(\xi(\varphi_1)-\xi(\varphi_2))/\xi(\varphi_2) > 2/k_*$
\sn
\item[${{}}$]  $(b) \quad$ let $n(1)\in \bbN$ be large enough such that:
\begin{enumerate}
\item[${{}}$]  $\bullet \quad \xi(\varphi_1)-\xi(\varphi_2) > 1/n(1)$ 
\sn
\item[${{}}$]  $\bullet \quad \xi(\varphi_2) > (k_*+1)/n(1)$
\end{enumerate}
\item[${{}}$]  $(c) \quad$ let $n(2)\in \bbN$ be $> n(1)$
\sn
\item[$(*)_{5.5}$]  let
\sn
\item[${{}}$]  $(a) \quad \psi_1(x_1,x_2;y_1,y_2) = x_1 < x_2 \wedge
\varphi_1(x_1) \wedge \varphi_1(x_2) \wedge y_1 < y_2 < b_*$
\sn
\item[${{}}$]  $(b) \quad \psi_2(x_1,x_2;y_1,y_2)$ is the conjunction
of:
\begin{enumerate}
\item[${{}}$]  $\bullet \quad \psi_1(x_1,x_2,y_1,y_2)$
\sn
\item[${{}}$]  $\bullet \quad |\{x:\varphi_1(x) \wedge x_1 \le x <
x_2\}|^{n(1)} \ge |\{y:\varphi_2(y) \wedge y_1 \le y < y_2\}|^{n(1)}
\times a_*$
\end{enumerate}
\item[${{}}$]  $(c) \quad \vartheta_2(x_1,x_2) = (\exists
y_1,y_2)(\psi_2(x_1,x_2;y_1,y_2)$
\sn
\item[${{}}$]  $(d) \quad \psi_3(x_1,x_2;y_1,y_2)$ is the conjunction of
\begin{enumerate}
\item[${{}}$]  $\bullet \quad \psi_2(x_1,x_2;y_1,y_2)$
\sn
\item[${{}}$]  $\bullet \quad |\{x:\varphi_1(x) \wedge x_1 \le x < x_2 \wedge
\sigma(x) \notin [y_1,y_2)\}|^{n(2)} < |\{y:\varphi_2(a) \wedge y_1$

\hskip25pt $\le y < y_2\}|^{n(2)} \times a_*$
\end{enumerate}
\item[${{}}$]  $(e) \quad \vartheta_3(x_1,x_2) = (\exists
  y_1,y_2)\psi_3(x_1,x_2,y_1,y_2)$.
\end{enumerate}
\mn
So by our assumptions (for clause (b) use ``$(*)_{5.3}$ does not
apply") we have
\mn
\begin{enumerate}
\item[$(*)_{5.6}$]  $(a) \quad$ if $a_1 < a_2$ are from
$\varphi_1(M_{\bold a})$ then $M_{\bold a} \models
\psi_1[a_1,a_2;F^{[\varphi_2]}(a_1),F^{[\varphi_2]}(a_2)]$ 
\sn
\item[${{}}$]  $(b) \quad$ if $M_{\bold a} \models
\psi_2[a_1,a_2;F^{[\varphi_2]}(a_1),F^{[\varphi_2]}(a_2)]$ then
\sn
\begin{enumerate}
\item[$\bullet$]  $M_{\bold a} \models \psi_3[a_1,a_2;F^{[\varphi_2]}(a_1),
F^{[\varphi_2]}(a_2)]$
\sn
\item[$\bullet$]  $M_{\bold a} \models \vartheta_3[a_1,a_2]$.
\end{enumerate}
\end{enumerate}
\mn
Clearly
\mn
\begin{enumerate}
\item[$(*)_{5.7}$]   if $M_{\bold a} \models
\psi_3[a_1,a_2;b^\iota_1,b^\iota_2]$ for $\iota=1,2$ then
$[b^1_1,b^1_2)_{M_{\bold a}} \cap [b^2_1,b^2_2)_{M_{\bold a}} \ne
\emptyset$.
\end{enumerate}
\mn
It is well known that for a linear order, for any finite family of
intervals, their intersection is non-empty iff the intersection of
any two is non-empty.  

Now a version of this can be proved in PA hence
\mn
\begin{enumerate}
\item[$(*)_{5.8}$]   for some $\sigma(x_1,x_2) \in \Sigma^2_{\bold a}$,
we have: if $M_{\bold a} \models \vartheta_3[a_1,a_2]$ \then \,
$\sigma(a_1,a_2) \in \varphi_2(M)$ and for every $b_1,b_2 \in
\varphi_2(M)$ we have $M_{\bold a} \models \psi_3[a_1,a_2;b_1,b_2]$
implies $\sigma(a_1,a_2) \in [b_1,b_2)_{M_{\bold a}}$.
\end{enumerate}
\mn

Now
\mn
\begin{enumerate}
\item[$(*)_{5.9}$]   $(a) \quad$ let 
$\varepsilon \in \bbQ_M \subseteq \bbR_M$ be a true rational such that 

\hskip25pt $\xi(\varphi_2) > \varepsilon > \xi(\varphi_2)k_*/(k_*+1)+1/n(1)$
\sn
\item[${{}}$]  $(b) \quad$ let $d_* = \lfloor(a_*)^\varepsilon \rfloor
\in M_{\bold a}$ computed in $\bbR_{\bold a}$ and $c_* =
\lfloor(a_*)^{\varepsilon-1/n(1)}\rfloor$ 
\sn
\item[$(*)_{5.10}$]   in $M_{\bold a}$ we can define an
increasing sequence $\langle a_{1,i}:i < i(*)\rangle$, so $i(*)
\in M_{\bold a}$ such that
\begin{enumerate}
\item[${{}}$]   $\bullet \quad a_{1,0} = 0,a_{1,i(*)} = a_*$
\sn
\item[${{}}$]   $\bullet \quad a_{1,i+1} = \text{ min}
\{a:\varphi_1(a)$ and $a_{1,i} < a$ and $|\varphi_1(M_{\bold a})
\cap [a_{1,i},a)|$ is $d_*\}$
\sn
\item[${{}}$]   $\bullet \quad |\{a:\varphi_1(a)$ and $a_{1,i(*)-1}
\le a < a_*\}|$ is $\ge d_*$ but $\le 2d_*$
\end{enumerate}
\item[$(*)_{5.11}$]   $(a) \quad$ in $M_{\bold a}$ we can define

$\qquad u = \{i <i(*):M_{\bold a} \models \vartheta_3[a_{1,i},a_{1,i+1}]\}$

$\qquad v = \{i < i(*):i \notin u\}$
\sn
\item[${{}}$]  $(b) \quad$ let $\varphi_{1,i}(x) := \varphi_1(x) 
\wedge a_{1,i} \le x < a_{1,i+1}$.
\end{enumerate}
\mn

So (will be used in Case 1 below)
\mn
\begin{enumerate}
\item[$(*)_{5.12}$]   $(a) \quad \xi(\varphi_{1,i}(x)) = \varepsilon$
for $i<i(*)$
\sn
\item[${{}}$]  $(b) \quad$ if $i < i(*)$ and $i \in v$ then

\hskip25pt $M_{\bold a} \models ``|\varphi_2(M_{\bold a}) 
\cap (F(a_{1,i}),F(a_{1,i+1}))_{M_{\bold a}}| \ge
|\varphi_{1,i}(M_{\bold a})| \times a^{-1/n(1)}_*"$

\hskip25pt $= b_* \times a^{-1/n(1)}_*"$.
\end{enumerate}
\mn
[Why?  Clause (a) is obvious by the definition of $\xi(-)$ and
$a_{1,i+1}$.  For clause (b) note that by 
the definition of $\vartheta_3$ in $(*)_{5.5}(e)$ we have
$M_{\bold a} \models \neg \psi_3[a_{1,i},a_{2,i};F^{[\varphi_2]}(a_{1,i}),
F^{[\varphi_2]}(a_{2,i})]$, but by $(*)_{5.6}(a)$
we have $M_{\bold a} \models \psi_3[a_{1,i},a_{2,i};F(a_{1,i}),
F(a_{1,i})]$.  By the definition of $\psi_3$ in $(*)_{5.5}(d)$ we are done.]

Now towards Case 2 note
\mn
\begin{enumerate}
\item[$(*)_{5.13}$]  if $i_1 < i_2$ are from $u$ then $F(a_{1,i_1}) <
\sigma^M(a_{1,i_2},a_{1,i_2 +1})$.
\end{enumerate}
\mn
[Why?  Obvious by $(*)_{5.8}$.]
\mn
\begin{enumerate}
\item[$(*)_{5.14}$]   we define terms
$\sigma_1(x_1,x_2),\sigma_2(x_1,x_2) \in \Sigma^2_{\bold a}$ 
such that if $i<i(*)$ then:
\begin{enumerate}
\item[$(a)$]   $M_{\bold a} \models \sigma_1(a_{1,i},a_{1,i+1}) <
\sigma(a_{1,i},a_{1,i+1}) < \sigma_2(a_{1,i},a_{1,i+1})$
\sn
\item[$(b)$]   $\varphi_2(M_{\bold a}) \cap [\sigma^{M_{\bold a}}
(a_{1,i},a_{1,i+1}),\sigma^{M_{\bold
a}}_2(a_{1,i},a_{1,i+1}))_{M_{\bold a}}$ has $\le c_*$ elements, in
$M_{\bold a}$'s-sense
\sn
\item[$(c)$]   $\varphi_2(M_{\bold a}) \cap [\sigma^{M_{\bold a}}_1
(a_{1,i},a_{1,i+1}),\sigma^{M_{\bold a}}((a_{1,i},a_{1,i+1}))_{M_{\bold a}}$ has $\le c_*$ elements in
$M_{\bold a}$'s-sense
\sn
\item[$(d)$]   if $i < j$ are from $u$ then $M_{\bold a} \models 
``\sigma_2(a_{1,i},a_{1,i+1}) < \sigma(a_{1,j},a_{1,j+1})"$
\sn
\item[$(e)$]   $M_{\bold a} \models ``\sigma_1(a_{1,i},a_{1,i+1}) <
\sigma(a_{1,i},a_{1,i+1}) < \sigma_2(a_{1,i},a_{1,i+1})"$
\sn
\item[$(f)$]   if $i \in u$ then
$M_{\bold a} \models ``\sigma_1(a_{1,i},a_{1,i+1}) < F(a_{1,i}) <
\sigma(a_{1,i},a_{1,i+1}) < F(a_{1,i+1}) <
\sigma_2(a_{1,i},a_{1,i+1})"$.
\end{enumerate}
\end{enumerate}
\mn
[Why?  Let $\sigma_2(a_{1,i},a_{1,i+1})$ be maximal such that the
relevant part of (a) and (b),(d) holds and $\sigma_1(a_{1,i},a_{1,i
+1})$ be minimal such that the other part of (a) and $(c),(e)$ holds.
By $(*)_{5.8}$ we can finish.]

\noindent
Now the proof splits.
\bigskip

\noindent
\underline{Case 1}:  $M_{\bold a} \models ``|v|  \ge i(*)/3"$.  Here
we shall use clause $(\gamma)$ of $\boxplus_2(a)$.

Let $v_1 = \{i \in v:M_{\bold a} \models ``|\{j \in v:j<i\}|$ is
even$\}$, so $M_{\bold a} \models ``|v_1| \ge i(*)/6"$.  Let
$\varphi'_1(x) := \varphi_1(x) \wedge (\exists z)[z \in v_1 \wedge x
\in [a_{1,z},a_{1,z+1}) \wedge \neg(\exists y)(\varphi_2(y) \wedge y
\in [a_{1,z},a_{1,z+1}) \wedge y < x)]$.

Let $\varphi'_2(x) := \varphi_2(x) \wedge (\text{the number }
|\{y:\varphi_2(a) \wedge y < x\}|$ is divisible by $c_*$).

Now
\mn
\begin{enumerate}
\item[$(*)_{5.14}$]  $(a) \quad 
\bar\varphi' := (\varphi'_1(x),\varphi'_2(x)) \in \bbP$
\sn
\item[${{}}$]  $(b) \quad \xi(\varphi'_1(x)) = 
\xi(\varphi_1(x)) - \varepsilon$
\sn
\item[${{}}$]  $(c) \quad \xi(\varphi'_2(x)) = 
\xi(\varphi_2(x)) - \varepsilon +1/n(1)$.
\end{enumerate}
\mn
So 

\begin{equation*}
\begin{array}{clcr}
\xi(\varphi'_1(x))/\xi(\varphi'_2(x)) &= (\xi(\varphi_1(x))
-\varepsilon)/(\xi(\varphi_2(x) - \varepsilon +1/n(1)) \\
  &\ge (\xi(\varphi_1(x) - \xi(\varphi_2(x))/(\xi(\varphi_2) -
\xi(\varphi_2)k_*/(k_* +1)) \\
  &= (k_* +1)(\xi(\varphi_1(x))-\xi(\varphi_2(x)))/\xi(\varphi_2) > 2
\end{array}
\end{equation*}

\mn
and we fall under $(*)_{5.1}$ finishing the proof of $\boxplus_5$.
\bigskip

\noindent
\underline{Case 2}:  $M_{\bold a} \models ``|u| \ge i(*)/3$.

Define in $M_{\bold a}$

$u_2 = \{i \in u:``|\{j \in \varphi_1(M):j<i\}|$ is even"$\}$.

So $M_{\bold a} \models ``|u_1| \ge i(*)/6"$.  Now $M_{\bold a}$
satisfies 

\begin{equation*}
\begin{array}{clcr}
|\varphi_1(M)| \le (i(*)+1)d_* &\le 7
|\cup\{\varphi_1(M_{\bold a}) \cap [a_{1,i},a_{1,i+1}):i \in u_1\}| \\
  &= 7 \Sigma\{|\varphi_1(M_{\bold a}) \cap [a_{1,i},a_{1,i+1})|:
i \in u_1\}| \\
  &\le 7 \Sigma\{|\varphi_2(M_{\bold a}) \cap
[\sigma_1(a_{1,i},a_{1,i+1}),\sigma_2(a_{1,i},a_{1,i+1}))| \times
a^{1/n(1)}_*:i \in u_1\}| \\
  &7|\cup\{\varphi_2,M_{\bold a}) \cap
[\sigma_1(a_{1,i},a_{2,i+1}),\sigma_2(a_{1,i},a_{1,i+1})):
i \in u_2\}| \times a^{1/n(1)}_* \\
  &< 7 \times |\varphi_2(M_{\bold a})| \times a^{1/n(1)}_*.
\end{array}
\end{equation*}

\noindent
Together $|\varphi_2(M)| \le 7 \times |\varphi_2(M)| \times a^{1/n(1)}_{a_*}$.
But as $n(1)$ was chosen large enough, i.e. $1/n(1) <
\xi(\varphi_2(M_{\bold a}) - \xi(\varphi_2(M_{\bold a}))$,
contradiction.  So we are done proving $\boxplus_5$.
\bigskip

\noindent
\underline{Stage D}:

We define the sets $\bold T = \bold T_1 \cup \bold T_2 \cup \bold T_3$
of tasks where
\mn
\begin{enumerate}
\item[$\boxplus_6$]  $\bullet \quad \bold T_1 = \{(1,\varphi(x)):\varphi(x) \in
\Phi\}$, toward completeness
\sn
\item[${{}}$]  $\bullet \quad \bold T_2 
= \{(2,\sigma(x),\varphi(x)):\sigma(x) \in
\Sigma_{\bold a}$ and $p(x) \in \Gamma_{\bold a}\}$, toward 

\hskip25pt preserving $p(x)$ is omitted
\sn
\item[${{}}$]  $\bullet \quad \bold T_3 = \{(3,\sigma(x)):\sigma(x) \in
\Sigma_{\bold a}\}$, toward ``stopping $F$", ``omitting 

\hskip25pt the new type".
\end{enumerate}
\mn
Clearly $\bold T$ is countable, let $\langle \bold s_n:n <
\omega\rangle$ list it.

We now choose $\bar\varphi^n$ by induction on $n$ such that:
\mn
\begin{enumerate}
\item[$\boxplus_7$]  $(a) \quad \bar\varphi^n \in \bbP$
\sn
\item[${{}}$]  $(b) \quad \bar\varphi^m \le \bar\varphi^n$ for $m < n$
\sn
\item[${{}}$]  $(c) \quad$ if $n=m+1$ and $\bold s_m = (1,\varphi(x))$
\then \, $\varphi_n(x) \vdash \varphi(x)$ or $\varphi_n(x) \vdash 
\neg\varphi(x)$
\sn
\item[${{}}$]  $(d) \quad$ if $n=m+1$ and $\bold s_m =
(2,\sigma(x),p(x))$ \then \, for some $k,\varphi_n(x) \vdash ``\sigma(x)$

\hskip25pt $\notin (a_{p,k},b_{p,k})"$
\sn
\item[${{}}$]  $(e) \quad$ if $n=m+1$ and $\bold s_m = (3,\sigma(x))$
then for some $a_{1,m} < a_{2,m} \le a_*$ we 

\hskip25pt have $\varphi'_n(x)
\vdash a_{1,m} \le x < a_{2,m} \wedge \neg(F(a_{1,n}) \le \sigma(x) <
a_{2,m})$. 
\end{enumerate}
\mn
Why can we carry the induction?  $\bar\varphi^0$ trivial, for
$\bar\varphi^{m+1}$ if $\bold s_m \in \bold T_1$ by $\boxplus_3$, if
$\bold s_m \in \bold T_2$ by $\boxplus_4$ and if $\bold s_m \in \bold
T_3$ by $\boxplus_5$, more fully, let $\bold s_m = (3,\sigma(x))$, let
$\sigma'(x)$ be defined by $\sigma'(x) = \text{ min}\{y:y = b_*$ or
$\sigma(x) \le y \wedge \varphi(y)\}$, now apply $\boxplus_5$ to
$(\bar\varphi^m,\sigma'(-))$. 

Note
\mn
\begin{enumerate}
\item[$(*)_{7.1}$]  $\{\varphi_n(x):n < \omega\}$ is a complete type over
$M_{\bold a}$.
\end{enumerate}
\mn
[Why?  By clause (c) of $\boxplus_7$ and the choice of $\bold T_1$.]

By compactness there are $N$ and $c_*$ such that
\mn
\begin{enumerate}
\item[$(*)_{7.2}$]  $(a) \quad M_{\bold a} \prec N$
\sn
\item[${{}}$]  $(b) \quad c_*$ realizes $\{\varphi_n(x):n < \omega\}$.
\end{enumerate}
\mn
As $T$ being a completion of PA has definable Skolem functions, \wilog
\,
\mn
\begin{enumerate}
\item[$(*)_{7.3}$]  $N$ is the Skolem hull of $M_{\bold a} \cup \{c_*\}$.
\end{enumerate}
\mn
Now by $\boxplus_7(d)$
\mn
\begin{enumerate}
\item[$(*)_{7.4}$]  $N$ omit every $p \in \Gamma_{\bold a}$.
\end{enumerate}
\mn
Also by $\boxplus_7(c)$
\mn
\begin{enumerate}
\item[$(*)_{7.5}$]  $N$ omits $\{F(a) < x < F(a_1):a_0 < c_* < a_1
\le a_*$ and $\{a_0,a_1\} \subseteq M_{\bold a}\}$.
\end{enumerate}
\mn
By renaming, \wilog \, the set of elements of $N$ is a countable
ordinal, so we can finish the proof of the claim.
\end{PROOF}

\begin{discussion}
\label{d34}
1) Note that a natural approach is to approximate the type of $c_*$ by
formulas $\varphi(x)$ with parameters from $M$ such that
$\varphi(x) \vdash ``x < a_*"$ and $M_{\bold a}$ ``think"
$|\varphi(M_{\bold a})|$ is large enough than $b_*$.  Then for $\sigma(x) \in
\Sigma_{\bold a}$  (i.e. a term with parameters from $M$), which maps
$\{c:c <_M a_*\}$ into $\{d:d <_M b_*\}$ we have to ensure $\sigma(c_*)$
will not realize the undesirable type, so it is natural to ``shrink"
$\varphi(x)$ to $\varphi'(x)$ such that ``$|\varphi'(M)|$ is large
enough then $|\varphi(M)|/b_*"$ in 
the sense of $M$ and $\sigma(-)$ is constant on
$\varphi'(M)$.  This requires that also $|\varphi(M)|/b_*$ is large so
a natural choice for
``$\varphi(x)$ is large" means, e.g. $M \models ``|\varphi(M)| \ge
b^n_*"$ for every $n \in \bbN$.  This is fine if $\neg(a_* E^4_M
b_*)$, but not if we know $\neg(a_* E^3_M b_*)$ but possibly $a_*
E^4_M b_*$.  This motivates the main definition of $\bbP$ below.

\noindent
2) We shall say that $(\varphi_1,\varphi_2) \in \bbP$ is a 
``poor man's substitute" to the    original problem \when \,:
\mn
\begin{enumerate}
\item[$(a)$]  $[0,a_*)_{M_{\bold a}}$ is replaced by $\varphi_1(M)$ 
\sn
\item[$(b)$]  $[0,b_*)_{M_{\bold a}}$ is replaced by $\varphi_2(M)$ 
\sn
\item[$(c)$]  $F \rest [0,a_*)_{M_{\bold a}}$ is replaced by $F \rest 
\varphi_1(M_{\bold a})$, really rounded to $\varphi_2(M)$
\sn
\item[$(d)$] $a_* > b_* \wedge \neg(a_* E^3_{M_{\bold a}},b_*)$ is 
replaced by $\xi(\varphi_1) > \xi(\varphi_2)$, see $\boxplus_1(e)$ in
the proof.
\end{enumerate}
\mn
3) Why we demand 
condition $(\delta)$ of $\boxplus_2(a)$ in the proof of \ref{d31}?

Assume $M_{\bold a} \prec M$ and $a \in M \backslash M_{\bold a},a <_M
a_*$ and $F^+$ is an automorphism of $M \rest \{<\}$ extending $F$
\then \, $\{f(a_1):a_1 <_M a_*,a_1 <_M a\},\{F(a_2):a_2 \in 
M_{\bold a},a_* < a_2 < a_*\})$ is a cut of $M_{\bold a}$,
which $F^+(a)$ realizes in $M$.  If $M$ ``thinks" $|\varphi_2(M)|$ is $\ll
b_*,F$ may be one-to-one from $\varphi_2(M)$ onto some definable
subset $\varphi'_2(M) \subseteq [0,b_*)_M$.  A reasonable suggestion
is to demand $|\varphi_2(M)| \gg b_*$.  Consider for transparency the
case $M_{\bold a} \models ``a_* < b_* b_*"$.

But then let $E$ be the definable convex equivalent relation on
$[0,a_*)$ such that each equivalence class is of size $b_{**}$, 
then the cut the new element realizes is really a cut of $[0,a_*)/E$.
Now $F^+$ maps every
$E$-equivalence class to some $E'$-equivalence class, $E'$ a definable
convex equivalence relation on $[0,b_{**})$ and $F$ as a map from
$[0,a_*)/E$ into $[0,b_*)/E'$ is defined, possible if $|[0,a_*)/E| =
|[0,b_*)/E'|$. 

The solution is via clause $(\delta)$, which tells us that in part
(1),(2) of the discussion, $\xi(\varphi_1) > \xi(\varphi_2)$ is a
real substitute, see clause (d) in part (2).

\noindent
4) Why clause $(\gamma)$ in $\boxplus_2(a)$, defining $\bbP$?
Otherwise $\varphi_2(-)$ may be irrelevant to the type we like to 
omit, so impossible.

\noindent
5) By such approximations, i.e. member of $\bbP$,
\mn
\begin{enumerate}
\item[$(A)$]  why can we arrive to a complete type?
\end{enumerate}
\mn
\underline{Answer}:  
As if we divide $\varphi_1$ to two sets at least one has the same $\xi(-)$:
\mn
\begin{enumerate}
\item[$(B)$]   why can we continue to omit $p(x) \in \Gamma_{\bold a}$?
\end{enumerate}
\mn
\underline{Answer}: 
As if $\sigma(-)$ is a definable (in $M_{\bold a}$) function 
with domain $\varphi_1$ let
$d_*$ be maximal such that $|\{a \in \varphi_1(M):\sigma(a) < d_*\}|
\le \frac 12|\varphi_2(M)|$, i.e. is in the middle in the right sense.

If $\sigma^{-1}\{d_*\}$ is large enough we easily finish; otherwise for some
$n$ we have $d_* \notin (a_{p,n},b_{p,n})$, so $\varphi_1(M) \wedge \sigma(x)
\notin (a_{p,n},b_{p,n})$ has $\ge \frac 12(\varphi_2(M))$ elements
\mn
\begin{enumerate}
\item[$(C)$]   why can guarantee that such $\sigma(x)$ does not realize
the forbidden new type?
\end{enumerate}
\mn
\underline{Answer}:
This is a major point.  
If $\xi(\varphi_1) > 2 \xi(\varphi_2)$ this is easy (as in the case we
use $\neg a_* E^4_M b_*$) and if for some $a_1 < a_2$ we have
$\xi(\varphi'_2) > \xi(\varphi'_2)$ we let

\[
\varphi'_2(x) = (\varphi_2(x) \wedge a_1 \le x < a_2 \wedge \sigma(x) \notin
(F(a_1),F(a_2))
\]

\mn
and we let

\[
\varphi''_2(x) := (\varphi_2 \wedge F(a_1) \le x < F(a_2))
\]

\mn
we are done, so assume there are no such $a_1,a_2$.

We consider two possible reasons for the ``failure" of a suggested
pair $(a_1,a_2)$.  One reason is that maybe the length of the interval
$[F(a_1),F(a_2))$ of $\varphi_2(M_1)$ is too large.  The second is
that it is small enough but $\sigma(-)$ maps the large majority of
$\varphi_1(M) \cap [a_1,a_2)$ into $[F(a_1),F(a_2))$.  In the second
version we can define a version of it's property satisfied by
$(a_1,a_2,F(a_1),F(a_2))$.  So we have enough intervals of pseudo
second kind (pseudo means using the definable version of the property).
 So dividing $\varphi_1(M)$ to convex
subsets of equal (suitable) size (essentially
$a^{\xi(\varphi_2)}_*,\zeta \in \bbR_{>0}$ small enough) by $\langle
a_i:i < i(*)\rangle$ we have: for some such interval $[a_i,a_{i+1})$ there are
$b_1,b_2$ as above.  For those for which we cannot define
$(F(a_1),F(a_2))$ we can define it up to a good approximation.  If
there are enough, (this may include ``pseudo cases" in respect to $F$)
we can replace $\varphi_1(M)$ by $\varphi'_1(M) = \{a_i:i < i(*)\}$
and $\varphi'_2(-)$ defined by the function above.

So $|\varphi'(M)|$ is significantly smaller than $|\varphi_1(M)|$,
essentially $\xi(\varphi'_1) = \xi(\varphi_1) - \xi(a_{i+1}-a_i) \sim 
\xi(\varphi_1) - \xi(\varphi_2) + \zeta,\zeta$ quite small.  But we are
over-compensating so we decrease $\varphi_2(x)$ to
$\varphi'_2(x)$ which is quite closed to $\{F(a_i):[a_i,a_{i+1})$ is
of the pseudo second kind$\}$ and $\xi(\varphi'_2)$ is essentially
$\xi(\varphi_2) - \xi(\varphi_2) + \zeta \sim \zeta$.  So both lose
similarly in the $\xi(-)$ measure but now, if we have arranged the
numbers correctly $\xi(\varphi'_2) > 2 \xi(\varphi'_2)$, a case we know
to solve.

If there are not enough $i$'s of the pseudo second kind, the function
essentially inflates the image getting a finite cardinality arithmetic
contradiction. 
\end{discussion}

\begin{theorem}
\label{d35}
Assume $\diamondsuit_{\aleph_1}$.  If $M$ is a countable model of $\PA$, 
\then \, $M$ has an elementary extension $N$ of cardinality $\aleph_1$ such
that $E^5_N = E^3_N$, i.e. is 3-{\rm o.r.}
\end{theorem}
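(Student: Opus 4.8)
The plan is to build $N$ as the union of a $\le_{\AP}$-increasing continuous chain $\langle \mathbf{a}_\alpha:\alpha<\omega_1\rangle$ in $\AP$ (with $\lambda=\aleph_0$), using $\diamondsuit_{\aleph_1}$ to anticipate every order-automorphism of the eventual $N$ and Main Claim \ref{d31} to destroy the ``bad'' ones, i.e.\ those mapping some $a$ to some $b$ with $\neg(a E^3 b)$. Recall that $E^3_N\subseteq E^5_N$ always holds by \ref{b9}, and that by \ref{a7}(3) the assertion ``$N$ is $3$-o.r.'' is precisely the reverse inclusion $E^5_N\subseteq E^3_N$; so it suffices to make $N$ $3$-o.r.

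For the construction, start with $\mathbf{a}_0$ where $M_{\mathbf{a}_0}$ is a copy of $M$ with universe a countable ordinal and $\Gamma_{\mathbf{a}_0}=\emptyset$, and arrange throughout that the universe of $M_{\mathbf{a}_\alpha}$ is a countable ordinal $\gamma_\alpha$, continuous and strictly increasing in $\alpha$. At limits take $\mathbf{a}_\delta=\bigcup_{\alpha<\delta}\mathbf{a}_\alpha$, which lies in $\AP$ and is the $\le_{\AP}$-lub by \ref{d5}(2); note every cut-shaped type in $\Gamma_{\mathbf{a}_\delta}$ is still omitted by $M_{\mathbf{a}_\delta}$ since any potential realization lies in some $M_{\mathbf{a}_\beta}$, which already omits it. At a successor $\alpha+1$, let $S_\alpha\subseteq\alpha$ be the $\diamondsuit$-guess; if $S_\alpha$ codes an order-automorphism $h$ of $M_{\mathbf{a}_\alpha}$ for which there exist $a>b$ in $M_{\mathbf{a}_\alpha}\setminus\bbN$ with $h(a)=b$ and $\neg(a E^3_{M_{\mathbf{a}_\alpha}}b)$, fix the least such pair $(a,b)$ and apply \ref{d31} to $(M_{\mathbf{a}_\alpha},\Gamma_{\mathbf{a}_\alpha})$, $a$, $b$, and $F:=h\restriction(M_{\mathbf{a}_\alpha})_{<a}$ (its hypotheses (A)(a)--(c) hold, since $F$ is an order-isomorphism of $(M_{\mathbf{a}_\alpha})_{<a}$ onto $(M_{\mathbf{a}_\alpha})_{<b}$ because $h$ is an automorphism with $h(a)=b$); let $\mathbf{a}_{\alpha+1}=\mathbf{b}$ be the output, which adds a new $c_*<_{M_{\mathbf{a}_{\alpha+1}}}a$ and a type $p\in\Gamma_{\mathbf{a}_{\alpha+1}}$ equivalent to $\{F(a_1)<x<F(a_2):M_{\mathbf{a}_{\alpha+1}}\models a_1<c_*<a_2\le a\}$, and rename its universe. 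Otherwise let $M_{\mathbf{a}_{\alpha+1}}$ be a proper conservative elementary extension of $M_{\mathbf{a}_\alpha}$ (MacDowell--Specker), with $\Gamma_{\mathbf{a}_{\alpha+1}}=\Gamma_{\mathbf{a}_\alpha}$: conservativity keeps the cut-types omitted, because a definable initial segment of $M_{\mathbf{a}_\alpha}$ lying below a hypothetical realization is bounded, hence has a largest element, which would itself realize the type. Then each $M_{\mathbf{a}_\alpha}$ is countable and the chain strictly increasing, so $N:=\bigcup_{\alpha<\omega_1}M_{\mathbf{a}_\alpha}$ is an elementary extension of $M$ of cardinality $\aleph_1$ omitting every type in $\Gamma:=\bigcup_{\alpha<\omega_1}\Gamma_{\mathbf{a}_\alpha}$.

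To see $N$ is $3$-o.r., suppose not: choose $a_*,b_*\in N\setminus\bbN$ with $a_* E^5_N b_*$ but $\neg(a_* E^3_N b_*)$, assume $a_*>b_*$ (replace the automorphism by its inverse if needed), and by \ref{a7}(3) fix an order-automorphism $g$ of $N$ with $g(a_*)=b_*$. Code $g$ as a set $\tilde g\subseteq\omega_1$. The set of $\alpha<\omega_1$ with $\gamma_\alpha=\alpha$, with $\alpha$ closed under $g$ and $g^{-1}$, with $a_*,b_*<\alpha$, and with $\tilde g\cap\alpha$ coding $g\restriction M_{\mathbf{a}_\alpha}$ is a club, so by $\diamondsuit_{\aleph_1}$ we may pick $\alpha$ in it with $S_\alpha=\tilde g\cap\alpha$. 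Then $h:=g\restriction M_{\mathbf{a}_\alpha}$ is an order-automorphism of $M_{\mathbf{a}_\alpha}$, and $(a_*,b_*)$ witnesses the ``there exist $a>b$'' clause: $h(a_*)=b_*$, and $\neg(a_* E^3_{M_{\mathbf{a}_\alpha}}b_*)$ since any $c\in M_{\mathbf{a}_\alpha}$ witnessing $a_* E^3 b_*$ there would witness it in $N$ (the conditions ``$c^n<a_*$'' and ``$c^n<b_*$'' for each standard $n$, and ``$a_*<b_*\times c$'', ``$b_*<a_*\times c$'', are first order and pass up from $M_{\mathbf{a}_\alpha}\prec N$). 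Hence at step $\alpha+1$ we applied \ref{d31} to $h$ with some pair $(a',b')$, producing $c_*$ and a type $p\in\Gamma$ equivalent to $\{F'(a_1)<x<F'(a_2):a_1<c_*<a_2\le a'\}$ with $F'=h\restriction(M_{\mathbf{a}_\alpha})_{<a'}$, and $N$ omits $p$. But $g$ extends $h$, hence extends $F'$; applying $g$ to each chain $a_1<_N c_*<_N a_2$ with $a_1,a_2\in M_{\mathbf{a}_\alpha}$ and $a_1<c_*<a_2\le a'$ gives $F'(a_1)<_N g(c_*)<_N F'(a_2)$, so $g(c_*)\in N$ realizes $p$ --- a contradiction. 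Therefore $E^5_N=E^3_N$.

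The main obstacle is entirely contained in the already-granted Main Claim \ref{d31}: the ``wedge-question'' argument that simultaneously adjoins $c_*$, keeps omitting all the old types of $\Gamma_{\mathbf{a}}$, and makes the single new cut-type omitted, while working only from $\neg(a_* E^3 b_*)$ rather than from the much easier $\neg(a_* E^4 b_*)$. Granting that, the only delicate points above are bookkeeping: coding a partial order-automorphism as a subset of the countable-ordinal universe so that $\diamondsuit$ genuinely sees it, arranging the club so the guessed fragment really extends the offending $g$, and observing that conservative elementary extensions preserve omission of cut-types (needed only to force $|N|=\aleph_1$).
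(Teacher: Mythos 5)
Your proof is correct and follows essentially the same route as the paper's: build a continuous $\le_{\AP}$-increasing chain of length $\omega_1$, use $\diamondsuit_{\aleph_1}$ to anticipate the offending order-isomorphism, invoke Main Claim \ref{d31} at the guessed stage to install a cut-type that the putative automorphism would have to realize, and close off. The only (cosmetic) differences are that you guess the full automorphism $g$ of $N$ rather than the partial isomorphism $F:N_{<a_*}\to N_{<b_*}$ (so the bad pair must be selected by a ``least'' rule rather than being read off as in \ref{d31}(A)(c)), and you pad the ``idle'' steps with MacDowell--Specker conservative extensions, whereas the paper either does a simpler omitting-types extension or applies \ref{d31} to an artificially chosen pair using that any two non-standard elements of a countable model are $E^5$-equivalent.
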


\begin{PROOF}{\ref{d35}}
\Wilog \, $M$ has universe a countable ordinal.
As we are assuming $\diamondsuit_{\aleph_1}$, we choose $F_\alpha$ a
partial function from $\alpha$ to $\alpha$ for $\alpha < \aleph_1$,
i.e. $\bar F = \langle F_\alpha,\alpha < \aleph_1\rangle$ such that
for  every partial function $F:\aleph_1 \rightarrow \aleph_1$, for 
stationarily many countable limit ordinals $\delta$ 
we have $F_\delta = F \rest \delta$.

We now choose $\bold a_\alpha \in \text{\rm AP}_{\aleph_0}$ by
induction on $\alpha < \aleph_1$ such that
\mn
\begin{enumerate}
\item[$(a)$]  $\bullet \quad M_{\bold a_0} = M$
\sn
\item[${{}}$]  $\bullet \quad \Gamma_{\bold a_0} = \emptyset$
\sn
\item[$(b)$]  $\langle \bold a_\beta:\beta \le \alpha\rangle$ is
$\le_{\text{\rm AP}}$-increasing continuous
\sn
\item[$(c)$]  if $\alpha = \delta +1,\delta$ is a countable limit ordinal,
$M_{\bold a_\delta}$ has universe $\delta$ and for some 
$a_\delta,b_\delta$ the tuple $(\bold
a_\delta,a_\delta,b_\delta,F_\delta)$ satisfies the assumptions of
\ref{d31} on $(\bold a,a_*,b_*,F)$, they are necessarily unique (see
\ref{d31}(A)(c)), \then \, $\bold a_{\delta +1}$ 
satisfies its conclusion (for some $c_\delta$).
\end{enumerate} 
\mn
Why can we carry the induction?

For $\alpha = 0$ recall clause (a).

For $\alpha =1$, as $\Gamma_{\bold a_0} = \emptyset$ let $M_{\bold
  a_1}$ be a countable model such that $M = M_{\bold a_0} \prec
  M_{\bold a_1},M \ne M_{\bold a_1}$ and \wilog \, the universe of
  $M_{\bold a_1}$ is a countable ordinal.

Lastly, let $\Gamma_{\bold a_1} = \emptyset$.

For $\alpha$ a limit ordinal use \ref{d5}(2), i.e. choose the union,
this is obvious.

For $\alpha = \beta +1$, if clause (c) apply use Claim \ref{d31}.

For $\alpha = \beta +1 >1$ when clause (c) does not apply, this is easier
than \ref{d31} (or choose $(a_*,h_*,F)$ such that $(\bold
a_\beta,a_*,b_*,F)$ are as in the assumption \ref{d31}, this is
possible because $M_{\bold a_\beta}$ is non-standard, see the case
$\alpha =1$,  and note that $a,b \in
M_{\bold a_\beta} \backslash \bbN \Rightarrow a E^5_{M_{\bold a}} b$
because $M_{\bold a}$ is countable; so we can use \ref{d31}).

Having carried the induction let $N = \cup\{M_{\bold a_\alpha}:\alpha <
\aleph_1\}$.

Clearly $N$ is a model of $T$ of cardinality $\aleph_1$.  We know that
$E^3_N \subseteq E^5_N$ by \ref{b9}.  Toward contradiction assume
$a_* E^5_N b_*$ but $\neg(a_* E^3_N b_*)$ where $a_*,b_* \in N
\backslash M$.  \Wilog \, $b_* < a_*$ and let $F$ be an
order-isomorphism from $N_{< a_*}$ onto $N_{< b_*}$.  So $S =
\{\delta:F \rest \delta = F_\delta\}$ is stationary and $E =
\{\delta:a_*,b_* \in M_{\bold a_\delta},M_{\bold a_\delta}$ has
universe $\delta$ and $F$ maps $M^{\bold a_\delta}_{< a_*}$ onto
$M^{\bold a_\delta}_{< b_*}\}$ is a club of $\aleph_1$. 

Choose $\delta \in S \cap E$ and use the choice of $\bold a_{\delta
  +1}$, i.e. clause (c) to get a contradiction.
\end{PROOF}

\begin{theorem}
\label{d39}
Assume $\lambda = \lambda^{< \lambda}$ and
$\diamondsuit_S$ where $S = S^{\lambda^+}_\lambda = \{\delta <
\lambda^+:\cf(\delta) = \lambda\}$.

For any model $M$ of $\PA$ there is a
$\lambda$-saturated model $N$ of $\Th(M)$ of cardinality $\lambda^+$ such
that $E^5_N \subseteq E^4_N$.
\end{theorem}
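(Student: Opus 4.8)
The plan is to imitate the proof of Theorem~\ref{d35}, replacing $(\aleph_0,\aleph_1)$ by $(\lambda,\lambda^+)$ and $E^3$ by $E^4$, and to weave in the extra bookkeeping that makes the final model $\lambda$-saturated. Fix, by $\diamondsuit_S$, a sequence $\langle F_\delta:\delta\in S\rangle$ with $F_\delta$ a partial function from $\delta$ to $\delta$ such that $\{\delta\in S:F\rest\delta=F_\delta\}$ is stationary for every partial $F:\lambda^+\to\lambda^+$. I would then build a $\le_{\AP}$-increasing continuous sequence $\langle\bold a_\alpha:\alpha<\lambda^+\rangle$ with $\bold a_\alpha\in\AP_{\Th(M)}$, with $M_{\bold a_\alpha}$ saturated of cardinality $\lambda$ for every successor $\alpha$, with the universe of each $M_{\bold a_\alpha}$ an ordinal, with each extension proper, and with the universe of $M_{\bold a_\delta}$ equal to $\delta$ for a club of $\delta$; starting from a saturated $M_{\bold a_0}\models\Th(M)$ of size $\lambda$ (which exists since $\lambda=\lambda^{<\lambda}$, by \ref{d5}(3)) with $\Gamma_{\bold a_0}=\emptyset$, and taking unions at limits by \ref{d5}(2). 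At a successor $\delta+1$ with $\delta\in S$, with the universe of $M_{\bold a_\delta}$ equal to $\delta$, and where $F_\delta$ is an order-isomorphism from $(M_{\bold a_\delta})_{<a_\delta}$ onto $(M_{\bold a_\delta})_{<b_\delta}$ for some $a_\delta>b_\delta$ in $M_{\bold a_\delta}\setminus\bbN$ with $\neg(a_\delta E^4_{M_{\bold a_\delta}}b_\delta)$ (the data $a_\delta,b_\delta$ then being unique), I apply the analogue of \ref{d31} stated below; at every other successor I apply the same analogue with a dummy choice $(a_*,b_*,F)$, which is legitimate because in the saturated model $M_{\bold a_\delta}$ any two nonstandard $a,b$ satisfy $a E^5_{M_{\bold a_\delta}}b$ (the linear order $(M_{\bold a_\delta})_{<a}$ is, up to isomorphism, the saturated model of size $\lambda$ of the relevant complete theory of linear orders, independently of the nonstandard $a$, so use \ref{a7}(3)), and one can pick $b_*<a_*$ with $\neg(a_* E^4 b_*)$.

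The analogue of \ref{d31} I need is: if $\bold a\in\AP_{\Th(M)}$ with $M_{\bold a}$ saturated of size $\lambda$, if $b_*<a_*$ are in $M_{\bold a}\setminus\bbN$ with $\neg(a_* E^4_{M_{\bold a}}b_*)$, and if $F$ is an order-isomorphism from $(M_{\bold a})_{<a_*}$ onto $(M_{\bold a})_{<b_*}$, then there are $\bold b\ge_{\AP}\bold a$ and $c_*$ with $M_{\bold b}$ saturated of size $\lambda$, $c_*\in M_{\bold b}\setminus M_{\bold a}$, $c_*<_{M_{\bold b}}a_*$, $\Gamma_{\bold b}=\Gamma_{\bold a}\cup\{p_*\}$, and $p_*$ equivalent to $\{F(a_1)<x<F(a_2):M_{\bold b}\models a_1<c_*<a_2\le a_*,\ a_1,a_2\in M_{\bold a}\}$. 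Its proof is a simplification-and-extension of the proof of \ref{d31}. It is a simplification because, since $b_*<a_*$ and $\neg(a_* E^4_{M_{\bold a}}b_*)$, we have $M_{\bold a}\models b_*^{\,n}<a_*$ for all $n\in\bbN$, so in place of the quasi-order $\bbP$ and the invariant $\xi$ one may use the naive notion of \emph{large formula} of \ref{d34}(1) — a formula $\varphi(x)\vdash x<a_*$ with $M_{\bold a}\models``|\varphi(M_{\bold a})|\ge b_*^{\,n}"$ for every $n$ — since halving $\varphi(M_{\bold a})$ or dividing its size by $b_*$ preserves largeness; this collapses Stages~B and~C of \ref{d31} (no $u$/$v$ split is needed). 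It is an extension because the list of tasks driving the construction now also contains saturation tasks ``realize the complete type $q$ over the $<\lambda$-subset $A$ of the model built so far'', of which there are only $\le\lambda$ because $\lambda=\lambda^{<\lambda}$ forces $2^{<\lambda}\le\lambda$; so the construction still runs in length $\lambda$, as a $\prec$-chain $M_{\bold a}=N_0\prec\cdots\prec N_i\prec\cdots\ (i<\lambda)$ whose first step adjoins $c_*$ realizing over $M_{\bold a}$ the complete type produced by the large-formula bookkeeping (this step handling, exactly as in \ref{d31}, the completeness, $\Gamma_{\bold a}$-omitting and $F$-stopping tasks, so that $N_1$ omits $\Gamma_{\bold a}\cup\{p_*\}$), and whose later steps realize the bookkept small types while preserving omission of $\Gamma_{\bold a}\cup\{p_*\}$. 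The latter is possible because each member of $\Gamma_{\bold a}\cup\{p_*\}$ is a strictly decreasing $\lambda$-chain of intervals omitted by every $N_i$, hence non-principal over $N_i$, so its pullbacks under definable functions stay omitted; and $\lambda$-saturation of $M_{\bold b}$ does not reinstate $p_*$, because $M_{\bold a}$ being $\lambda$-saturated the gap below $a_*$ realized by $c_*$ — hence its $F$-image $p_*$ — has a side of cofinality $\lambda$, so $p_*$ cannot be deduced from fewer than $\lambda$ of its own formulas. Then $M_{\bold b}:=\bigcup_{i<\lambda}N_i$ is as required.

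Having carried the construction, set $N=\bigcup_{\alpha<\lambda^+}M_{\bold a_\alpha}$; it is a model of $\Th(M)$ of cardinality $\lambda^+$, and it is $\lambda$-saturated since any $A\subseteq N$ with $|A|<\lambda$ lies in some $M_{\bold a_\alpha}$ (as $\cf(\lambda^+)=\lambda^+>\lambda$), hence in a later saturated $M_{\bold a_{\beta+1}}$, which realizes every type over $A$. To prove $E^5_N\subseteq E^4_N$, suppose $a_* E^5_N b_*$ yet $\neg(a_* E^4_N b_*)$, WLOG $b_*<a_*$, and let $F$ be an order-isomorphism from $N_{<a_*}$ onto $N_{<b_*}$ (restrict the automorphism; cf.\ \ref{a7}(3)). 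Since $\neg(a_* E^4 b_*)$ is witnessed by ``$a_*\ge b_*^{\,n}$ for all $n\in\bbN$'', it passes down to every $M_{\bold a_\delta}$ containing $a_*,b_*$, so the set of $\delta\in S$ with $F\rest\delta=F_\delta$, with the universe of $M_{\bold a_\delta}$ equal to $\delta$, with $a_*,b_*\in M_{\bold a_\delta}$, and with $F$ mapping $(M_{\bold a_\delta})_{<a_*}$ onto $(M_{\bold a_\delta})_{<b_*}$ is the intersection of the $\diamondsuit$-stationary set with a club, hence non-empty; for such $\delta$ the analogue of \ref{d31} was invoked at stage $\delta+1$ with $(a_\delta,b_\delta)=(a_*,b_*)$, yielding $c_\delta\in M_{\bold a_{\delta+1}}\setminus M_{\bold a_\delta}$ with $c_\delta<a_*$ and $p_*\in\Gamma_{\bold a_{\delta+1}}$ equivalent to $\{F(a_1)<x<F(a_2):a_1<c_\delta<a_2\le a_*,\ a_1,a_2\in M_{\bold a_\delta}\}$. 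But then $F(c_\delta)\in N_{<b_*}$ realizes $p_*$ in $N$ by monotonicity of $F$, contradicting that $N$ omits $p_*$ (it lies in $\Gamma_{\bold a_\gamma}$ for every $\gamma>\delta$, and each $M_{\bold a_\gamma}$ omits it). Hence no such $F$ exists, so $\neg(a_* E^5_N b_*)$, completing the proof. I expect the real work to be in the analogue of \ref{d31} — running the $\Gamma$-omitting bookkeeping of \ref{d31} simultaneously with enough type-realizations to reach $\lambda$-saturation, and checking that $\lambda$-saturation does not clash with omitting the cut-types of $\Gamma$ — while the passage from $E^3$ to $E^4$ only lightens the combinatorics of the largeness measure.
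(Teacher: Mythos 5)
Your proposal is correct and follows the paper's intended route: the paper's proof of Theorem~\ref{d39} is precisely ``run the $\diamondsuit$-guided construction of Theorem~\ref{d35} at the pair $(\lambda,\lambda^+)$, replacing $E^3$ by $E^4$, and note that the analogue of Claim~\ref{d31} becomes much easier''. You correctly identify why it is easier — with $\neg(a_*\,E^4\,b_*)$ the naive ``large formula'' measure of Discussion~\ref{d34}(1) replaces the $\xi$-invariant and the whole $u/v$ case analysis of Stages~B--C — and you correctly interleave the $\lambda$-saturation bookkeeping (which the paper packages as Claim~\ref{d5}(3)) with the $\Gamma$-omitting tasks, observing that $\lambda=\lambda^{<\lambda}$ bounds the number of small types and that each cut in $\Gamma_{\bold a}\cup\{p_*\}$ has a side of cofinality $\lambda$ so cannot be forced by a type over a $<\lambda$-set. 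The final $\diamondsuit$-catching argument for $E^5_N\subseteq E^4_N$ also matches the paper's argument in~\ref{d35} verbatim.
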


\begin{PROOF}{\ref{d39}}
Similar to \ref{d35} only the parallel of \ref{d31} is much easier.
\end{PROOF}

\begin{conjecture}
\label{d43}
1) Assume $\lambda$ is strong limit singular of cofinality $\aleph_0$ and
$\diamondsuit_S$ where $S = S^{\lambda^+}_{\aleph_0} = \{\delta
< \lambda^+:\cf(\delta) = \aleph_0\}$ and $\square_\lambda$.  
If $M$ is a model of $\PA$, then $\Th(M)$ has a $\lambda$-universal 
model $N$ of cardinality $\lambda^+$ which is 3-{\rm o.r.} 

\noindent
2) Any model $M$ of PA has a 3-{\rm o.r.} elementary extension.
\end{conjecture}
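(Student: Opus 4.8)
The plan is to imitate the $\diamondsuit$-driven construction of Theorem~\ref{d35}, but run at the successor $\lambda^+$ of the singular strong limit $\lambda$: use $\AP_\lambda$ (Definition~\ref{d2}) as the approximation framework, $\diamondsuit_S$ with $S=S^{\lambda^+}_{\aleph_0}$ to guess candidate order-isomorphisms between initial segments, and $\square_\lambda$ to organise the transfinite limit stages of the type-omitting step that replaces Main Claim~\ref{d31}. Part~(2) would then follow from part~(1) by choosing such a $\lambda$ above $|M|$: the $\lambda$-universal model $N$ of $\Th(M)$ produced by~(1) contains an elementary copy of $M$, so transporting back gives $M\prec N$ with $N$ being $3$-o.r.; securing a single $\lambda$ carrying all three set-theoretic properties at once is the only extra ingredient, and is exactly where the statement is, for now, only a conjecture.

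Concretely, I would fix a $\diamondsuit_S$-sequence $\langle F_\delta:\delta\in S\rangle$ of partial functions $\delta\to\delta$ and build a $\le_{\AP}$-increasing continuous chain $\langle\bold a_\alpha:\alpha<\lambda^+\rangle$ with $M_{\bold a_0}=M$, $\Gamma_{\bold a_0}=\emptyset$, each $M_{\bold a_\alpha}$ of cardinality $\le\lambda$ with universe an ordinal $<\lambda^+$ and, say, $|M_{\bold a_\alpha}|=|\alpha|+\aleph_0$. Along the chain one bookkeeps two families of requirements: (i)~$\lambda$-universality, realising at club-many stages every type over every $<\lambda$-sized elementary submodel of the union, which uses $2^{<\lambda}=\lambda$ (from $\lambda$ strong limit) to list the relevant small models together with the closure of $\AP_\lambda$ under the corresponding unions; and (ii)~$3$-order rigidity, where at each $\delta\in S$ for which $M_{\bold a_\delta}$ has universe $\delta$ and $(\bold a_\delta,a_\delta,b_\delta,F_\delta)$ satisfies hypothesis~(A) of the $\lambda$-analogue of~\ref{d31} --- that is, $a_\delta>_M b_\delta$, both non-standard and not $E^3$-equivalent, and $F_\delta$ an order-isomorphism of the two initial segments --- one passes to $\bold a_{\delta+1}$ realising conclusion~(B): adjoin $c_*<_M a_\delta$ in $M_{\bold a_{\delta+1}}\setminus M_{\bold a_\delta}$ whose type forces a cut that no extension of $F_\delta$ can respect, while still omitting every $p\in\Gamma_{\bold a_\delta}$ and every forbidden type introduced at earlier stages. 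Let $N=\bigcup_{\alpha<\lambda^+}M_{\bold a_\alpha}$; it is $\lambda$-universal of cardinality $\lambda^+$, and $E^3_N\subseteq E^5_N$ by~\ref{b9}. If some pair $(a_*,b_*)$ witnessed the reverse failure with $F$ an order-isomorphism $N_{<a_*}\to N_{<b_*}$, then $\{\delta\in S:F\rest\delta=F_\delta\}$ is stationary and meets the club of $\delta$ with $a_*,b_*\in M_{\bold a_\delta}$, universe $\delta$, and $F$ mapping $M^{\bold a_\delta}_{<a_*}$ onto $M^{\bold a_\delta}_{<b_*}$; at such a $\delta$, clause~(ii) produces a contradiction, so $E^5_N=E^3_N$ and $N$ is $3$-o.r.

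The hard part will be the $\lambda$-analogue of Main Claim~\ref{d31}. In its proof the set $\bold T$ of tasks --- completeness ($\bold T_1$), preservation of each omitted $p\in\Gamma$ ($\bold T_2$), ``stopping $F$'' ($\bold T_3$) --- is countable, so the approximations $\bar\varphi^n\in\bbP$ are chosen along an $\omega$-chain and no limit step ever arises. When $|M_{\bold a_\delta}|=\lambda$ there are $\lambda$-many tasks of each kind, so the inner construction has genuine length $\lambda$ with limit stages of every cofinality $<\lambda$, while $\bbP$ as defined in $\boxplus_2$ has no lower bound for a descending chain of formulas of infinite length, since an infinite conjunction $\bigwedge_\iota\varphi^\iota_1(M_{\bold a})$ of definable convex sets need not be cut out by one formula. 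The fix I would pursue keeps the internal engine of~\ref{d31} verbatim --- the finite cardinality arithmetic $\xi(\cdot)$, the map $\bold j_M$, the wedge-splitting of $\boxplus_3$, the ``poor man substitute'' of Discussion~\ref{d34}, and the division of $\varphi_1(M)$ into $\xi$-calibrated convex blocks $\langle[a_{1,i},a_{1,i+1}):i<i(*)\rangle$ used around $(*)_{5.10}$ --- but redesigns the \emph{external} notion of condition so that it becomes closed under the limits that occur: one threads the inner construction along a $\square_\lambda$-scale, using the coherence of the square clubs to glue, at each limit $\zeta<\lambda$, the convex decompositions chosen below $\zeta$ into a single definable decomposition of the correct $\xi$-size, thereby replacing the illegal infinite conjunction by a genuine formula at no cost in $\xi$. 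Matching $\cf\lambda=\aleph_0$ to $S^{\lambda^+}_{\aleph_0}$ is then what lets the outer $\diamondsuit$-construction strike exactly the stages $\delta$ at which $M_{\bold a_\delta}$ is a well-filtered union of size-$<\lambda$ pieces over which the inner argument is fed. I expect this limit-stage threading --- rather than the outer $\diamondsuit$ bookkeeping, which mirrors~\ref{d35} and~\ref{d39} with only cosmetic changes --- to be the crux, and its resolution to be precisely what would upgrade the conjecture to a theorem.
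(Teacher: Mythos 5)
The statement you are addressing is explicitly labelled a \emph{conjecture} in the paper; no proof of it is given there, so there is no paper argument for your sketch to match or diverge from. Your write-up is, as you yourself acknowledge, a plan rather than a proof, and it does correctly locate the genuine obstacle: in Main Claim \ref{d31} the task set $\bold T$ is countable and the inner chain of conditions $\bar\varphi^n\in\bbP$ has length $\omega$, so no limit step in $\bbP$ ever arises, whereas once $|M_{\bold a_\delta}|=\lambda$ the inner construction must have length $\lambda$ and a descending chain of formulas of limit length need not have a lower bound in $\bbP$ (an intersection of $\lambda$-many definable convex sets is not definable by a single formula). Proposing to thread the inner chain along a $\square_\lambda$-scale and ``glue'' at limit $\zeta<\lambda$ is a reasonable instinct, but you never say what the glued condition actually is, why it is again a pair of formulas satisfying $\boxplus_2(a)(\alpha)$--$(\delta)$, or why its $\xi$-measure is not degraded; that is precisely the step you defer as ``the crux,'' and deferring it leaves the argument a heuristic rather than a proof, which is the honest state of affairs for a conjecture.

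There is in addition a concrete gap in your reduction of part (2) to part (1). Part (2) carries no set-theoretic hypotheses, so its intended reading is a ZFC assertion; but a $\lambda>|M|$ that is strong limit singular of cofinality $\aleph_0$ and simultaneously satisfies $\square_\lambda$ and $\diamondsuit_{S^{\lambda^+}_{\aleph_0}}$ need not exist --- $\square_\lambda$ fails for every singular $\lambda$ above a supercompact cardinal, for instance. So even a complete proof of (1) would not yield (2) by the route you propose. What you have actually observed is that (1) implies a \emph{hypothesis-laden} weakening of (2), namely that under the assumptions of (1) the $\lambda$-universal model $N$ witnesses an elementary extension of any $M$ of size $\le\lambda$ that is $3$-o.r.; this is correct but is not part (2), which should be treated as an independent (and, on its face, harder) conjecture.
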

\newpage

\section {Weaker version of PA}

We may wonder what is the weakest version of PA needed in the various
results so below we define some variants and then remark when they
suffice.  But say when we add the function $2^x$, we prefer to add to
the vocabulary a new function symbol and the relevant axioms (rather
than an axiom stating that some definition of it has those
properties).  So we shall comment what version of PA is needed in the
results of \S1,\S2.

\begin{convention}
\label{k0}
A model is a model of PA$_{-4}$ (see below) of vocabulary
$\tau_{\PA}$ if not said otherwise.
\end{convention}

\begin{definition}
\label{k3}
We define the first order theories PA$_\ell$ for 
$\ell \in\{-1,\dotsc,-4\}$ and let PA$^{\com}_\ell$ 
be the set of completions of PA$_\ell$.

Let PA$_\ell$ consist of the following first order sentences 
in the vocabulary $\{0,1,<,+,\times\}$ of $\bbN$:
\mn
\begin{enumerate}
\item[$(a)$]   for $\ell \le 4$, the obvious axioms of addition and
product and order, that is axioms describing the non-negative parts of
discrete ordered rings,
\sn
\item[$(b)$]   if $\ell \le 3$ we also add division with remainder
by any $n \in \bbN$,
\sn
\item[$(c)$]   if $\ell \le 2$ also add division with remainder,
\sn
\item[$(d)$]   if $\ell \le 1$, we add a unary function $F_2$ written
$2^x$ with the obvious axioms for $x \mapsto 2^x$, including $(\forall
x)(\exists y)(2^y \le x < 2^{y+1})$.
\end{enumerate}
\end{definition}

\begin{claim}
\label{k5}
For $M,N$ are models of {\rm PA}$_{-4}$; we still have 
\ref{a7}(1),(1A),(1B),(3),(4) and \ref{a9} and
\ref{b3}(1),(2),(3),(5) and \ref{a19}(1),(2),(3). 
\end{claim}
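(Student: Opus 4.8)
The plan is to proceed by inspection, tracking the arithmetic content of each cited item. Write PA$_{-4}$ for the weakest theory of \ref{k3}, retaining only the ordered-semiring axioms \ref{k3}(a): in particular $<$ is a discrete linear order with first element $0$, so the successor map and hence $x\mapsto x+n$ for standard $n$ are order-definable (while $x\mapsto n\times x$ is definable from $+$), and $a\le b$ yields a unique $b-a$. No use will be made of division with remainder (not even by a standard integer) nor of $2^x$; this is precisely why \ref{a7}(2), \ref{a19}(4),(5) and \ref{b3}(4) are \emph{absent} from the list: the first two need $\lfloor(na-b)/(n-1)\rfloor$ and $\lceil a/n\rceil$ (division by standard $n$, present only from PA$_{-3}$ on), while \ref{b3}(4) uses the isomorphism $M_{<ab}\cong M_{<a}\times M_{<b}$, i.e.\ division with remainder by a non-standard element.

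First I would re-examine \ref{a7}. Parts (1A),(1B) are pure monotone arithmetic about $\{c:\bigwedge_n c^n<a\}$ and $\{c:\bigwedge_n c\times n<a\}$: downward closure is immediate, closure under products follows by comparing $c^nd^n$ with $\max\{c,d\}^{2n}$, closure under sums is similar, and the ``$\Leftrightarrow$'' uses only compatibility of $<$ with $+,\times$ plus ``non-standard beats standard''. Given (1A),(1B), part (1) follows for $\ell=1,3$ exactly as written, for $\ell=0,2,4$ by the same (easier) chase of witnesses under composition, and for $\ell=5,6$ from the definitions, convexity of $E^6_M$ needing only monotonicity of automorphisms. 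Part (3) is purely order-theoretic once one invokes $c-a$ for $a\le c$, and part (4) merely composes automorphisms. For \ref{a9}, $x\mapsto x+n$ is order-definable for standard $n$, so $E^0_M$ is an order-invariant relation; and an almost $\{<,+\}$-isomorphism preserves $x\mapsto x+n$ and $x\mapsto n\times x$ up to a standard shift, while the defining conditions of $E^1_M,E^2_M$ are insensitive to replacing a witness $c$ by an $E^0_M$-equivalent one, so these relations are carried across.

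Next I would handle \ref{b3} and \ref{a19}. In \ref{b3}(1) each inclusion $E^\ell\subseteq E^{\ell+1}$ comes from a one-line witness: $c:=n$ for $E^0\subseteq E^1$ and for $E^2\subseteq E^3$, and $k:=2$ for $E^1\subseteq E^2$ and for $E^3\subseteq E^4$, using only monotonicity of $+,\times$ and ``non-standard beats standard''. Parts (2),(3),(5) are copied from \S1--\S2: the pasting construction for $\ell=5$ in (2) needs only subtraction by an element below the argument; the $\ell=6$ case uses \ref{a7}(4) and convexity of $E^6_M$ (both on the list); the multiplicative cases of (3) use only the $\times$-axioms and convexity of $E^4_M$; and (5) is immediate from (1). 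Finally, \ref{a19}(1),(2),(3): $\langle a+n\rangle$, $\langle a-n\rangle$ (legitimate since $a$ is non-standard) and $\langle n\times a\rangle$ are visibly monotone, and cofinality/coinitiality in $a/E^0_M$, resp.\ $a/E^2_M$, is just the defining condition of that relation.

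The main obstacle is not a single step but the bookkeeping: at every appeal to ``PA'' in the original arguments one must confirm that only \ref{k3}(a)-level facts are invoked. The two places deserving to be written out are (i) the ``$\Leftrightarrow$'' in \ref{a7}(1A) --- one supposes $b\le c^{n}$ for some standard $n$ and derives a contradiction from $a<bc_0$ together with $c_0^m<a$ for all standard $m$ and $c<c_0$, purely via multiplicativity of $<$ --- and (ii) the preservation of $E^1_M,E^2_M$ under almost $\{<,+\}$-isomorphisms, where the accumulated error (standard, since only standard-length iterations occur) is absorbed by passing to an $E^0_M$-equivalent witness. Both are routine, but spelling them out makes the exact boundary with PA$_{-3}$ explicit.
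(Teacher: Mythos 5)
Your inspection is exactly the paper's (unwritten) argument: Claim \ref{k5} carries no proof precisely because each cited item invokes only the ordered-semiring axioms of \ref{k3}(a), and your accounting of why \ref{a7}(2), \ref{a19}(4),(5), \ref{b3}(4) are excluded matches the theory bounds recorded in Claims \ref{k7} and \ref{k14}. The only small slip is in your sketch of the $\Leftrightarrow$ of \ref{a7}(1A): ``$c<c_0$'' is not given, but the contradiction goes through in either case by bounding $c^m c_0 \le \max\{c,c_0\}^{m+1}$, which is below $a$ whichever of $c,c_0$ is larger.
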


\begin{claim}
\label{k7}
Claim \ref{a7}(2) holds \when \, $M$ is a model of {\rm PA}$_{-3}$.
\end{claim}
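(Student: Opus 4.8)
The plan is to rerun the proof of Claim~\ref{a7}(2) essentially verbatim and pinpoint that the only arithmetic beyond the base theory PA$_{-4}$ of $\{+,\times,<\}$ that it uses is division with remainder by a \emph{standard} integer, which is precisely the schema added in clause~(b) of PA$_{-3}$ (full induction, and division with remainder by arbitrary elements, are never invoked). Recall the shape of that proof. Given $a\,E^2_M\,b$ with $a<b$, let $n\in\bbN$ be least with $b<na$; then $n\ge 2$ and $(n-1)a\le b<na$, so $na-b$ lies in $(0,a]$. One wants an element $c$ with $(n-1)c=na-b$, checks $0<c<a$, fixes a set $X$ of representatives of $E^0_M$ with $a,c\in X$, and defines the ``stretch'' automorphism $f$ of $M\rest\{<\}$ to be the identity on every $E^0_M$-class $\le c/E^0_M$ and $x\mapsto n(x-a)+b$ on the representatives of the higher classes, extended inside each class by the corresponding integer shift. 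The facts needed to see that $f$ is a well-defined order-automorphism with $f(c)=c$, $f(a)=b$ are: convexity of $E^0_M$ (available in PA$_{-4}$ by \ref{k5} together with \ref{a7}(1)); iterated additions and multiplication by standard integers (in PA$_{-4}$); and that $c$ is a genuine quotient, non-standard, with $0<c<a$ --- this last point being what keeps $f$ from ``overshooting'' and hence keeps it surjective.

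First I would handle the reduction to a $b$ for which such a $c$ exists, which is where PA$_{-3}$ enters. Since $E^0_M$ refines $E^2_M$ (\ref{b3}(1), valid in PA$_{-4}$ by \ref{k5}), replacing $b$ by any $b'\,E^0_M\,b$ preserves $a\,E^2_M\,b$; and since $E^0_M\subseteq E^5_M$ --- which holds by a direct one-block argument: shift the $\bbZ$-block $a/E^0_M$ by a standard integer and leave everything else fixed, a map that is order-preserving because the gaps to neighbouring blocks are non-standard --- it suffices to produce an automorphism sending $a$ to a suitably chosen such $b'$. Dividing $na-b$ by the standard integer $n-1$ with remainder, I may, when $na-b$ is non-standard, replace $b$ by $b+s$ so that $na-(b+s)=(n-1)q$ with $q$ non-standard and $0<q<a$, and take $c:=q$; when $na-b$ happens to be standard (so $b$ is $E^0_M$-close to $na$), I instead replace $b$ by $b'=na+t$ with $t\in\{1,\dots,n\}$ chosen so that $t\equiv a\pmod n$, whence $n'=n+1$ is least with $b'<n'a$, $n'a-b'=a-t$ is divisible by the standard integer $n$, and $c:=(a-t)/n$ is non-standard with $0<c<a$. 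Both divisions are instances of the PA$_{-3}$ schema, and nothing stronger is used here.

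The remaining verification is bookkeeping, which I would only sketch: $f(c)=c$ and $f(a)=b'$ follow from the defining equation $(n-1)c=na-b'$; $f$ is order-preserving because inside an $E^0_M$-class it is an integer shift, while across two distinct classes the image gap $n(x_2-x_1)$ is non-standard and so dominates any finite correction; and $f$ is surjective because for a target $z$ above $c/E^0_M$ one solves $nx=z+na-b'$ up to a standard remainder $r<n$ --- division with remainder by the standard $n$ once more --- to land in the same $E^0_M$-class as $z$. Restricting $f$ to $M_{<a}$ gives an order-isomorphism onto $M_{<b'}$; composing with the block shift carrying $b'$ back to $b$ yields $a\,E^5_M\,b$, hence also $(M_{<a},<_M)\cong(M_{<b},<_M)$, and the case $a\,E^0_M\,b$ (in particular $a=b$) is absorbed by the $E^0_M\subseteq E^5_M$ remark. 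The one place that genuinely needs attention --- and the only content not already present in the PA proof --- is checking that these two appeals to division-with-remainder by a standard integer are indeed all that is required, i.e.\ that no use of full induction or of division by a non-standard element has crept in.
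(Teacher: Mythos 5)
Your proof is correct and takes essentially the same approach as the paper: both proofs observe that the only use of arithmetic in \ref{a7}(2) beyond the base theory is producing the quotient $c$ with $(n-1)c$ close to $na-b$, and both obtain it by a single application of division-with-remainder by the standard integer $n-1$, which is exactly what PA$_{-3}$ provides (the paper divides $b-a$ by $n-1$ and sets $c_2=a-c_1$; you divide $na-b$, which carries the same information). Your write-up is somewhat more explicit than the paper's terse sketch --- in particular you spell out the $E^0_M\subseteq E^5_M\cap E^2_M$ reduction behind the paper's ``without loss of generality'' and you separately handle the degenerate case where $na-b$ is standard by bumping $b$ above $na$ and working with $n+1$ --- but the core idea is identical, so this counts as the same proof with more detail rather than a different route.
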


\begin{PROOF}{\ref{k7}}  
The only difference is why can we choose $c_1,c_2$ there?

Now if we assume
$M \models$ PA this is obvious, but we are assuming $M \models$ PA$_{-3}$,
still we can divide $b-a$ by $n-1$ and then get $c_1$ and $m < n-1$ such
that $b-a = (n-1) \times c_2 + m$.  Let $c_2 = a-c_1$ so $b=a + (n-1)
\times c_2 + m = c_1 + n \times c_2 + m$.  We still have to justify
using $a-c_2$, i.e. showing $c_2 \le a$, but otherwise $b-a = (n-1)
\times c_2 + m \ge (n-1) \times a +m$, i.e. $b \ge n \times a +m$,
contradiction. 
\end{PROOF}

\begin{theorem}
\label{k13}
If $M_1,M_2$ are models of {\rm PA}$_{-1}$ then \ref{a13} holds,
i.e. if $M_2$ is 2-order-rigid and $M_1,M_2$ are order-isomorphic
\then \, $M_1,M_2$ are almost $\{<,+\}$-isomorphic.
\end{theorem}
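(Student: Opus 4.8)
The plan is to re-run the proof of Theorem~\ref{a13}, with the roles of $M_1$ and $M_2$ interchanged (the statement of \ref{k13} takes $M_2$ to be the $2$-o.r.\ model, but this relabelling is immaterial), checking that every appeal to ``$M_\ell\models\PA$'' in that proof is in fact an appeal to something available already in $\PA_{-1}$. Inspecting the proof of \ref{a13}, the inputs beyond pure $\{<,+\}$-manipulation are: Claim~\ref{a7}(1),(2),(3) and Observation~\ref{a19}; the translation isomorphism $[0,a)_{M_\ell}\cong[b,a+b)_{M_\ell}$ and the lexicographic decomposition $(M_{<c},<_M)\cong(M_{<a},<_M)\times(M_{<b},<_M)$ whenever $c=a\times_{M_\ell}b$; division with remainder of $b$ by $a$; and the exponential function $x\mapsto 2^x$ together with $2^{x+y}=2^x\cdot 2^y$ and $(\forall x)(\exists y)(2^y\le x<2^{y+1})$, used in $(*)_7$, $(*)_8$ and $\circledast_1$--$\circledast_2$. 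Now \ref{a7}(1),(3) and \ref{a19}(1)--(3) hold in $\PA_{-4}$ by Claim~\ref{k5}; \ref{a7}(2) holds in $\PA_{-3}$ by Claim~\ref{k7}; division with remainder, hence also \ref{a19}(4) and the lexicographic decomposition, hold in $\PA_{-2}$; and the exponential ingredient, which in particular yields \ref{a19}(5) and the facts about $X_\ell$, is exactly clause~(d) of Definition~\ref{k3}, available precisely in $\PA_{-1}$.

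Concretely, I would first reduce to the common order $M=M_1\rest\{<\}=M_2\rest\{<\}$ and record $(*)_0$ (convexity of the $E^2_{M_\ell}$-classes, from \ref{a7}(1)), exactly as in \ref{a13}. Then $(*)_3$--$(*)_6$ go through unchanged: in $(*)_3$ and $(*)_5$ the only arithmetic used is the translation isomorphism (addition axioms only) and the lexicographic decomposition (division with remainder, hence legitimate in $\PA_{-2}$), after which ``$M_2$ is $2$-o.r.''\ is applied verbatim; $(*)_4$ uses only \ref{a7}(2),(3); and the second half of $(*)_6$ additionally divides $b$ by $a$ in $M_2$, again legitimate in $\PA_{-2}$. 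Next, $(*)_7$--$\circledast_6$ transcribe directly once one reads $2^{(-)}$ as the function symbol $F_2$ of $\PA_{-1}$: $f_\ell(a)=(2^a)^{M_\ell}$ and the transported models $M^*_\ell$ of $\circledast_1$ are literally definable, $\circledast_2$ is the identity $2^{x+y}=2^x 2^y$, the cofinality and unboundedness claims of $(*)_7$, $(*)_8$ and of \ref{a19}(4),(5) come from division with remainder and from $(\forall x)(\exists y)(2^y\le x<2^{y+1})$, and $\circledast_3$--$\circledast_6$ are basic arithmetic inside $M^*_\ell$ just as before. The desired almost $\{<,+\}$-isomorphism is then assembled from $\circledast_4+\circledast_6(b)$ exactly as in \ref{a13}.

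The main point to watch — and the reason the hypothesis is $\PA_{-1}$ and not the weaker $\PA_{-2}$ — is the exponential device at the heart of \ref{a13}: the whole argument passes from $(M,<,\times)$ to $(M^*_\ell,<,+)$ via $x\mapsto 2^x$, so that the ``$E^2$-via-$\times$'' data turns into ``$E^0$-via-$+$'' data on the definable set $X_\ell=\rng(f_\ell)$, and the matching $h\colon X_1\to X_2$ of $\circledast_4$ is built from the fact that $X_\ell$ meets each $E^2_{M_\ell}$-class in a copy of $\bbZ$ unbounded from above and below. That last fact is precisely what the axiom $(\forall x)(\exists y)(2^y\le x<2^{y+1})$ (clause~(d) of Definition~\ref{k3}) supplies, and without it the comparison of the additive structures cannot be set up. Everything else being a routine rereading of \ref{a13} through Claims~\ref{k5} and~\ref{k7}, I would in fact present the proof simply as: the proof of \ref{a13} applies, its uses of \ref{a7} and \ref{a19} being covered by \ref{k5} and \ref{k7}, and the two places where ``$M_\ell\models\PA$'' is literally invoked — the lexicographic splitting of $[0,a\times_{M_\ell}b)$ and the solvability of $2^y\le x<2^{y+1}$ — holding in $\PA_{-2}$ and in $\PA_{-1}$ respectively; followed by a one-line remark pointing to those two spots in the text of \ref{a13}.
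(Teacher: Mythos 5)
Your proposal matches the paper's own proof exactly: the paper establishes \ref{k13} by rereading the proof of \ref{a13} and annotating which of $\PA_{-4}$, $\PA_{-2}$, $\PA_{-1}$ is invoked at each of the steps $(*)_3,(*)_5,(*)_6,(*)_7,(*)_8,\circledast_2$, which is precisely your rereading-through-\ref{k5} and \ref{k7} strategy, with the exponential axiom of clause~(d) of Definition~\ref{k3} singled out as the reason $\PA_{-1}$ is the right hypothesis. (The only tiny cosmetic discrepancy is that the paper assigns $(*)_7$ to $\PA_{-4}$ alone whereas you lump it with the division and exponential steps, but since $\PA_{-1}$ implies all of these the conclusion is unaffected.)
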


\begin{PROOF}{\ref{k13}}
As in \ref{a13} with the following minor additions:
\mn
\begin{enumerate}
\item[$\bullet$]  in the proof of $(*)_3$ we use $M_2 \models \PA_{-4}$,
\sn
\item[$\bullet$]  in the proof of $(*)_5$ we use $M_\ell \models 
\text{ PA}_{-2}$,
\sn
\item[$\bullet$]  in the proof of $(*)_0$ we use $M_2 \models \PA_{-2}$,
\sn
\item[$\bullet$]  in the proof of $(*)_7$ we use $M_1 \models \PA_{-4}$,
\sn
\item[$\bullet$]  in the proof of $(*)_8,\circledast_2$ we
use $M_\ell \models \PA_{-1}$.
\end{enumerate}
\end{PROOF}

\begin{claim}
\label{k14}
1) If $M \models \PA_{-3}$ then \ref{a19}(4) holds.

\noindent
2) If $M \models \PA_{-1}$ \then \, \ref{a19}(5) holds.
\end{claim}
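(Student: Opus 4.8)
The plan is to handle the two clauses separately, in each case isolating precisely which axiom beyond $\PA_{-4}$ is needed; recall that \ref{k5} already yields \ref{a19}(1)--(3) over $\PA_{-4}$, so only the two ``moreover'' clauses remain.

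For part (1), the point not available in $\PA_{-4}$ is merely that $b_n := \min\{b : n\times b \ge a\}$ denotes an element of $M$. This is exactly where division of $a$ by the standard integer $n$ --- the content of $\PA_{-3}$ over $\PA_{-4}$ --- is used: writing $a = n\times q + r$ with $r < n$ gives $n\times q \le a < n\times(q+1)$, so $b_n$ is $q$ when $r=0$ and $q+1$ otherwise, a definable element with $n\times b_n \ge a > n\times(b_n-1)$ (the latter once $b_n\ge 1$). Everything after this is arithmetic valid already in $\PA_{-4}$: (i) $b_n\notin\bbN$ since $a\notin\bbN$ and $n$ is standard; (ii) $b_n E^2_M a$, because $n\times a\ge a$ and minimality give $b_n\le a < a\times 2$, while $a\le n\times b_n$ gives $a < b_n\times(n+1)$, so $n+1$ is a witness; (iii) the sequence strictly decreases, since $(n+1)\times b_n = n\times b_n + b_n > a$ gives $b_{n+1}\le b_n$, and as $b_n$ is non-standard we have $b_n > n+1$, whence $(n+1)\times(b_n-1) = (n+1)\times b_n - (n+1) \ge a + b_n - (n+1) > a$, forcing $b_{n+1}\le b_n - 1$; (iv) the sequence is unbounded from below inside $a/E^2_M$, since given $c$ with $c E^2_M a$ (so $c\notin\bbN$) and a standard $m$ with $a < c\times m$, setting $n := 2m$ and using $a > 2m$ one gets $2m\times(c-1) = 2mc - 2m > 2a - 2m > a$, hence $b_{2m}\le c-1 < c$. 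This establishes \ref{a19}(4) over $\PA_{-3}$.

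For part (2), the existence of $b$ with $2^b \le a < 2^{b+1}$ is literally the extra axiom $(\forall x)(\exists y)(2^y \le x < 2^{y+1})$ of $\PA_{-1}$, and $b\notin\bbN$, as otherwise $a < 2^{b+1}$ would be standard; so $2^{b-n}$ is defined for every standard $n$. Then $\langle 2^{b+n}:n\in\bbN\rangle$ is increasing because $2^{b+n+1} = 2\times 2^{b+n} > 2^{b+n}$, and it is cofinal in $a/E^2_M$: for $n\ge 1$ one has $a < 2^{b+1}\le 2^{b+n}$ and $2^{b+n} = 2^n\times 2^b\le 2^n\times a$, so $2^{b+n} E^2_M a$; and if $c E^2_M a$, pick a standard $m$ with $c < a\times m$, so $c < a\times 2^m < 2^{b+1}\times 2^m = 2^{b+m+1}$. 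Symmetrically $\langle 2^{b-n}:n\in\bbN\rangle$ is decreasing, lies in $a/E^2_M$ (from $2^{b-n}\le 2^b\le a$ and $a < 2^{b+1} = 2^{n+1}\times 2^{b-n}$), and is unbounded from below there: if $c E^2_M a$, pick a standard $m$ with $a < c\times m$, so $2^b\le a < c\times m\le c\times 2^m$ and hence $2^{b-m} < c$. Thus $\langle 2^{b+n}\rangle$ and $\langle 2^{b-n}\rangle$ are cofinal and coinitial, respectively, in $a/E^2_M$, which is precisely the assertion that ``we can use them in (3),(4)''; this gives \ref{a19}(5) over $\PA_{-1}$.

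All computations here are elementary, and, as usual in this section, the only genuinely load-bearing step is the opening of part (1): giving meaning to $\min\{b : n\times b\ge a\}$ requires exactly division of $a$ by a fixed standard $n$, which is what separates $\PA_{-3}$ from $\PA_{-4}$; in part (2) the new axiom for $2^x$ supplies the analogue directly, so beyond bookkeeping of the $E^2_M$-witnesses I expect no real obstacle.
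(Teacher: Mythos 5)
The paper states Claim \ref{k14} without supplying a proof (the section ends immediately after the claim), so there is no authorial argument to compare against. Your proof is correct and, as far as I can tell, is exactly what the claim intends: it isolates cleanly which new axiom is load-bearing in each part. In part (1) the only obstacle over $\PA_{-4}$ is that $\min\{b:n\times b\ge a\}$ denotes an element, and division of $a$ by the \emph{standard} $n$ (the content of $\PA_{-3}$) resolves this by producing $q,r$ with $a=n\times q+r$, $r<n$, so the minimum is $q$ or $q+1$; the arithmetic for non-standardness, membership in $a/E^2_M$, strict decrease, and coinitiality all check out. In part (2) the axiom $(\forall x)(\exists y)(2^y\le x<2^{y+1})$ supplies $b$ directly, $b\notin\bbN$ since otherwise $a<2^{b+1}$ would be standard, and the verifications of monotonicity, cofinality, and coinitiality of $\langle 2^{b+n}\rangle,\langle 2^{b-n}\rangle$ in $a/E^2_M$ using the identity $2^{x+y}=2^x\cdot 2^y$ are all sound. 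The only small stylistic remark is that one could streamline part (1)(iv) slightly (any standard $n\ge m+1$ already gives $b_n\le c-1$ once one notes $n(c-1)=nc-n\ge (m+1)c-n>a+c-n>a$ using $c$ non-standard), but your choice $n=2m$ works just as well. No gaps.
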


\begin{PROOF}{\ref{k14}}
Straightforward.
\end{PROOF}

\noindent
We may wonder (see Definition \ref{a6}(2)).
\begin{question}
\label{a11}  Is there a 2-o.r. model of PA$_{-1}$?
\end{question}
\bigskip\bigskip


\begin{thebibliography}{}

\bibitem[GT06]{GbTl05}
R\"udiger G\"obel and Jan Trlifaj, \emph{{Approximations and endomorphism
  algebras of modules}}, de Gruyter Expositions in Mathematics, vol.~41, Walter
  de Gruyter, Berlin, 2006.

\bibitem[KS06]{KoSc06}
R.~Kossak and J.~Schmerl, \emph{{The structure of models of Peano arithmetic}},
  Oxford University Press, 2006.

\bibitem[Sh:384]{Sh:384}
Saharon Shelah, \emph{{Compact logics in ZFC : Complete embeddings of atomless
  Boolean rings}}, {Non structure theory, Ch X}.

\bibitem[Sh:757]{Sh:757}
\bysame, \emph{{Quite Complete Real Closed Fields}}, Israel Journal of
  Mathematics \textbf{142} (2004), 261--272, math.LO/0112212.

\end{thebibliography}

\end{document}